\numberwithin{equation}{section}    % Gleichungen abschnittsweise durchnummerieren
\numberwithin{equation}{section}
\newtheorem{satz}{Satz}[section]
\newtheorem{prop}[satz]{Proposition}
\newtheorem{theorem}[satz]{Theorem}
\newtheorem{proposition}[satz]{Proposition}
\newtheorem{corollary}[satz]{Corollary}
\newtheorem{lemma}[satz]{Lemma}
\theoremstyle{definition}
\newtheorem{definition}[satz]{Definition}
\newtheorem{remark}[satz]{Remark}
\newtheorem{example}[satz]{Example}
\newtheorem{assumption}{Assumption}
\DeclareMathOperator{\E}{{\mathbb E}}
\DeclareMathOperator{\F}{{\mathcal F}}
\DeclareMathOperator{\R}{{\mathbb R}}
\DeclareMathOperator{\N}{{\mathbb N}}
\DeclareMathOperator{\supp}{supp}
\DeclareMathOperator{\ran}{ran}
\DeclareMathOperator{\lin}{lin}
\DeclareMathOperator{\dom}{dom}
\DeclareMathOperator{\Id}{Id}
\renewcommand{\d}{\ensuremath {\,\mathrm{d}}}
\providecommand{\eps}{\varepsilon}
\providecommand{\ind}{\mathbbm{1}}
\providecommand{\scapro}[2]{\langle #1,#2 \rangle}
\renewcommand{\phi}{\varphi}
\renewcommand{\theta}{\vartheta}
\renewcommand{\subset}{\subseteq}
\renewcommand{\supset}{\supseteq}
\renewcommand{\cdot}{{\scriptstyle \bullet} }
\renewcommand{\le}{\leqslant}
\renewcommand{\ge}{\geqslant}
\renewcommand{\hat}{\widehat}
\renewcommand{\tilde}{\widetilde}
\title{Information bounds for inverse problems with application to deconvolution and L\'evy models}
\author{Mathias Trabs\footnote{E-mail address: trabs@math.hu-berlin.de}~\footnote{The author thanks Markus Rei{\ss} for many comments and stimulating questions and Nicolas Perkowski, Richard Nickl and Jakob S\"ohl for helpful remarks. This research was partially supported by the Deutsche Forschungsgemeinschaft through the FOR 1735 ``Structural Inference in Statistics''.}}
\date{Humboldt-Universit\"at zu Berlin}
\begin{document}

\maketitle

\begin{abstract}
If a functional in a nonparametric inverse problem can be estimated with parametric rate, then the minimax rate gives no information about the ill-posedness of the problem. To have a more precise lower bound, we study semiparametric efficiency in the sense of H\'ajek--Le Cam for functional estimation in regular indirect models. These are characterized as models that can be locally approximated by a linear white noise model that is described by the generalized score operator. 
A convolution theorem for regular indirect models is proved. This applies to a large class of statistical inverse problems, which is illustrated for the prototypical white noise and deconvolution model. It is especially useful for nonlinear models. We discuss in detail a nonlinear model of deconvolution type where a L\'evy process is observed at low frequency, concluding an information bound for the estimation of linear functionals of the jump measure. 
\end{abstract}

%The strong relationship between the L\'evy and the deconvolution model is given a precise meaning. Of particular interest are the information bounds for the generalized distribution function of the jump measure, which coincide with recent results on the asymptotic variance of a corresponding estimator.

\noindent
\textbf{Keywords:} Convolution theorem~$\cdot$ Deconvolution~$\cdot$ %Generalized score operator~$\cdot$ 
L\'evy process~$\cdot$ Nonlinear ill-posed inverse problem~$\cdot$ 
Semiparametric efficiency~$\cdot$ White noise model \\
\\
\textbf{MSC (2000):} 60G51 $\cdot$ 60J75 $\cdot$ 62B15 $\cdot$ 62G20 $\cdot$ 62M05 \\

\section{Introduction}
Inverse problems are a key topic in applied mathematics, in particular models with noise in the data. Typically the parameter which is the target of the statistical inference is not directly observable, but ``hidden'' by some operator. While upper bounds, like convergence rates for nonparametric inverse problems, are mainly properties of the estimators, lower bounds reveal the deeper information theoretic structure. 
%The well studied minimax convergence rates capture the ill-posedness of a nonparametric inverse problem as long as the rate is slower than the parametric rate. 
Instead of the (infinite dimensional) parameter itself, derived quantities are often the final object of interest. On the one hand, they might allow for inference with parametric rate, circumventing typical problems in nonparametric estimation like the choice of the bandwidth, cf. estimating the distribution function instead of the density. In this case minimax convergence rates give no information about the ill-posedness of the problem and we need the much more precise information bounds. On the other hand, many nonparametric statistical procedures rely on basis expansions and model selection strategies, see e.g. \citet{cavalierEtAl2002}. For these adaptive methods it is strictly necessary to assess the quality of the estimated coefficient in terms of confidence.

While inverse problems appear in many different shapes in the literature, information bounds are studied only in a few linear cases: \citet{klaassenEtAl2001, klaassenEtAl2005} and \citet{khoujmaneEtAl2007} considered the linear indirect regression model
\[
  Y_i= (K\theta)(X_i)+\xi_i,\quad i=1,\dots,n,
\]
where the regression function depends on the unknown parameter $\theta$ via the linear operator $K$ and $(X_1,Y_1),\dots,(X_n,Y_n)$ are observed in presence of the additional errors $\xi_i$. \citet{vanRooijEtAll1999} derived a convolution theorem for linear indirect density estimation, where a sample of i.i.d. random variables $Y_1,\dots,Y_n$ with distribution $K\theta$ is observed. If more specifically 
\[
  Y_i=X_i+\eps_i,\quad i=1,\dots,n,      
\]
where $X_i$ has law $\theta$ and is corrupted by the noise variable $\eps_i$, $K$ is a convolution operator. Efficiency for this so called deconvolution model was considered by \citet{soehlTrabs2012}. 

Using the polar decomposition or specific properties of the operators, all these studies are restricted to linear models. However, in many situations the operator $K$ might not be linear, see e.g. \citet{englEtAl1996} and \citet{bissantzEtAl2004}. Hence, new mathematical methods are necessary. The aim of the present paper is twofold: (a) to provide a convolution theorem for general inverse problems that are regular in a well specified sense and (b) to study concrete and prototypical examples of linear and nonlinear structure. 

A canonical probabilistic and nonlinear inverse problem is the following. Let $Y_i$ be compound Poisson distributed
\[
  Y_i\sim e^{-\lambda}\sum_{k=0}^\infty\frac{\lambda^k}{k!}\theta^{\ast k}
\]
with intensity $\lambda>0$ and jump distribution $\theta$, writing $\theta^{\ast k}$ for the $k$-fold convolution of the measure $\theta$. The distribution of $Y_i$ is a convolution exponential and therefore not linear in $\theta$. If $Y_i$ is more generally an increment of a L\'evy process $(L_t)_{t\ge0}$, inference on the characteristic triplet of the L\'evy process is a nonlinear problem since the dependence of the probability distribution of the marginals on the L\'evy triplet is determined by the characteristic exponent, see the review by \citet{reiss2013}. At the same time this model is of practical importance since L\'evy processes are the main building blocks for mathematical modeling of stochastic processes. In the related context of diffusion processes, efficient estimation was recently studied by \citet{clementEtAl2013}.
%While the white noise model has also numerical interpretation, the L\'evy model is pure probabilistic. The statistical uncertainty is due to observing the processes only at discrete time points. , for instance, see \cite{neumannReiss2009} and \cite{nicklReiss2012} for convergence rates and asymptotic normality, respectively. 

In view of the equivalence results by \citet{brownLow1996} and \citet{nussbaum1996}, the prototype of an inverse problem is to estimate $\theta\in\Theta\subset X$, or derived parameters, from observations $y_{\eps,\theta}$ in the white noise model
\begin{equation}\label{eqWhiteNoise}
  y_{\eps,\theta}=K(\theta)+\eps\dot W\quad\text{for a continuous operator }K:X\supset \Theta\to Y,
\end{equation}
where $X$ and $Y$ are Hilbert spaces and $\eps\dot W$ denotes white noise on $Y$ with noise level $\eps>0$. For a review of estimation results in this model we refer to \citet{cavalier2008} and references therein. Studying minimax convergence rates when $K$ is linear, \citet{goldenshlugerPereverzev2000, goldenshlugerPereverzev2003} have shown that the parametric rate $\eps$ can be achieved for linear functionals of $\theta$ whose smoothness is not smaller than the ill-posedness of the operator $K$. 

Inspired by the results by \citet{vanderVaart1991}, we restate the classical local asymptotic normality (LAN) theory in a way that is appropriate to capture the inverse structure of the above mentioned models. Here, the linear white noise model \eqref{eqWhiteNoise} serves as the local limit experiment in the sense of \citet{leCam1972}. This leads to the notion of \textit{regular indirect models}, meaning that the white noise model is the locally linear weak approximation of the statistical experiment. The asymptotic linear structure is described by the so called \textit{generalized score operator}. We derive a version of the H\'ajek--Le Cam convolution theorem for the estimation of derived parameters for regular inverse problems. The tangent set is determined by the range of the generalized score operator and the efficient influence function is given by the Moore--Penrose pseudoinverse of the adjoint score operator. Although we focus on linear functionals in the 
examples, the theory applies to any parameter which is differentiable in a pathwise sense. 

We show that the white noise model with a (possibly) nonlinear operator, the deconvolution model as well as the L\'evy model are regular indirect models and thus the convolution theorem applies. In many cases estimators are known that have the optimal limit distribution and consequently the information bound is sharp. The analysis of L\'evy processes is most challenging and the second half of this article is devoted to this model. Here, the proofs rely on estimates of the distance of infinitely divisible distributions by \citet{Liese1987} and the Fourier multiplier approach which was introduced by \citet{nicklReiss2012}.

We will put some stress on the L\'evy model for three reasons: First, it is an important paradigm for nonlinear problems in indirect density estimation. Second, to understand from an efficiency perspective the \textit{auto-deconvolution} structure of the L\'evy model which was first reported by \citet{belomestnyReiss2006}. Third, to answer a conjecture by \citet{nicklReiss2012}. Based on low frequency observations of a L\'evy process, they have constructed an estimator for the (generalized) distribution function of the jump measure $\nu$ and proved asymptotic normality when the parametric rate can be attained. 
The natural question is whether this estimator is efficient in the H\'ajek--Le Cam sense. Since \citet{buchmannGrubel2003} have constructed for a finite and known jump activity a decompounding estimator with smaller asymptotic variance, an information bound is of particular interest. 
%This problem was heuristically considered by \cite{nicklReiss2012} by determining the Cram\'er--Rao bound in the least favorable submodel. This strategy however appears not to be applicable rigorously without very strong assumptions. 
With the general convolution theorem at hand, we can prove that both estimators are indeed efficient and thus prior knowledge of the jump intensity simplifies the statistical problem significantly. Concerning the information bound in the deconvolution setting, we can relax the assumptions on the functionals and the admissible error densities by \citet{vanRooijEtAll1999} and the assumptions on the smoothness and decay behavior of the densities of $X_i$ and $\eps_i$ by \citet{soehlTrabs2012} substantially. In fact, our abstract approach leads to natural assumptions in the explicit models.

This paper is organized as follows: Starting with the linear white noise model, we develop our general results in Section~\ref{secLAN}. These are illustrated in the deconvolution setup in Section~\ref{secDecon}. The theory will be applied to the L\'evy model in Section~\ref{secLevy}. While the previous sections are restricted to $\R^d$-valued functionals, we discuss the extension to general derived parameters in Section~\ref{secFunctionals}. More technical proofs are postponed to Section~\ref{secProofs}.

\section{Regular indirect models}\label{secLAN}
\subsection{Linear white noise model}
To understand the probabilistic structure of general inverse problems, we start with studying the abstract linear white noise model \eqref{eqWhiteNoise}, where $X$ and $Y$ are separable real Hilbert spaces with scalar products $\scapro\cdot\cdot_X$ and $\scapro\cdot\cdot_Y$, respectively, and $K:X\to Y$ is a linear and bounded operator. To avoid identifiability problems, we additionally assume that $K$ is injective. That is we observe for some unknown $\theta\in X$
\[
  \scapro {y_{\eps,\theta}}\phi_Y=\scapro{K\theta}{\phi}_Y+\eps\dot W(\phi)\quad\text{for all}\quad \phi\in Y
\]
where $(\dot W(\phi))_{\phi\in Y}$ is an iso-normal Gaussian process with mean zero and covariance structure $\E[\dot W(\phi_1)\dot W(\phi_2)]=\scapro{\phi_1}{\phi_2}_Y$ for $\phi_1,\phi_2\in Y$. The law $\mu$ of the white noise $\dot W$ is defined as symmetric (zero mean) Gaussian measure on $(E,\mathscr B(E))$ for a separable Banach space $E$ in which $Y$ can be continuously embedded and where $\mathscr B(E)$ denotes the Borel $\sigma$-algebra on $E$. In other words $\dot W$ is an isometry from $Y$ into $L^2(E,\mathscr B(E),\mu)$. For the construction of the so called abstract Wiener space we refer to \citet[Thm. 4.1, Lem. 4.7]{kuo1975}. We denote the law of $y_{\eps,\theta}$ by $P_{\eps,\theta}$.

Basically, the linear white noise model is a Gaussian shift experiment where the parameter is hidden behind the operator $K$. The inverse problem is to estimate a derived parameter $\chi(\theta)$ from the observation $y_{\eps,\theta}$ when $\eps\to0$. First, let us focus on a linear functional $\chi(\theta)=\scapro\zeta\theta_X$ for some $\zeta\in X$. Typically, $K$ is injective but admits no continuous inverse, leading to an ill-posed problem, cf. \citet{goldenshlugerPereverzev2000, goldenshlugerPereverzev2003} or \citet{cavalier2008} for a recent review of nonparametric estimation.

Following the classical semiparametric approach, we study parametric submodels by perturbing the parameter $\theta$ in directions $b\in X$. For any $b\in X$ we consider the submodel $t\mapsto P_{\eps,\theta_t}$ generated by the path $[0,1)\ni t\to\theta_t:=\theta+tb$. The behavior of the submodel along this path is described by the following lemma.
\begin{lemma}\label{lemDensity}
  Let $P_{\eps,x}$ denote the law of $y_{\eps,x}=K(x)+\eps\dot W$ on $(E,\mathscr B(E))$ for $x\in X$, and an operator $K:X\to Y$ with $K(0)=0$, then for all $\theta\in X$
  \[
    \frac{\d P_{\eps,x}}{\d P_{\eps,\theta}}(y_{\eps,\theta})=\exp\Big(\dot W\Big(\frac{K(x)-K(\theta)}{\eps}\Big)-\frac{1}{2\eps^2}\|K(x)-K(\theta)\|_Y^2\Big)\quad P_{\eps,\theta}-a.s.
  \]
\end{lemma}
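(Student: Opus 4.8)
The plan is to compute the Radon--Nikodym derivative by reducing to the classical Cameron--Martin formula for the Gaussian shift on the abstract Wiener space. Observe first that for fixed $x$ the observation $y_{\eps,x} = K(x) + \eps\dot W$ is a translate of the centered white noise scaled by $\eps$: under $P_{\eps,x}$, the law of $y_{\eps,x}$ is the pushforward of $\mu$ under $z \mapsto K(x) + \eps z$. Both $P_{\eps,x}$ and $P_{\eps,\theta}$ are therefore translates of one and the same measure (the law of $\eps\dot W$, i.e.\ the dilation of $\mu$ by $\eps$) by the respective shifts $K(x)$ and $K(\theta)$, which differ by the element $h := K(x) - K(\theta) \in Y$.

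The key step is that $h$ lies in the Cameron--Martin space of the relevant Gaussian measure. Indeed $Y$ is precisely the reproducing-kernel Hilbert space associated with $\dot W$ (by construction $\dot W$ is an isometry from $Y$ into $L^2(E,\mathscr B(E),\mu)$, which is exactly the characterization of the Cameron--Martin space), and the dilation by $\eps$ scales the Cameron--Martin inner product by $\eps^{-2}$. Hence the Cameron--Martin--Maruyama theorem (see, e.g., \citet[Ch.~4]{kuo1975}) applies and gives that $P_{\eps,x} \ll P_{\eps,\theta}$ with
\[
  \frac{\d P_{\eps,x}}{\d P_{\eps,\theta}}(y) = \exp\Big( \ell_{h/\eps}(y - K(\theta)) - \tfrac{1}{2\eps^2}\|h\|_Y^2 \Big),
\]
where $\ell_{h/\eps}$ is the Paley--Wiener integral (the $L^2(\mu)$-element corresponding to $h/\eps$). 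Evaluating at $y = y_{\eps,\theta}$ and noting that, under $P_{\eps,\theta}$, one has $y_{\eps,\theta} - K(\theta) = \eps\dot W$ and $\ell_{h/\eps}(\eps\dot W) = \dot W(h/\eps) = \dot W\big((K(x)-K(\theta))/\eps\big)$ almost surely, yields the claimed formula.

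The last routine point to check is that $\dot W\big((K(x)-K(\theta))/\eps\big)$ is well defined and has the Gaussian law $N(0, \eps^{-2}\|K(x)-K(\theta)\|_Y^2)$ under $P_{\eps,\theta}$, which is immediate from the iso-normal property $\E[\dot W(\phi_1)\dot W(\phi_2)] = \scapro{\phi_1}{\phi_2}_Y$; this also confirms that the right-hand side integrates to $1$, as a Radon--Nikodym derivative must.

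The main obstacle is purely bookkeeping: one must be careful that the version of the Cameron--Martin theorem invoked is stated for the dilated measure (the law of $\eps\dot W$) rather than for $\mu$ itself, so that the $\eps^{-2}$ factors land in the right places, and that the Paley--Wiener integral $\ell_{h/\eps}$ is correctly identified with $\dot W(\cdot/\eps)$ on a set of full $P_{\eps,\theta}$-measure. No genuine analytic difficulty arises because the shift $K(x) - K(\theta)$ automatically lies in $Y$, the Cameron--Martin space, irrespective of any (non)linearity of $K$ — the hypothesis $K(0)=0$ is not even needed for this particular computation and is presumably retained only for consistency with the surrounding development.
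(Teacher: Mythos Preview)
Your argument is correct and rests on the same tool as the paper's proof, the Cameron--Martin formula for the Gaussian measure on the abstract Wiener space $(E,\mathscr B(E))$ with reproducing kernel Hilbert space $(Y,\eps^{-1}\|\cdot\|_Y)$. The only difference is organizational: the paper first computes $\d P_{\eps,x}/\d P_{\eps,0}$ and $\d P_{\eps,0}/\d P_{\eps,\theta}$ separately (this is where the hypothesis $K(0)=0$ enters, so that $P_{\eps,0}$ is the law of $\eps\dot W$ itself) and then multiplies, whereas you apply Cameron--Martin directly to the pair $(P_{\eps,x},P_{\eps,\theta})$ via the single shift $h=K(x)-K(\theta)$. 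Your route is marginally cleaner and, as you correctly observe, makes the assumption $K(0)=0$ superfluous for this lemma.
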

The proof of this lemma relies on the Cameron--Martin formula for Gaussian measures on Banach spaces \citep[Prop. 2.24]{daPratoZabczyk1992} and is postponed to Section~\ref{secProofDens}. 
Linearity of $K$ yields $\eps^{-1}(K(\theta_{\eps})-K(\theta))=Kb$ and thus by Lemma~\ref{lemDensity}
\begin{equation}\label{eqCameronMartin}
  \log\frac{\d P_{\eps,\theta_{\eps}}}{\d P_{\eps,\theta}}(y_{\eps,\theta})
  =\dot W(Kb)-\frac{1}{2}\|Kb\|_Y^2\quad P_{\eps,\theta}-a.s.
\end{equation}
and therefore model \eqref{eqWhiteNoise} with linear operator $K$ satisfies the classical LAN condition (even without local and asymptotic) with parameter $h=Kb\in\ran K$. To find an information bound for the derived parameter $\chi(\theta)=\scapro\zeta\theta_X$, we express it in terms of $\eta=K\theta$ by
\[
  \psi(\eta):=\scapro{\zeta}{K^{-1}\eta}_X=\chi(\theta).
\]
Since $K^{-1}$ is typically not continuous, $\psi$ will not be continuous along the path $t\mapsto K\theta_t=\eta+tKb$ without further assumptions. Supposing however $\zeta\in \ran K^\star$, where $\ran K^\star$ denotes the range of the adjoint operator $K^\star$, continuity and linearity of $\psi$ follows from
\[
  \psi(\eta)=\scapro{K^\star y}{K^{-1}\eta}_X=\scapro y\eta_Y\quad\text{for any }y\in (K^\star)^{-1}(\{\zeta\}).
\]
In fact, we will see below that the condition $\zeta\in \ran K^\star$ is equivalent to the regularity of $\psi$. If $K^\star$ is not injective there are many solutions $y$ of the equation $K^\star y=\zeta$. The unique solution with minimal norm is given by the Moore--Penrose pseudoinverse 
\begin{equation}\label{eqMoorePenrose}
  (K^\star)^\dagger\zeta
  :=(K^\star|_{(\ker K^\star)^\perp})^{-1}(\zeta)
  =\Pi_{(\ker K^\star)^\perp}(K^\star)^{-1}(\{\zeta\})\quad\text{for }\quad\zeta\in\ran K^\star,
\end{equation}
where $\Pi_{(\ker K^\star)^\perp}$ denotes the orthogonal projection onto the orthogonal complement $(\ker K^\star)^\perp$ of the kernel of $K^\star$, cf. \citet[Def. 2.2 and Prop. 2.3]{englEtAl1996} for the definition and fundamental properties of the pseudoinverse.

Given the regularity of the parameter, an LAN version of the H\'ajek--Le Cam convolution theorem \citep[see][Thm. 3.11.2]{vanderVaartWellner1996} yields that the variance of any regular estimator is bounded from below by
\begin{equation}\label{eqLinInfoBound}
  \|\Pi_{\overline\ran K}(K^\star)^{-1}(\{\zeta\})\|_Y^2=\|(K^\star)^{\dagger}\zeta\|_Y^2\quad\text{if}\quad\zeta\in\ran K^\star,
\end{equation}
since the closure of the range of $K$ satisfies $\overline\ran K=(\ker K^\star)^\perp$. 
\begin{remark}
  Suppose that the operator $K$ is injective and compact and denote the domain of $K$ by $\dom K$. Then $K$ is adapted to the Hilbert scale $(\dom (K^\star K)^{-\alpha})_{\alpha\ge0}$ generated by $(K^\star K)^{-1}$ and its degree of ill-posedness is $\alpha=1/2$ \cite[cf.][]{natterer1984}. According to \cite{goldenshlugerPereverzev2000}, the parameter $\chi(\theta)=\scapro\zeta\theta_X$ can be estimated with parametric rate $\eps$ if and only if $\theta\in\ran((K^\star K)^{1/2})$. Noting that $\ran K^\star=\ran((K^\star K)^{1/2})$ \citep[Prop. 2.18]{englEtAl1996}, we recover the condition $\zeta\in \ran K^\star$. Since the existence of a regular estimator implies in particular that $\chi(\theta)$ can be estimated with rate $\eps$, this condition is natural for stating a convolution theorem.
\end{remark}
\begin{remark}\label{remUpperBound}
  The information bound in \eqref{eqLinInfoBound} is sharp. Usually, regularization methods are necessary to construct estimators in ill-posed problems because $K^\dagger$ is unbounded and the observation $y_{\eps,\theta}$ may not be in its domain. Assuming $\zeta\in\ran K^\star$, we can however define the estimator $\hat\chi(\theta):=\scapro{(K^\star)^\dagger\zeta}{y_{\eps,\theta}}_Y$ which satisfies
  \begin{align*}
    \hat\chi(\theta)-\chi(\zeta)
    &=\scapro{(K^\dagger)^\star \zeta}{K\theta}_Y-\scapro\zeta\theta_X+\eps\dot W((K^\dagger)^\star\zeta)\\
    &=\scapro\zeta{(K^\dagger K-\Id)\theta}_X+\eps\dot W((K^\dagger)^\star\zeta)
    \sim\mathcal N(0,\eps^2\|(K^\dagger)^\star\zeta\|_Y^2)
  \end{align*}
  where we used $K^\dagger K=\Pi_{(\ker K)^\perp}=\Id$ because $K$ is assumed to be injective. Therefore, the estimator $\hat\chi(\theta)$ is efficient.
\end{remark}

If the operator $K$ in model \eqref{eqWhiteNoise} is nonlinear, the situation is more involved and a naive approach may fail as the following example illustrates. We define the Fourier transform of a function $f\in L^1(\R)\cup L^2(\R)$ as $\F f(u):=\int e^{iux}f(x)\d x$.
\begin{example}\label{exDiffEq}
  For a given $\theta\in \Theta:=\{f\in L^2(\R)|f\ge0\}\subset X:=L^2(\R)$ consider the linear differential equation in $f$
  \begin{equation}\label{eqDiffEq}
    f'=-f+\theta^2\quad\text{with}\quad \lim_{t\to-\infty}f(t)=0,
  \end{equation}
  which has the explicit solution $f_\theta(t):=\int_{-\infty}^te^{-(t-s)}\theta^2(s)\d s$.
  The inverse problem is to estimate a linear functional $\chi(\theta)=\scapro\theta\zeta_X, \zeta\in X,$ given an observation of the solution $f_\theta\in Y:=L^2(\R)$ of the previous equation corrupted by white noise. Since \eqref{eqDiffEq} is equivalent to $\F[\theta^2]=\F[f+f']=(1-iu)\F f$, the operator $K:\Theta\to L^2(\R)$, which maps $\theta$ to the solution $f_\theta$, can be written as
  \[
    K(\theta)=\F^{-1}\big[(1-iu)^{-1}\F[\theta^2](u)\big].
  \]
  Note that $K$ is well defined because $\|(1-i\cdot)^{-1}\F[\theta^2]\|_{L^2}\le\|(1-i\cdot)^{-1}\|_{L^2(\R)}\|\theta\|_{L^2(\R)}^2$. We see immediately that $K$ is nonlinear and injective on $\Theta$. Due to the derivative in \eqref{eqDiffEq}, $\theta$ does not depend continuously on the data $f_\theta$ and thus the problem is ill-posed. 
  
  Following the strategy of the linear model, we introduce the direct parameter $\eta=K(\theta)$ and write $\psi(\eta)=\scapro{\zeta}{K^{-1}(\eta)}_X=\chi(\theta)$. Note that $\psi$ is nonlinear in $\eta$. To study pathwise continuity of $\psi$, we consider the path $[0,1)\ni t\mapsto\eta_t=\eta+th$ with direction $h=K(b), b\in \Theta$. Note that $\eta_t\in\ran K$ since 
  \[
    \big(\eta_t+\eta_t'\big)^{1/2}=\big(\eta+\eta'+t(h+h')\big)^{1/2}=\big(\theta^2+tb^2\big)^{1/2}\in \Theta.
  \]
  For some intermediate point $\xi\in[0,t]$ the mean value theorem yields
  \begin{align}
   &t^{-1}\big(\psi(\eta_t)-\psi(\eta)\big)
   =t^{-1}\scapro{K^{-1}(\eta_t)-K^{-1}(\eta)}\zeta_X\notag\\
   =&\scapro{\tfrac{1}{2}(\eta+\eta')^{-1/2}(h+h')}\zeta_X-t\scapro{\tfrac{1}{4}(\eta_\xi+\eta_\xi')^{-3/2}(h+h')^2}\zeta_X.\label{eqExLinearization}
  \end{align}
  The first term is the linearization
  $
    \dot\psi_\eta(h)=\frac{1}{2}\scapro{(\eta+\eta')^{-1/2}(h+h')}{\zeta}_X
    =\frac{1}{2}\scapro{\theta^{-1}b^2}{\zeta}_X
  $
  where we have to impose suitable conditions on $\zeta$ first to compensate the potentially non-integrable singularities of $\theta^{-1}$ and second to ensure continuity in $h$. But even if these conditions are satisfied, pathwise continuity of $\psi$ may fail because the integrability problems in the remainder in \eqref{eqExLinearization} are more serious because $b^4$ is not integrable for every $b\in X$ and the singularities of $(\theta^2+\xi b^2)^{-3/2}$ are more restrictive. 
\end{example}

What went wrong in Example~\ref{exDiffEq}? Regularity of the parameter $\psi$ depends on two properties: (i) the choice of $\zeta$ and (ii) the directions and paths along which we want to show the regularity. In particular the second point has to capture the inverse structure of the problem. The approach in the following section provides a solution to both problems. It gives a clear condition on $\zeta$ and it determines appropriate perturbations of the parameter, described by the tangent space.

\subsection{Local linear weak approximation}
Turning to a much more general model, the following definition will ensure that it behaves locally like the model \eqref{eqWhiteNoise} with a linear operator. Let $\Theta$ be a parameter set such that for any $\theta\in\Theta$ there is a \textit{tangent set} $\dot\Theta_\theta$ that is a subset of a Hilbert space with scalar product $\scapro\cdot\cdot_\theta$ such that any element $b\in\dot\Theta_\theta$ is associated to a path $[0,\tau)\ni t\mapsto\theta_t\in\Theta$ starting at $\theta$ and for some $\tau>0$. For the sake of brevity we suppress the dependence of the path on $b$ in the notation. In the following $Y_n\overset{P_n}{\Rightarrow}Y$ denotes weak convergence of the law of $Y_n$ under the measure $P_n$ to the law of $Y$ for random variables $Y_1,Y_2,\dots$ and $Y$.
\begin{definition}\label{defReg}
   The sequence of statistical experiments $(\mathcal X_n,\mathcal A_n,P_{n,\theta}:\theta\in\Theta)$ is called a \textit{locally regular indirect model} at $\theta\in\Theta$ with respect to the tangent set $\dot\Theta_\theta$ if there are a Hilbert space $(H_\theta,\scapro \cdot\cdot_H)$ and a continuous linear operator 
  \[
    A_\theta:\lin \dot \Theta_\theta\to H_\theta
  \]
  such that for some rate $r_n\downarrow0$ and for every $b\in\dot\Theta_\theta$ with associated path $t\mapsto \theta_t$ there are random variables $(G_n(h))_{h\in\ran A_\theta}$ satisfying
  \begin{gather}\label{eqLAN}
    \log\frac{\d P_{n,\theta_{r_n}}}{\d P_{n,\theta}}=G_n(A_\theta b)-\frac{1}{2}\|A_\theta b\|_H^2 \quad\text{and}\\
    \big(G_n(h_1),\dots,G_n(h_k)\big)\overset{P_{n,\theta}}{\Longrightarrow}\big(G(h_1),\dots,G(h_k)\big)\text{ for all } k\in\N,h_1,\dots h_k\in\ran A_\theta\notag
  \end{gather}
  for a centered Gaussian process $(G(h))_{h\in\ran A_\theta}$ with covariance $\E[G(h_1)G(h_2)]=\scapro{h_1}{h_2}_H$. The operator $A_\theta$ is called \textit{generalized score operator}.
\end{definition}
In the sequel we will use the notation
\[
  \mathcal P_n:=\{P_{n,\theta}|\theta\in\Theta\}.
\]
The statistical interpretation of this regularity becomes clear by comparing it to the likelihood ratio \eqref{eqCameronMartin} in the linear white noise model. Condition~\eqref{eqLAN} means that locally at $\theta$ the model $(P_{n,\theta_{r_n}})$ converges to a limit experiment which is a linear inverse problem \eqref{eqWhiteNoise} in white noise with operator $K=A_\theta$ on the Hilbert space $H_\theta$ and with noise level $\eps_n=r_n$. In other words at $\theta$, the model converges weakly to the linear inverse problem in the sense of \citet{leCam1972}. Therefore, the classical white noise model~\eqref{eqWhiteNoise} serves as a \textit{locally linear weak approximation} of the general model $\mathcal P_n$. The difference to the classical theory is that the limit experiment is not a direct Gaussian shift experiment, but an indirect Gaussian shift, preserving the inverse structure of the problem. In that sense property \eqref{eqLAN} generalizes the classical local asymptotic normality, 
which corresponds to the identity operator $A_\theta=\Id$, to \textit{local asymptotic indirect normality (LAIN)}. 
%In view of the i.i.d. situation in \cite{vanderVaart1991} and in Section~\ref{secIid} we call $A_\theta$ \textit{generalized score operator}.

%Throughout we assume that $(\mathcal X_n,\mathcal A_n,P_{n,\theta}:\theta\in\Theta)$ is regular at $\theta\in\Theta$. 
The derived parameter $\chi:\Theta\to\R^d$, which is the aim of the statistical inference, should then be regular in the following sense.
\begin{definition}
 The function $\chi:\Theta\mapsto \R^d, d\in\N$, is \textit{pathwise differentiable} at $\theta\in\Theta$ with respect to the tangent set $\dot\Theta_\theta$ if there is a continuous linear operator $\dot\chi_\theta:\lin \dot\Theta_\theta\to\R^d$ such that for every $b\in\dot\Theta_\theta$ with associated path $[0,\tau)\ni t\mapsto \theta_t\in\Theta$ it holds
 \[
   \frac{1}{t}\big(\chi(\theta_{t})-\chi(\theta)\big)\to\dot\chi_\theta b\quad\text{as }t\downarrow0.
 \]
\end{definition}
By the Riesz representation theorem we can write $\dot\chi_\theta b=\scapro{\tilde\chi_\theta}b_\theta$ for all $b\in\dot\Theta_\theta$ and some gradient $\tilde\chi_\theta\in\overline\lin\dot\Theta_\theta$. Recall that the sequence of parameter functions $\psi_n:\mathcal P_n\to\R^d$ given by $\psi_n(P_{n,\theta})=\chi(\theta)$ is called regular at $\theta$ relative to $A_\theta\dot\Theta_\theta$ if for any $h\in A_\theta\dot\Theta_\theta$ and any submodel $t\mapsto P_{n,\theta_t}$ satisfying \eqref{eqLAN} with $h=A_\theta b$ for some $b\in\dot\Theta_\theta$, it holds
\begin{equation}\label{eqDiffPsi}
  \frac{\psi_n(P_{n,\theta_{r_n}})-\psi_n(P_{n,\theta})}{r_n}\to\dot\psi_\theta(h)
\end{equation}
for some continuous linear map $\dot\psi_\theta:H\to\R^d$. Again the Riesz representation theorem determines a unique $\tilde\psi_\theta\in\overline{\ran }A_\theta =\overline{\lin}A_\theta\dot\Theta_\theta$ such that $\dot\psi_\theta(h)=\scapro{\tilde\psi_\theta}{h}_H$ for all $h\in\ran A_\theta$. $\tilde\psi_\theta$ is called \textit{efficient influence function} in the classical semiparametric theory. 
% In that sense $\dot\psi_\theta$ can be understood as pathwise ``derivative'' of the sequence $\psi_n$ at $P_{n,\theta}$. 
As the last ingredient we recall that a sequence of estimators $T_n:\mathcal X_n\to \R^d$ is called \textit{regular} at $\theta$ with respect to the rate $r_n$ and relative to the directions $\dot\Theta_\theta$ if there is a limit distribution $L$ on the Borel measurable space $(\R^d,\mathscr B(\R^d))$ such that
\[
  \frac{1}{r_n}\big(T_n-\chi(\theta_{r_n})\big)\overset{P_{n,\theta_{r_n}}}{\Longrightarrow} L
\]
for every $b\in\dot{\Theta}_\theta$ and any corresponding submodel $t\mapsto P_{n,\theta_t}$. We recall the definition \eqref{eqMoorePenrose} of the Moore--Penrose pseudoinverse $K^\dagger$ of an operator $K$ on its range and
obtain the following convolution theorem.
\begin{theorem}\label{thmScore}
  Let $(\mathcal X_n,\mathcal A_n,P_{n,\theta}:\theta\in\Theta)$ be a locally regular indirect model at $\theta\in\Theta$ and $\chi:\Theta\to\R^d$ be pathwise differentiable at $\theta$ with respect to $\dot\Theta_\theta$. Then the sequence $\psi_n:\mathcal P_n\to\R^d$ is regular at $\theta$ relative to $\dot\Theta_\theta$ if and only if each coordinate function of $\tilde\chi_\theta=(\tilde\chi_\theta^{(1)},\dots,\tilde\chi_\theta^{(d)})$ is contained in the range of the adjoint score operator $A_\theta^\star:H\to\overline{\lin}\dot\Theta_\theta$. \par
  In this case the efficient influence function is given by $\tilde\psi_{\theta}=(A_\theta^\star)^\dagger \tilde\chi_\theta=((A_\theta^\star)^\dagger \tilde\chi_\theta^{(1)},\dots,(A_\theta^\star)^\dagger \tilde\chi_\theta^{(d)})$ and for any regular estimator sequence $T_n:\mathcal X_n\to\R^d$ the limit distribution satisfies $L=\mathcal N(0,\Sigma)\ast M$ for some Borel probability distribution $M$ and covariance matrix $\Sigma\in\R^{d\times d}$ with
  \begin{equation}\label{eqInfoBound}
    \Sigma_{k,l}=\big\langle\tilde\psi_\theta^{(k)},\tilde\psi_\theta^{(l)}\big\rangle_H
    =\big\langle(A_\theta^\star)^\dagger \tilde\chi_\theta^{(k)},(A_\theta^\star)^\dagger \tilde\chi_\theta^{(l)}\big\rangle_H,\quad k,l\in\{1,\dots,d\}.
  \end{equation}
\end{theorem}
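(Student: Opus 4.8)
The plan is to split the statement into two independent pieces. The first --- the equivalence together with the formula $\tilde\psi_\theta=(A_\theta^\star)^\dagger\tilde\chi_\theta$ --- is pure Hilbert-space bookkeeping, while the convolution theorem itself will follow by reading \eqref{eqLAN} as a local asymptotic normality condition and invoking the classical H\'ajek--Le Cam theorem in the Gaussian limit experiment. For the first piece, fix $b\in\dot\Theta_\theta$ with its associated path $t\mapsto\theta_t$ and write $h=A_\theta b$. Since $\psi_n(P_{n,\theta})=\chi(\theta)$ and $r_n\downarrow0$, pathwise differentiability of $\chi$ forces $r_n^{-1}\big(\psi_n(P_{n,\theta_{r_n}})-\psi_n(P_{n,\theta})\big)\to\dot\chi_\theta b$; hence $\psi_n$ is regular at $\theta$ (equivalently, \eqref{eqDiffPsi} holds) if and only if there is a continuous linear $\dot\psi_\theta:H\to\R^d$ with $\dot\chi_\theta b=\dot\psi_\theta(A_\theta b)$ for every $b\in\dot\Theta_\theta$. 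Representing $\dot\chi_\theta b=\scapro{\tilde\chi_\theta}{b}_\theta$ and, by Riesz, $\dot\psi_\theta(A_\theta b)=\scapro{\tilde\psi_\theta}{A_\theta b}_H=\scapro{A_\theta^\star\tilde\psi_\theta}{b}_\theta$ with $\tilde\psi_\theta\in\overline{\ran}A_\theta$, such a $\dot\psi_\theta$ exists precisely when $\tilde\chi_\theta=A_\theta^\star\tilde\psi_\theta$: two continuous linear functionals of $b$ that agree on $\dot\Theta_\theta$ agree on $\overline{\lin}\dot\Theta_\theta$ by linearity, continuity and density, and since both $\tilde\chi_\theta$ and $A_\theta^\star\tilde\psi_\theta$ lie in $\overline{\lin}\dot\Theta_\theta$ they coincide. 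Reading this coordinate-wise yields the stated equivalence, and among all $g\in H$ solving $A_\theta^\star g=\tilde\chi_\theta^{(j)}$ the unique one in $\overline{\ran}A_\theta=(\ker A_\theta^\star)^\perp$ is $(A_\theta^\star)^\dagger\tilde\chi_\theta^{(j)}$ by \eqref{eqMoorePenrose}, so $\tilde\psi_\theta=(A_\theta^\star)^\dagger\tilde\chi_\theta$.

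Turning to the convolution theorem, assume now $\tilde\chi_\theta^{(j)}\in\ran A_\theta^\star$ for all $j$, so that $\psi_n$ is regular with efficient influence function $\tilde\psi_\theta\in\overline{\ran}A_\theta$. Condition \eqref{eqLAN}, together with the joint convergence of $(G_n(h_1),\dots,G_n(h_k))$ to the centered Gaussian vector with covariance $(\scapro{h_i}{h_j}_H)_{i,j}$, says exactly that the localisations of $\mathcal P_n$ at $\theta$ along the paths attached to $\dot\Theta_\theta$ are locally asymptotically normal at rate $r_n$, with local parameter $h=A_\theta b$ ranging over the tangent set $A_\theta\dot\Theta_\theta\subseteq H$ and with information inner product $\scapro\cdot\cdot_H$; equivalently, these localisations converge weakly in the sense of \citet{leCam1972} to the Gaussian shift (white noise) experiment on $\overline{\ran}A_\theta$. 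In that limit experiment the target is the continuous linear functional $h\mapsto\dot\psi_\theta(h)=\scapro{\tilde\psi_\theta}{h}_H$, whose efficient influence function is $\tilde\psi_\theta$ itself, so the H\'ajek--Le Cam convolution theorem there gives the optimal Gaussian $\mathcal N(0,\Sigma)$ with $\Sigma_{k,l}=\scapro{\tilde\psi_\theta^{(k)}}{\tilde\psi_\theta^{(l)}}_H$. Invoking the LAN version of the convolution theorem \citep[Thm.~3.11.2]{vanderVaartWellner1996} transfers this to the sequence: every regular estimator sequence $T_n$ satisfies $r_n^{-1}(T_n-\chi(\theta_{r_n}))\overset{P_{n,\theta_{r_n}}}{\Longrightarrow}\mathcal N(0,\Sigma)\ast M$ for some Borel probability measure $M$, and substituting $\tilde\psi_\theta=(A_\theta^\star)^\dagger\tilde\chi_\theta$ from the first step produces \eqref{eqInfoBound}.

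The first step is routine; the hard part will be the reduction in the second, namely checking that the hypotheses of \citep[Thm.~3.11.2]{vanderVaartWellner1996} are genuinely met although \eqref{eqLAN} is posited only path-by-path and although the ``local parameter'' is the indirect quantity $h=A_\theta b$ rather than $b$ itself. Concretely, one must first deduce mutual contiguity of $P_{n,\theta_{r_n}}$ and $P_{n,\theta}$ from \eqref{eqLAN} (Le Cam's first lemma), then control the joint weak limit of $\big(r_n^{-1}(T_n-\chi(\theta)),G_n(A_\theta b)\big)$ under $P_{n,\theta}$ using regularity of $T_n$, shift it under $P_{n,\theta_{r_n}}$ via Le Cam's third lemma, and only then run the characteristic-function argument underlying the convolution theorem. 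One also has to note that only the closed linear span $\overline{\lin}A_\theta\dot\Theta_\theta=\overline{\ran}A_\theta$ of the tangent set enters the bound, so that any lack of linear or conical structure of $\dot\Theta_\theta$ --- and of ``straightness'' of the associated paths --- is immaterial; this is precisely what lets the indirect model be treated like the classical one.
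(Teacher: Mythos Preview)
Your proof is correct and matches the paper's approach: derive $A_\theta^\star\tilde\psi_\theta=\tilde\chi_\theta$ from the identity $\dot\psi_\theta(A_\theta b)=\dot\chi_\theta b$, pin down $\tilde\psi_\theta=(A_\theta^\star)^\dagger\tilde\chi_\theta$ by uniqueness in $(\ker A_\theta^\star)^\perp=\overline{\ran}A_\theta$, then reparametrise locally by $h=A_\theta b\in A_\theta\dot\Theta_\theta$ and invoke \citet[Thm.~3.11.2]{vanderVaartWellner1996}. The Le Cam first/third lemma programme in your last paragraph is superfluous---those steps are internal to the cited convolution theorem and need not be reproduced---while the one genuine subtlety, that $A_\theta\dot\Theta_\theta$ need only be a subset of $H$ provided the finite-dimensional convergence of $G_n$ extends to $\lin A_\theta\dot\Theta_\theta$ (as Definition~\ref{defReg} already guarantees), is precisely the remark both you and the paper make.
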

\begin{proof}
  The characterization of regular parameter functions can be proved analogously to the i.i.d. setting studied by \citet[Thm. 3.1]{vanderVaart1991}. Let us first consider $d=1$. For any $b\in\dot\Theta_\theta$ there is a path $[0,\tau)\ni t\mapsto \theta_t$ in direction $b$ generating a submodel $t\mapsto P_{n,\theta_t}$ with $h=A_\theta b$. If $\psi_n$ is regular,
  \[
    \dot\psi_\theta(A_\theta b)
    =\lim_{n\to\infty}\frac{\psi_n(P_{n,\theta_{r_n}})-\psi_n(P_{n,\theta})}{r_n}
    =\lim_{n\to\infty}\frac{\chi(\theta_{r_n})-\chi(\theta)}{r_n}
    =\scapro{\tilde\chi_\theta}b_\theta.
  \]
  Since the equality $\scapro{\tilde\psi_{\theta}}{A_\theta b}_H=\dot\psi_\theta(A_\theta b)=\scapro{\tilde\chi_\theta}b_\theta$ holds for all $b\in\dot\Theta_\theta$, it follows $A_\theta^\star \tilde\psi_\theta=\tilde\chi_\theta$. To conclude the converse direction, we can use the previous display as definition of $\dot\psi_\theta$ and have to verify that it is indeed linear and continuous. But this follows because by assumption there is  some $\psi\in H$ such that $A_\theta^\star \psi=\tilde\chi_\theta$ and thus $\dot\psi_\theta(h)=\scapro{\psi}{h}_H$ for all $h\in H$.  Because $\overline\ran A_\theta=(\ker A_\theta^\star)^\perp$, there is exactly one solution of $A_\theta^\star\psi=\tilde\chi_\theta^{}$ in $\overline\ran A_\theta$ and this is given by $(A_\theta^\star)^\dagger \tilde\chi_\theta=\Pi_{(\ker A_\theta^\star)^\perp}(A_\theta^\star)^{-1}(\{\tilde\chi_\theta\})$. For $d>1$ it is sufficient to consider the coordinate functions separately.

  To conclude the second part of the theorem, we consider $A_\theta\dot\Theta_\theta$ as local parameter set and identify the local parameter $\kappa_n(A_\theta b):=\psi_n(P_{n,\theta_{r_n}})=\chi(\theta_{r_n})$ with $\kappa_n(0):=\psi_n(P_{n,\theta})$. Then $\kappa_n$ is regular relative to $A_\theta\dot\Theta_\theta$ and we can apply the convolution theorem in \citet[Thm. 3.11.2]{vanderVaartWellner1996}. Hereby, we have to note that it is sufficient if the local parameter set $A_\theta\dot\Theta_\theta$ is only a subset of a Hilbert space, and thus \eqref{eqLAN} may not hold for all linear combinations of elements in $\dot\Theta_\theta$, as long as the weak convergence $G_n(h)\Rightarrow G(h)$ under $P_{n,\theta}$ holds true for all $h\in\lin A_\theta\dot\Theta_\theta$. 
\end{proof}
The theorem implies immediately that the asymptotic covariance of every regular estimator of $\chi(\theta)$ is bounded from below by \eqref{eqInfoBound} in the order of nonnegative definite matrices. 
If $\tilde\chi_\theta$ is contained in the smaller range of the \textit{information operator} $A_\theta^\star A_\theta$, then the efficient influence function can be obtained by 
\[
  \tilde\psi_\theta=(A_\theta^\star)^\dagger\tilde\chi_\theta=A_\theta(A_\theta^\star A_\theta)^\dagger\tilde\chi_\theta,
\]
owing to $\ker(A_\theta^\star A_\theta)=\ker A_\theta$. Therefore, in this case the hardest parametric subproblem is given by the direction $(A_\theta^\star A_\theta)^{\dagger}\tilde\chi_\theta\in \overline\lin\dot\Theta_\theta$. In the finite dimensional linear model this lower bound coincides with the minimal variance of the Gau\ss--Markov theorem. Let us illustrate Theorem~\ref{thmScore} in several examples.
\begin{example}[Indirect regression model]\label{exIndReg}
  With $X=Y=L^2(\R)$ and a linear, bounded, injective operator $K:X\to Y$ consider the indirect regression model with deterministic design
  \[
    Y_i=(Kf)\Big(\frac{i}{n}\Big)+\eps_i,\quad i=1,\dots,n,\quad\text{with unkown }f\in X
  \]
  and with i.i.d. errors $\eps_1,\dots,\eps_n\sim\mu$ for some law $\mu$. Therefore, $(Y_1,\dots,Y_n)\sim P_{n,f}=\Pi_{i=1}^n \mu(\cdot -(Kf)(i/n))$. \citet{khoujmaneEtAl2007} have proved a convolution theorem for estimating $\chi(f)=\scapro\zeta f_{L^2(\R)}, \zeta\in X$ with $\|\zeta\|_{L^2(\R)}=1$, where $f$ and the error distribution $\mu$ are unknown. Let us focus on the submodel with standard normal errors $\eps_i\sim \mathcal N(0,1)$. Under smoothness conditions it is stated in this submodel \citep[Thm. 2 with $\delta=0$]{khoujmaneEtAl2007} that the asymptotic distribution of any regular estimator is given by the convolution $\mathcal N(0,\|K\zeta\|_{L^2(\R)}^{-2})*M$ for some Borel probability measure $M$. 
  
  To apply Theorem~\ref{thmScore}, we have to check that this model is a regular indirect model. Choosing the tangent space $\dot\Theta_f=X$ with linear paths $f_t=f+tb$ in directions $b\in\dot\Theta_f$, we calculate
  \begin{align*}
    &\log \frac{\d P_{n,f_{1/\sqrt n}}}{\d P_{n,f}}(Y_1,\dots,Y_n)\\
    &\qquad=-\frac{1}{2n}\sum_{i=1}^n(Kb)^2\Big(\frac{i}{n}\Big)+\frac{1}{n^{1/2}}\sum_{i=1}^n(Kb)\Big(\frac{i}{n}\Big)\Big(Y_i-(Kf)\Big(\frac{i}{n}\Big)\Big)\\
    &\qquad\overset{P_{n,f}}\Longrightarrow\mathcal N\Big(-\frac{1}{2}\|Kb\|_{L^2(\R)}^2,\|Kb\|_{L^2(\R)}^2\Big).
  \end{align*}
  Therefore, the generalized score operator is given by $A_f=K$. Assuming for simplicity that $\zeta\in\ran (K^\star K)$, the asymptotic distribution of any regular estimator is given by a convolution $\mathcal N(0,\|K(K^\star K)^{\dagger}\zeta\|_{L^2(\R)}^2)\ast M$. Since the Cauchy--Schwarz inequality yields $\|K\zeta\|_{L^2(\R)}\|K(K^\star K)^{\dagger}\zeta\|_{L^2(\R)}\ge\|\zeta\|_{L^2(\R)}^2=1$, the bound by \cite{khoujmaneEtAl2007} achieves our information bound if and only if $K^\star K=\lambda \Id$ for some $\lambda>0$. Therefore, their information bound may not be optimal. The reason is that $f$ has been perturbed in direction $\zeta$ instead of the the least favorable direction $(K^\star K)^{\dagger}\zeta$.
\end{example}

\begin{example}[Nonlinear white noise model]\label{exWhiteNoise}
  Suppose we observe $y_{n,\theta}=K(\theta)+\eps_n \dot W$ with $\eps_n\to0$ as $n\to\infty$ on the Hilbert space $Y$ for some $\theta\in X$ and for a not necessarily linear operator $K:X\supset\Theta\to Y$ with $K(0)=0$ which is G\^{a}teaux differentiable at the inner point $\theta\in\Theta$. That is there is a continuous linear operator $\dot K_\theta:X\to Y$ with 
  \[
    \lim_{t\to0}\frac{1}{t}\big(K(\theta+tb)-K(\theta)\big)= \dot K_\theta b \quad\text{for all}\quad b\in X.
  \]
  
  By the Hilbert space structure, the tangent space can be chosen as $\dot\Theta_\theta=X$ by considering the path $[0,1)\ni t\mapsto\theta_t:=\theta+tb$ for $b\in\dot\Theta_\theta$. Lemma~\ref{lemDensity} yields for any $b\in X$ with associated path $t\mapsto\theta_t$
  \[
    \log \frac{\d P_{n,\theta_{\eps_n}}}{\d P_{n,\theta}}(y_{n,\theta})
    =\dot W\Big(\frac{K(\theta+\eps_nb)-K(\theta)}{\eps_n}\Big)-\frac{1}{2\eps_n^2}\|K(\theta+\eps_nb)-K(\theta)\|_Y^2.
  \]
  Therefore, the LAIN property~\eqref{eqLAN} is satisfied with generalized score operator chosen as the G\^ateaux derivative $A_\theta=\dot K_\theta$ at $\theta$ and 
  \begin{align*}
    G_n(A_\theta b)=\dot W\Big(\frac{K(\theta+\eps_nb)-K(\theta)}{\eps_n}\Big)-\frac{1}{2}\Big(\Big\|\frac{K(\theta+\eps_nb)-K(\theta)}{\eps_n}\Big\|_Y^2-\|\dot K_\theta b\|_Y^2\Big)
  \end{align*}
  where $G_n(A_\theta b)\Rightarrow \mathcal N(0,\|\dot K_\theta b\|_Y^2)$, since the variance of the first term of $G_n$ converge to $\|\dot K_\theta b\|_Y^2$ and the second term converges deterministically to zero by the G\^ateaux differentiability. The convergence of the finite dimensional distributions follows likewise.

  Along the path $t\mapsto\theta_t$ the linear functional $\chi(\theta)=\scapro\zeta \theta_X$ possesses the derivative
  \[
    \lim_{t\to0}\frac{1}{t}\big(\chi(\theta_t)-\chi(\theta)\big)=\scapro\zeta b_X.
  \] 
  Hence, $\dot\chi_\theta b=\scapro{\tilde\chi_\theta}b_X$ for the gradient $\tilde\chi_\theta=\zeta$ and all $b\in\dot\Theta_\theta$. 
  %Therefore, Assumption~\ref{assLAN} is satisfied  and Gaussian process $G=\dot W$ on $Y$. In fact, the result is much stronger, because \eqref{eqLAN} does not only hold in the weak limit, but equality is true almost surely for every $n$. Moreover, the right-hand side of the above display does not depend on $\theta$ and thus holds even globally on $X$.
  Applying Theorem~\ref{thmScore} shows in particular that the asymptotic variance of every regular sequence of estimators $T_n$ (with respect to the rate $\eps_n$) of the functional $\chi(\theta)$ is bounded from below by
  \[
    \|(\dot K_\theta^\star)^{\dagger}\zeta\|_Y^2\quad\text{whenever }\quad\zeta\in\ran\dot K_\theta^\star.
  \]
  If $K$ is a linear bounded operator the score operator is $A_\theta=\dot K_\theta=K$ and thus the statement of Theorem~\ref{thmScore} coincides with the previous result~\eqref{eqLinInfoBound}.

  In Remark~\ref{remUpperBound} we saw that that this information bound can be achieved if $K$ is linear. For nonlinear operators an upper bound is beyond the scope of this paper.
\end{example}
\begin{example}[Example~\ref{exDiffEq} continued]\label{exDiffEq2}
  Let us come back to the ill-posed inverse problem in Example~\ref{exDiffEq} related to the differential equation~\eqref{eqDiffEq}. The corresponding nonlinear operator $K:X\to Y$ with $\Theta=\{f\in L^2(\R)|f\ge0\}\subset X$ and $X=Y=L^2(\R)$ was given by $K(\theta)=\F^{-1}[(1-i\cdot)^{-1}\F[\theta^2]]$. For $\theta\in \Theta$ and any $b\in \dot\Theta_\theta:=\Theta$ the functional $\chi(\theta)=\scapro\zeta\theta_X,\zeta\in L^2(\R)$, is pathwise differentiable along the path $[0,1)\mapsto\theta_t=\theta+tb$ with gradient $\tilde\chi_\theta=\zeta$. $K$ is pathwise differentiable with respect to the tangent set $\dot\Theta_\theta$ at $\theta$ with derivative  
  \[
    \dot K_\theta b=\F^{-1}\big[(1-iu)^{-1}\F[2b\theta](u)\big].
  \]
  Since $\dot K$ is well defined on $\lin\dot\Theta_\theta=L^2(\R)$, the generalized score operator $A_\theta:\lin\dot\Theta_\theta\to H:=L^2(\R)$ is given by $A_\theta b=\dot K_\theta b$ as in the previous example. The ``directions'' in which we perturb the direct parameter $K(\theta)$ are then given by $A_\theta \dot\Theta_\Theta=\{K(\sqrt{\theta b})|b\in X\}$. Applying Plancherel's identity twice, the adjoint of $A_\theta$ can be calculated via
  \begin{align*}
    \scapro {\dot K_\theta b}h
    &=\frac{1}{2\pi}\int(1-iu)^{-1}\F[2b\theta](u)\overline{\F h}(u)\d u\\
    %&=\int b(x)\theta(x)\F^{-1}\big[(1-iu)^{-1}\F h(-u)\big](x)\d x\\
    &=\scapro b{2\theta\F^{-1}\big[(1-iu)^{-1}\F h(-u)\big]}_X,
  \end{align*}
  for $b,h\in L^2(\R)$. Therefore, $A_\theta^\star h=2\theta\F^{-1}[(1-iu)^{-1}\F h(-u)]$ and regularity of the parameter function follows for any $\zeta\in\ran A_\theta^\star=\{\theta f|f\in H^1(\R)\}$ with the Sobolev space $H^1(\R)=\{f\in L^2(\R)|\int(1+u^2)|\F f(u)|^2\d u<\infty\}$.
\end{example}
As Example~\ref{exDiffEq2} indicates, the adjoint score operator $A_\theta^\star$ has usually no closed range. In these cases it is a difficult problem to determine the range of $A_\theta^\star$. As the following characterization shows, it is sufficient to know $A_\theta^\star$ on a dense subspace. This approximation argument will turn out to be very useful for more complex models.
\begin{prop}\label{propApprox}
  Let $A_\theta^\star:H\to\overline{\lin}\dot\Theta_\theta$ be injective and let $\mathcal G$ be a dense subspace in $H$. Then $\tilde\chi_\theta\in\overline{\lin}\dot\Theta_\theta$ is contained in $\ran A_\theta^\star$ if and only if the following is satisfied
  \begin{enumerate}
    \item there exists a sequence $\chi_n\in\ran A_\theta^\star|_{\mathcal G}$ such that $\chi_n\to\tilde\chi_\theta$ as $n\to\infty$ and
    \item $(A_\theta^\star)^{-1}\chi_n$ converges to some $\psi\in H$.
  \end{enumerate}
  In this case $A_\theta^\star\psi=\tilde\chi_\theta$ and thus $\Pi_{\overline{\ran }A_\theta}\psi=\psi$ is the efficient influence function.
\end{prop}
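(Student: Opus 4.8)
The plan is to reduce everything to two elementary facts: the Hilbert-space adjoint $A_\theta^\star$ is bounded, hence continuous, on all of $H$ (after extending $A_\theta$ by continuity from $\lin\dot\Theta_\theta$ to $\overline{\lin}\dot\Theta_\theta$, its adjoint is bounded with the same norm), and $\mathcal G$ is dense in $H$. I would first record the consequences of injectivity of $A_\theta^\star$: the map $(A_\theta^\star)^{-1}$ is well defined and single-valued on $\ran A_\theta^\star$, and since $\ker A_\theta^\star=\{0\}$ we have $\overline{\ran}A_\theta=(\ker A_\theta^\star)^\perp=H$, so the projection $\Pi_{\overline{\ran}A_\theta}$ in the last sentence is the identity and $(A_\theta^\star)^\dagger\tilde\chi_\theta$ is simply the unique preimage of $\tilde\chi_\theta$ under $A_\theta^\star$.

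For the ``only if'' direction I would start from some $\psi\in H$ with $A_\theta^\star\psi=\tilde\chi_\theta$, use density of $\mathcal G$ to pick $\psi_n\in\mathcal G$ with $\psi_n\to\psi$, and set $\chi_n:=A_\theta^\star\psi_n\in\ran A_\theta^\star|_{\mathcal G}$. Continuity of $A_\theta^\star$ then gives $\chi_n\to A_\theta^\star\psi=\tilde\chi_\theta$, which is~(i), while injectivity gives $(A_\theta^\star)^{-1}\chi_n=\psi_n\to\psi$, which is~(ii). For the ``if'' direction, given~(i) and~(ii), I would write $\psi_n:=(A_\theta^\star)^{-1}\chi_n$ (legitimate because $\chi_n\in\ran A_\theta^\star|_{\mathcal G}\subseteq\ran A_\theta^\star$) and $\psi:=\lim_n\psi_n$ from~(ii). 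Applying the continuous map $A_\theta^\star$ to $\psi_n\to\psi$ yields $A_\theta^\star\psi_n\to A_\theta^\star\psi$; but $A_\theta^\star\psi_n=\chi_n\to\tilde\chi_\theta$ by~(i), so uniqueness of limits forces $A_\theta^\star\psi=\tilde\chi_\theta$, i.e.\ $\tilde\chi_\theta\in\ran A_\theta^\star$. Together with the remarks of the first paragraph this also gives $\psi=(A_\theta^\star)^\dagger\tilde\chi_\theta=\Pi_{\overline{\ran}A_\theta}\psi$, and Theorem~\ref{thmScore} (with $d=1$) identifies $\psi$ as the efficient influence function.

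I do not expect a genuine obstacle here; the only points requiring care are the justification that $A_\theta^\star$ is bounded (automatic for a Hilbert-space adjoint, once $A_\theta$ is seen as a bounded operator on $\overline{\lin}\dot\Theta_\theta$) and keeping strict track of the domain of $(A_\theta^\star)^{-1}$, so that it is only ever applied to elements genuinely lying in $\ran A_\theta^\star$. The substance of the proposition is not the proof but its use: in the concrete models one typically knows the action of $A_\theta^\star$ explicitly only on a convenient dense subspace $\mathcal G$, and conditions~(i)--(ii) reduce the verification of $\tilde\chi_\theta\in\ran A_\theta^\star$ and the computation of the information bound $\|(A_\theta^\star)^\dagger\tilde\chi_\theta\|_H^2$ to a limiting procedure over $\mathcal G$.
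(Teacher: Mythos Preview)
Your proposal is correct and follows essentially the same approach as the paper: the ``only if'' direction is identical (approximate $\psi$ by $\psi_n\in\mathcal G$ and push through $A_\theta^\star$), and for the ``if'' direction the paper phrases the argument as closedness of the inverse $(A_\theta^\star)^{-1}$ inherited from the closed graph of the bounded operator $A_\theta^\star$, which is exactly your continuity-plus-uniqueness-of-limits step. Your additional remark that injectivity forces $\overline{\ran}A_\theta=H$, making the projection the identity, is a clean way to read the last sentence of the proposition.
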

\begin{proof}
  \textit{``if'':}
  Since $A_\theta^\star$ is a bounded operator, its graph 
  \[
    \{(g,A_\theta^\star g):g\in H\}\subset H\times\overline{\lin}\dot\Theta_\theta   
  \]
  is closed. Therefore, the inverse operator $(A_\theta^\star)^{-1}|_{\ran A_\theta^\star}$ is closed, too. Consequently, (i) and (ii) imply $\tilde\chi_\theta\in\dom (A_\theta^\star)^{-1}=\ran A_\theta^\star$ with $(A_\theta^\star)^{-1}\tilde\chi_\theta=\psi$.

  \textit{``only if'':} Since $\tilde\chi_\theta\in\ran A_\theta^\star$ there is some $\psi\in H$ such that $A_\theta^\star\psi=\tilde\chi_\theta$. Moreover, there is a sequence $(g_n)\subset\mathcal G$ with $g_n\to\psi$ because $\mathcal G$ is dense in $H$. The continuity of $A_\theta^\star$ yields then $\chi_n:=A_\theta^\star g_n\to A_\theta^\star \psi=\tilde\chi_\theta$.
\end{proof}

\subsection{I.i.d. observations}\label{secIid}
When the observations are given by $n$ independent and identically distributed random variables $Y_1,\dots,Y_n$, the model simplifies to the product space $(\mathcal X^n,\mathcal A^{\otimes n},P_\theta^{\otimes n}:\theta\in\Theta)$ such that the probability measure is completely described by the family of marginal distributions $\mathcal P=\{P_\theta:\theta \in\Theta\}$. We will rephrase the conditions of the previous section in terms of the marginal measure $P_\theta$. This setting appears quite often in applications and, in particular, the deconvolution model and the L\'evy model which we study in Sections~\ref{secDecon} and \ref{secLevy} will be two examples. That is why, we will give some details for the i.i.d. case.

Recall that a \textit{tangent set} $\dot{\mathcal P}_{P_\theta}$ at $P_\theta$ is a set of score functions $g$ of submodels $[0,\tau)\ni t\mapsto P_{\theta_t}$ starting at $P_\theta$ and for some $\tau>0$. In the present situation the derived parameter can be written as $\psi(P_\theta)=\chi(\theta)$, independent of $n$. The classical Haj\'ek--Le Cam convolution theorem \citep[cf.][Thm. 3.3.2]{bicklEtAl1998} applies if $\psi$ is differentiable at $P_\theta$ relative to $\dot{\mathcal P}_{P_\theta}$, that is,  
there exists a continuous linear map $\dot\psi:L^2(P_\theta)\to\R^k$ such that
\[
  \lim_{t\to0}\frac{\psi(P_{\theta_t})-\psi(P_\theta)}{t}=\dot\psi g\quad\text{for all } g\in\dot{\mathcal P}_{P_\theta}.
\] 
This differentiability corresponds to the general assumption \eqref{eqDiffPsi}. In the i.i.d. setting local asymptotic normality follows from Hellinger regularity of the submodel $t\mapsto P_{\theta_t}$. Therefore, we can reformulate the conditions in Definition~\ref{defReg} to the following
\begin{assumption}\label{assRegularity}
  At $\theta\in\Theta$ let the parameter set give rise to a tangent set $\dot \Theta_\theta$. Furthermore, let there be a continuous linear operator 
  \[
    A_\theta:\lin \dot \Theta_\theta\to L_0^2(P_\theta):=\Big\{g\in L^2(P_\theta):\int g\d P_\theta=0\Big\}\quad
  \]
  such that for every $b\in\dot\Theta_\theta$ with associated path $t\mapsto\theta_t$
  \begin{equation}\label{eqHellingerDeriv}
    \int\Big(\frac{\d P_{\theta_t}^{1/2}-\d P_\theta^{1/2}}{t}-\frac{1}{2}A_\theta b\d P_\theta^{1/2}\Big)^2\to0\quad\text{as }t\downarrow0.
  \end{equation}
\end{assumption}
In \eqref{eqHellingerDeriv} $\d P_{\theta_t}$ denotes the Radon--Nikodym $\mu$-density of $P_{\theta_t}$ for some dominating measure $\mu$ and the integration is with respect to $\mu$. Since the integral does not depend on $\mu$, it is suppressed in the notation.
\begin{lemma}\label{lem:hellinger}
  If the product model $(\mathcal X^n,\mathcal A^{\otimes n},P_\theta^{\otimes n}:\theta\in\Theta)$ satisfies Assumption~\ref{assRegularity} at $\theta\in\Theta$, then it is a locally regular indirect model at $\theta\in\Theta$ with respect to the tangent set $\dot\Theta_\theta$, with rate $r_n=n^{-1/2}$ and (generalized) score operator $A_\theta$.
\end{lemma}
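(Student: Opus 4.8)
The plan is to derive from Assumption~\ref{assRegularity} the stochastic expansion of the log-likelihood ratio of the product experiment along each submodel, and then to read off the ingredients required by Definition~\ref{defReg}, noting that here $H_\theta=L_0^2(P_\theta)$ carries the inner product $\scapro fg_H=\int fg\d P_\theta$.

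Fix $b\in\dot\Theta_\theta$ with associated path $[0,\tau)\ni t\mapsto\theta_t$ and put $g:=A_\theta b\in L_0^2(P_\theta)$. Condition~\eqref{eqHellingerDeriv} says precisely that $t\mapsto P_{\theta_t}$ is differentiable in quadratic mean at $t=0$ with score $g$. The classical expansion of log-likelihood ratios of product experiments under this condition --- Taylor-expanding $s\mapsto\log(1+s)$ around $s=0$ applied to the ratio of square-root densities $\d P_{\theta_{r_n}}^{1/2}/\d P_\theta^{1/2}$, followed by a triangular-array central limit theorem for the resulting sum and an $L^1(P_\theta^{\otimes n})$-bound on the quadratic remainder (see, e.g., \citet{vanderVaartWellner1996}) --- yields, with $r_n=n^{-1/2}$,
\[
  \log\frac{\d P_{\theta_{r_n}}^{\otimes n}}{\d P_\theta^{\otimes n}}(Y_1,\dots,Y_n)
  =\frac{1}{\sqrt n}\sum_{i=1}^n g(Y_i)-\frac12\int g^2\d P_\theta+R_n,\qquad R_n=o_{P_\theta^{\otimes n}}(1).
\]
Since $\int g^2\d P_\theta=\|A_\theta b\|_H^2$ by the choice of $H_\theta$, this is already the first line of \eqref{eqLAN} up to the remainder $R_n$.

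To match Definition~\ref{defReg} exactly I would absorb the remainder into the score process: for the fixed $b$ and path, define
\[
  G_n(A_\theta b):=\log\frac{\d P_{\theta_{r_n}}^{\otimes n}}{\d P_\theta^{\otimes n}}+\tfrac12\|A_\theta b\|_H^2
\]
and $G_n(h):=\tfrac{1}{\sqrt n}\sum_{i=1}^n h(Y_i)$ for every other $h\in\ran A_\theta$; the first identity in \eqref{eqLAN} then holds by construction, and by the expansion above $G_n(A_\theta b)=\tfrac{1}{\sqrt n}\sum_{i=1}^n(A_\theta b)(Y_i)+R_n$. For the finite-dimensional convergence, the multivariate central limit theorem applied to the i.i.d.\ centered random vectors $(h_1(Y_i),\dots,h_k(Y_i))_{i\ge1}$, together with Slutsky's lemma to dispose of $R_n=o_{P_\theta^{\otimes n}}(1)$ in the coordinate $A_\theta b$, gives
\[
  \big(G_n(h_1),\dots,G_n(h_k)\big)\overset{P_{n,\theta}}{\Longrightarrow}\mathcal N(0,\Gamma),\qquad\Gamma_{jl}=\int h_jh_l\d P_\theta=\scapro{h_j}{h_l}_H,
\]
for all $k\in\N$ and $h_1,\dots,h_k\in\ran A_\theta$, i.e.\ to $(G(h_1),\dots,G(h_k))$ for a centered Gaussian process $G$ on $\ran A_\theta$ with covariance $\scapro\cdot\cdot_H$. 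As $A_\theta$ is continuous and linear by Assumption~\ref{assRegularity}, all requirements of Definition~\ref{defReg} are met with $H_\theta=L_0^2(P_\theta)$, rate $r_n=n^{-1/2}$ and generalized score operator $A_\theta$.

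The only substantial input is the displayed log-likelihood expansion under differentiability in quadratic mean, which is the classical Hellinger-to-LAN argument; I expect this to be the main (and essentially the only nontrivial) obstacle. The remaining steps are a translation between the i.i.d.\ formulation of Assumption~\ref{assRegularity} and the abstract Definition~\ref{defReg} --- in particular the harmless device of folding the $o_P(1)$ remainder into the single coordinate $G_n(A_\theta b)$ so that \eqref{eqLAN} holds as an exact identity rather than only asymptotically.
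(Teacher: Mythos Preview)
Your proof is correct and follows essentially the same approach as the paper: invoke the classical log-likelihood expansion under differentiability in quadratic mean (the paper cites \citet[Prop.~2.1.2]{bicklEtAl1998}), identify $H_\theta=L_0^2(P_\theta)$ and $r_n=n^{-1/2}$, and deduce finite-dimensional convergence by the CLT (the paper phrases this via the Cram\'er--Wold device). You are merely more explicit about absorbing the $o_P(1)$ remainder into $G_n(A_\theta b)$ so that \eqref{eqLAN} holds as an exact identity; the paper leaves this implicit.
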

\begin{proof}
  The Hellinger regularity in Assumption~\ref{assRegularity} implies local asymptotic normality, since it yields, for instance, see \citet[Prop. 2.1.2]{bicklEtAl1998},
  \[
    \sum_{j=1}^n\log \frac{\d P_{\theta_{1/\sqrt n}}}{\d P_\theta}(X_j)=\frac{1}{\sqrt n}\sum_{j=1}^nA_\theta b(X_j)-\frac{1}{2}\|A_\theta b\|_{L^2(P_\theta)}+R_n
  \]
  for a remainder $R_n$ that converges in $P_\theta^{\otimes n}$-probability to zero. Hence, the LAIN property \eqref{eqLAN} is statisfied with rate $1/\sqrt n$ and with the score operator $A_\theta$ mapping into the Hilbert space $H=L_0^2(P_\theta)$. The convergence of the finite dimensional distributions in Definition~\ref{defReg} follows from the Cram\'er--Wold device and the linearity of $A_\theta$.
\end{proof}
Note that $L_0^2(P_\nu)$ is the orthogonal complement of $\lin 1$ and thus it is a closed subspace of the Hilbert space $L^2(P_\nu)$. The operator $A_\theta$ maps directions $b\in\dot\Theta_\theta$ into score functions at $P_\theta$ and thus it is called \textit{score operator} which explains the name given in the general case. It generates the tangent set $\dot{\mathcal P}_{P_\theta}=A_\theta\dot\Theta_\theta$ of the model $\mathcal P$ at $P_\theta$. Note that the range of $A_\theta$ is a subset of the maximal tangent set as the following example shows.
\begin{example}[Maximal tangent set]\label{exMaxTang}
  Let $\mathcal P$ be the model of all probability measures on some sample space. The maximal tangent set of the model $\mathcal P$ at some distribution $P$ is given by $L^2_0(P)$. This can be seen as follows: Score functions are necessarily centered and square integrable. For any score $g\in L^2_0(P)$ a one-dimensional submodel is $t\to c(t)k\big(tg(x)\big)\d P(x)$ with a $C^2(\R)$-function $k:\R\to\R_+$ which satisfies $k(0)=k'(0)=1$ and such that $k'/k$ is bounded and with normalization constant $c(t)=\|k(tb)\|_{L^1(\nu)}^{-1}$, for instance, $k(y)=2/(1+e^{-2y})$ \cite[cf. ][Ex. 25.16]{vanderVaart1998}.
\end{example}
Theorem~\ref{thmScore} yields then the following convolution theorem, which was already obtained by \citet{vanderVaart1991}. 
\begin{corollary}\label{corScore}
  Suppose the product model with marginal distributions $\mathcal P=\{P_\theta|\theta\in\Theta\}$ satisfies Assumption~\ref{assRegularity} and let $\chi:\Theta\to \R^d$ be pathwise differentiable with respect to $\dot\Theta_\theta$. The map $\psi:\mathcal P\to\R^d$ is differentiable at $P_\theta$ relative to the tangent set $\dot{\mathcal P}_{P_\theta}=A_\theta\dot\Theta_\theta$ if and only if each coordinate function of $\tilde\chi_\theta$ is contained in the range of the adjoint score operator $A_\theta^\star:L^2_0(P_\theta)\to\overline{\lin}\dot\Theta_\theta$. In this case the efficient influence function is given by $\tilde\psi_{\theta}=(A_\theta^\star)^\dagger \tilde\chi_\theta$.
\end{corollary}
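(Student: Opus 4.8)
The plan is to obtain Corollary~\ref{corScore} as a direct specialization of Theorem~\ref{thmScore} to the i.i.d.\ product model, so that the only genuine work is a dictionary translation between the two formulations. First I would invoke Lemma~\ref{lem:hellinger}: under Assumption~\ref{assRegularity} the product model $(\mathcal X^n,\mathcal A^{\otimes n},P_\theta^{\otimes n}:\theta\in\Theta)$ is a locally regular indirect model at $\theta$ with rate $r_n=n^{-1/2}$, Hilbert space $H=L_0^2(P_\theta)$, and generalized score operator $A_\theta$. Since the hypothesis placed on $\chi$ is precisely pathwise differentiability with respect to $\dot\Theta_\theta$, with gradient $\tilde\chi_\theta\in\overline{\lin}\dot\Theta_\theta$ supplied by the Riesz representation theorem, all structural ingredients of Theorem~\ref{thmScore} are in place.

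The key step is then to identify ``$\psi$ is differentiable at $P_\theta$ relative to the tangent set $\dot{\mathcal P}_{P_\theta}=A_\theta\dot\Theta_\theta$'', in the sense recalled in Section~\ref{secIid}, with ``the sequence $\psi_n$ given by $\psi_n(P_{n,\theta})=\chi(\theta)$ is regular at $\theta$ relative to $\dot\Theta_\theta$'' in the sense of \eqref{eqDiffPsi}. To see this I would exploit that in the i.i.d.\ model $\psi_n(P_{n,\theta})=\psi(P_\theta)=\chi(\theta)$ does not depend on $n$: for any $b\in\dot\Theta_\theta$ with associated submodel $t\mapsto P_{\theta_t}$ and score $g=A_\theta b\in\dot{\mathcal P}_{P_\theta}$, the difference quotient in \eqref{eqDiffPsi} evaluated at $t=r_n=n^{-1/2}\downarrow0$ is simply $(\chi(\theta_{r_n})-\chi(\theta))/r_n$, which converges to $\dot\chi_\theta b=\scapro{\tilde\chi_\theta}{b}_\theta$ by pathwise differentiability of $\chi$; and the full limit as $t\downarrow0$ exists for the same reason. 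Hence both regularity notions reduce to the single requirement that the limiting functional, viewed as a function of $g=A_\theta b$, be independent of the representative $b\in A_\theta^{-1}\{g\}$ and extend to a continuous linear map $\dot\psi_\theta$ on $H=L_0^2(P_\theta)$ (equivalently on $L^2(P_\theta)$ by trivial extension across $\lin 1$). This equivalence is the one place where an argument rather than a citation is needed, and I expect it to be routine: pathwise differentiability of $\chi$ already furnishes the limit, and path-independence is exactly what ``regular'' and ``differentiable'' each encode.

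Finally I would feed this equivalence back into Theorem~\ref{thmScore}. With $H=L_0^2(P_\theta)$ and the adjoint score operator $A_\theta^\star:L_0^2(P_\theta)\to\overline{\lin}\dot\Theta_\theta$, the theorem immediately gives that $\psi_n$ --- equivalently $\psi$ --- is differentiable/regular at $\theta$ if and only if every coordinate $\tilde\chi_\theta^{(j)}$, $j=1,\dots,d$, lies in $\ran A_\theta^\star$, and in that case the efficient influence function is $\tilde\psi_\theta=(A_\theta^\star)^\dagger\tilde\chi_\theta$, computed coordinatewise; this reproduces the result of \citet{vanderVaart1991} within our framework. The main obstacle, such as it is, is the bookkeeping of the middle paragraph --- in particular making precise that the regularity of the sequence $\psi_n$ forces the limiting functional to be path-independent; everything else is a transcription of Theorem~\ref{thmScore} and Lemma~\ref{lem:hellinger}.
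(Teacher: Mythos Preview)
Your proposal is correct and follows essentially the same approach as the paper: the paper states the corollary as an immediate consequence of Theorem~\ref{thmScore} via Lemma~\ref{lem:hellinger} and gives no separate proof. Your middle paragraph, translating between differentiability of $\psi$ at $P_\theta$ (the i.i.d.\ notion in Section~\ref{secIid}) and regularity of the constant sequence $\psi_n$ in the sense of \eqref{eqDiffPsi}, is a small elaboration the paper leaves implicit, but it is straightforward and adds nothing beyond the intended argument.
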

In particular, for $\zeta\in\ran A_\theta^\star$ the asymptotic covariance matrix of every regular estimator is bounded from below by
\[
  \E_\theta\big[\tilde\psi_{P_\theta}\tilde\psi_{P_\theta}^\top\big]
  =\E_\theta\Big[\big((A_\theta^\star)^\dagger \tilde\chi_\theta\big)\big((A_\theta^\star)^\dagger \tilde\chi_\theta\big)^\top\Big].
\]
If $\tilde\chi_\theta\notin\ran A_\theta^\star$, \citet{vanderVaart1991} shows that there exists no regular estimator of the functional $\chi(\theta)$.

In the i.i.d. case we find the following statistical interpretation of Proposition~\ref{propApprox}, adopting the Cram\'er--Rao point of view. 
%Roughly, the proposition says that if the derivative $\tilde\chi_\theta$ can be approximated in a way such that the scores of the corresponding least favorable submodels converge, then the information bound extends to $\tilde\chi_\theta$. 
Let $\mathcal G$ be a dense subset of $L^2_0(P_\theta)$ and let $\chi(\theta)$ be a one-dimensional derived parameter with gradient $\tilde\chi_\theta$. Consider an approximating sequence $\chi_n\to\tilde\chi_\theta$ satisfying $\mathcal G\ni g_n:=(A_\theta^\star)^{\dagger}\chi_n\to\tilde\psi_{P_\theta}$. Assuming $\mathcal G\subset\ran A_\theta$, we can define $b_n:=A_\theta^{\dagger}g_n=I^{\dagger}\chi_n$ where $I:=A_\theta^\star A_\theta$ is the information operator. The information bound can be read as a Cram\'er--Rao bound in the least favorable submodel
\begin{align}
    \E[\tilde\psi_{P_\theta}^2]
    &=\sup_{g\in\lin\dot{\mathcal P}_{P_\theta}}\frac{\scapro{\tilde\psi_{P_\theta}}g_{P_\theta}^2}{\scapro g g_{P_\theta}}
    =\sup_{b\in\lin\dot\Theta_\theta}\frac{\scapro{\tilde\chi_\theta}{b}_\theta^2}{\scapro {A_\theta b}{A_\theta b}}_{P_\theta}\notag\\
    &\ge\frac{\big(\scapro{\chi_n}{b_n}_\theta-\scapro{\tilde\chi_\theta-\chi_n}{b_n}_\theta\big)^2}{\scapro{A_\theta b_n}{A_\theta b_n}_{P_\theta}},\label{eqCR}
\end{align}
where we plugged in the direction $b_n=I^{\dagger}\chi_n$. The term $\scapro{\chi_n}{b_n}_\theta^2/\scapro{A_\theta b_n}{A_\theta b_n}_{P_\theta}=\scapro{g_n}{g_n}_{P_\theta}$ is the Cram\'er--Rao bound for the estimation problem of a functional, which approximates $\chi(\theta)$, with gradient $\chi_n$. The approximation error $\scapro{\tilde\chi_\theta-\chi_n}{b_n}_\theta$ should be understood as bias. Since $b_n$ does not have to be bounded, $\chi_n\to\tilde\chi_\theta$ is not sufficient to conclude that the bias vanishes. However, Proposition~\ref{propApprox}(ii) implies that this error converges to zero owing to the Cauchy--Schwarz inequality:
\begin{align*}
  |\scapro{\tilde\chi_\theta-\chi_n}{b_n}_\theta|
  &=|\scapro{(A_\theta^\star)^{\dagger}(\tilde\chi_\theta-\chi_n)}
  {A_\theta b_n}_{P_\theta}|\\
  &\le\|\tilde \psi_{P_\theta}-g_n\|_{P_\theta}\|g_n\|_{P_\theta}\to0.
\end{align*}
Hence, the Cram\'er--Rao bound \eqref{eqCR} converges to the information bound $\scapro{\tilde\psi_{P_\theta}}{\tilde\psi_{P_\theta}}_{P_\theta}$. A similar perspective was taken by \citet[Lem. 3]{soehlTrabs2012}.

\section{Deconvolution}\label{secDecon}
Let us discuss the previous results in the classical nonparametric deconvolution setup, which has many applications, e.g., measurement-error problems (see \cite{meister2009}). We observe an i.i.d. sample 
\begin{equation}\label{eqDeconModel}
  Y_i=X_i+\eps_i,\quad i=1,\dots,n.
\end{equation}
Let $X_i$ and $\eps_i$ be independent and have distributions $\nu$ and $\mu$, respectively. If $\mu$ is known, the model is $\mathcal P=\{P_\nu=\nu\ast\mu|\nu\in\Theta\}$ where the parameter set $\Theta$ is given by the set of all probability measures. We aim for a convolution theorem for estimating the linear functional $\psi(P_\nu)=\chi(\nu):=\int\zeta\d\nu$ with $\zeta\in L^2(\nu)$. One of the most interesting examples is the estimation of the distribution function of $X_1$, corresponding to $\zeta=\ind_{(-\infty,t]}$ for $t\in\R$.

In a general linear indirect density estimation setting, a convolution theorem was already proved by \citet{vanRooijEtAll1999}, who use the spectral decomposition of the operator. Their approach applies however only for a restricted class of functionals, depending on the polar decomposition, and they need an abstract condition on the density of $\nu$ which is difficult to verify. It implicitly assumes an appropriate decay behavior on this density. Their application to the deconvolution setting is restricted to a specific example. Studying deconvolution in more detail, \citet{soehlTrabs2012} have shown an information bound, assuming a polynomial decay behavior of a sufficiently regular Lebesgue density of $\nu$ and a bit more than second moments. They described the class of admissible functionals analytically, including the estimation of the distribution function of $\nu$. Here, we are able to relax the conditions on $\nu$ and $\mu$ considerably, see Theorem~\ref{thmDecon} and Remark~\ref{exDecon3} below.

For any $\nu\in\Theta$ we choose the tangent space
\[
  \dot\Theta_\nu:=L_0^2(\nu)=\overline\lin\dot\Theta_\nu.
\]
According to Example~\ref{exMaxTang}, $\dot\Theta_\nu$ coincides with the maximal tangent set for direct observations. For any direction $b\in\dot\Theta_\nu$ and some sufficiently small $\tau>0$ the path $[0,\tau)\ni t\to\nu_t$ where $\frac{\d\nu_t}{\d\nu}=k(tb)/\int k(tb)\d\nu$ with $k:\R\to\R_+$ as in Example~\ref{exMaxTang} is a submodel of $\Theta$ with $b=\frac{\partial}{\partial t}|_{t=0}\log(\d\nu_t)$. Using $|k(tb)|\le t|b|\in L^2(\nu)$ and dominated convergence, the pathwise derivative of $\chi$ along $t\mapsto\nu_t$ at $t=0$ is given by
\begin{align*}
  \lim_{t\to0}t^{-1}(\chi(\nu_t)-\chi(\nu))
  &=\lim_{t\to0}\int\zeta(x)t^{-1}\Big(\frac{\d \nu_t}{\d\nu}(x)-1\Big)\d\nu(x)\\
  &=\int\zeta(x)b(x)\nu(\d x)
  =\scapro\zeta b_\nu=:\dot\chi_\nu b.
\end{align*}
Hence, the derivative can be represented by $\dot\chi_\nu b=\scapro{\tilde\chi_\nu}b_\nu$ for $\tilde\chi_\nu=\zeta-\int\zeta\d\nu\in\dot\Theta_\nu$. The path $t\mapsto\nu_t$ induces a regular submodel $t\mapsto P_{\nu_t}=\nu_t\ast \mu$ which is shown by the following lemma.
\begin{lemma}\label{lemRegDecon}
  For any nonzero $b\in\dot\Theta_\nu=L_0^2(\nu)$ the submodel $[0,\tau)\ni t\mapsto P_{\nu_t}=\nu_t\ast \mu$, for $\tau>0$ sufficiently small, is Hellinger differentiable, that is \eqref{eqHellingerDeriv} holds with continuous score operator 
  \begin{equation}\label{eqADecon}
    A_\nu:\quad \dot\Theta_\nu\to L^2_0(P_\nu),\quad b\mapsto\E\big[b(X)|X+\eps\big]=\frac{\d\big((b\nu)\ast\mu\big)}{\d P_{\nu}},
  \end{equation}
  where the expectation is taken with respect to the product measure measure $P^{(X,\eps)}=\nu\otimes\mu$.
\end{lemma}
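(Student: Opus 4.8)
The plan is to deduce the Hellinger differentiability of the \emph{indirect} submodel $t\mapsto P_{\nu_t}=\nu_t\ast\mu$ from the elementary $L^2(\nu)$-differentiability of the \emph{direct} submodel $t\mapsto\nu_t$, and then to upgrade $L^2$-differentiability of the likelihood ratios $\d P_{\nu_t}/\d P_\nu$ to $L^2$-differentiability of their square roots, which is what \eqref{eqHellingerDeriv} really requires.

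First I would set up the bookkeeping. Write $g_t:=\d\nu_t/\d\nu=k(tb)/c_t$ with $c_t:=\int k(tb)\,\d\nu>0$. Since $k$ is globally Lipschitz (indeed $0<k'\le1$) and $b\in L^1(\nu)$, we have $|k(tb)-1|\le t|b|$, hence $c_t\to1$, $\tfrac1t(c_t-1)\to\int b\,\d\nu=0$, and $c_t$ stays bounded away from $0$ near $t=0$, so the $g_t$ are uniformly bounded there. Because $P_\nu(A)=0$ forces $\mu(A-x)=0$ for $\nu$-a.e.\ (hence $\nu_t$-a.e.)\ $x$, it follows that $P_{\nu_t}\ll P_\nu$ and likewise $(b\nu)\ast\mu\ll P_\nu$; put $\rho_t:=\d P_{\nu_t}/\d P_\nu$. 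Testing against bounded $\phi$, the two representations $\int\phi\,\d P_{\nu_t}=\E[\phi(X+\eps)\,g_t(X)]$ (under $P^{(X,\eps)}=\nu\otimes\mu$) and $\int\phi\,\d P_{\nu_t}=\int\phi\,\rho_t\,\d P_\nu$ identify $\rho_t=\E[g_t(X)\mid X+\eps]$ $P_\nu$-a.s.; the same computation with $g_t$ replaced by $b$ gives $\d((b\nu)\ast\mu)/\d P_\nu=\E[b(X)\mid X+\eps]=A_\nu b$. Linearity of $A_\nu$ is immediate, $\int A_\nu b\,\d P_\nu=\int b\,\d\nu=0$, and conditional Jensen yields $\|A_\nu b\|_{L^2(P_\nu)}\le\|b\|_{L^2(\nu)}$, so $A_\nu\colon L^2_0(\nu)\to L^2_0(P_\nu)$ is a well-defined continuous (in fact contractive) operator.

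Next I would prove $L^2$-differentiability of $\rho_t$. Since $\tfrac1t(k(tb(x))-1)\to k'(0)b(x)=b(x)$ pointwise and $|\tfrac1t(k(tb)-1)|\le|b|\in L^2(\nu)$, dominated convergence gives $\tfrac1t(k(tb)-1)\to b$ in $L^2(\nu)$; subtracting the constant $\tfrac1t(c_t-1)\to0$ and dividing by $c_t\to1$ yields $\tfrac1t(g_t-1)\to b$ in $L^2(\nu)$. Applying the conditional expectation and conditional Jensen,
\begin{align*}
  \Big\|\tfrac1t(\rho_t-1)-A_\nu b\Big\|_{L^2(P_\nu)}
  &=\Big\|\E\big[\tfrac1t(g_t(X)-1)-b(X)\,\big|\,X+\eps\big]\Big\|_{L^2(P_\nu)}\\
  &\le\Big\|\tfrac1t(g_t-1)-b\Big\|_{L^2(\nu)}\longrightarrow0\qquad(t\downarrow0),
\end{align*}
so in particular $\rho_t\to1$ in $L^1(P_\nu)$, and hence in $P_\nu$-probability.

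The delicate step is passing to square roots. From $\sqrt{\rho_t}-1=(\rho_t-1)/(1+\sqrt{\rho_t})$ I would split
\[
  \frac{\sqrt{\rho_t}-1}{t}-\tfrac12 A_\nu b=\Big(\tfrac1t(\rho_t-1)-A_\nu b\Big)\frac{1}{1+\sqrt{\rho_t}}+A_\nu b\Big(\frac{1}{1+\sqrt{\rho_t}}-\tfrac12\Big).
\]
The first summand is dominated in modulus by $|\tfrac1t(\rho_t-1)-A_\nu b|$ because $0<(1+\sqrt{\rho_t})^{-1}\le1$, so it tends to $0$ in $L^2(P_\nu)$ by the previous step. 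For the second summand, $(1+\sqrt{\rho_t})^{-1}-\tfrac12\to0$ in $P_\nu$-probability (since $\rho_t\to1$ in probability) and is bounded by $\tfrac12$, so $A_\nu b\,\big((1+\sqrt{\rho_t})^{-1}-\tfrac12\big)\to0$ in probability with $L^2(P_\nu)$-dominating function $\tfrac12|A_\nu b|$, whence it tends to $0$ in $L^2(P_\nu)$ by dominated convergence. Adding the two pieces and taking $P_\nu$ as dominating measure in \eqref{eqHellingerDeriv} (legitimate since $P_{\nu_t}\ll P_\nu$),
\[
  \int\Big(\frac{\d P_{\nu_t}^{1/2}-\d P_\nu^{1/2}}{t}-\tfrac12 A_\nu b\,\d P_\nu^{1/2}\Big)^2=\Big\|\frac{\sqrt{\rho_t}-1}{t}-\tfrac12 A_\nu b\Big\|_{L^2(P_\nu)}^2\longrightarrow0,
\]
which is \eqref{eqHellingerDeriv}. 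I expect this passage to be the main obstacle: $L^2$-differentiability of $\rho_t$ does not in general give $L^2$-differentiability of $\sqrt{\rho_t}$, and the argument above works only because the prefactor $(1+\sqrt{\rho_t})^{-1}$ is uniformly bounded and converges to $\tfrac12$ in probability, which lets dominated convergence replace the usual continuity-of-Fisher-information hypothesis.
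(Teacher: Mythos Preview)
Your proof is correct and takes a genuinely different route from the paper's. The paper proceeds via Proposition~2.1.1 in \citet{bicklEtAl1998}: it establishes the conditional-expectation representation $p_t=\E_0[n_t(X)\mid X+\eps]$ (your $\rho_t$) through regular conditional probabilities, then verifies that $t\mapsto p_t(y)$ is continuously differentiable for $P_\nu$-a.e.\ $y$ with derivative $\dot p_t=\E_0[\dot n_t(X)\mid X+\eps]$, and finally checks that the Fisher information $I_t=\E_t[\dot p_t^2]$ is finite, continuous in $t$, and nonzero near $t=0$; the cited proposition then delivers Hellinger differentiability. You instead bypass the Fisher-information criterion entirely: you push the $L^2(\nu)$-differentiability of $g_t$ through the conditional expectation via conditional Jensen to get $L^2(P_\nu)$-differentiability of $\rho_t$, and then upgrade to square roots by the algebraic identity $\sqrt{\rho_t}-1=(\rho_t-1)/(1+\sqrt{\rho_t})$ together with dominated convergence. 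Your argument is more elementary and self-contained---it avoids both the pointwise-in-$t$ regularity analysis and the external citation---and it is essentially an inline proof of the classical fact (e.g.\ \citet[Prop.~1.199]{witting1985}, which the paper invokes elsewhere) that $L^2$-differentiability of the likelihood ratio implies Hellinger differentiability. The paper's route, on the other hand, is the textbook approach and would adapt more readily to paths $t\mapsto\nu_t$ lacking the global Lipschitz control on $k$ that makes your domination arguments work.
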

\begin{proof}
  First, note that the (signed) measure $(f\nu)\ast\mu$ is absolutely continuous with respect to $\nu\ast\mu$ for any $f\in L^1(\nu)$, written as $(f\nu)\ast\mu\ll\nu\ast\mu$. In particular, the Radon--Nikodym density in \eqref{eqADecon} is well defined and $P_{\nu_t}\ll P_\nu$ for all $t>0$. Let us write $\E_t[\cdot]$ for the expectation under $P_t^{(X,\eps)}=\nu_t\otimes\mu$. We define $p_t(y):=\frac{\d P_{\nu_t}}{\d P_\nu}(y)$, $n_t(x):=\frac{\d \nu_t}{\d\nu}(x)=k(tb(x))/\int k(tb)\d\nu$. Let $\R\times\mathscr B(\R)\ni(y,A)\mapsto\kappa_{X,X+\eps}(y,A)$ be the regular conditional probability of $P^{(X,\eps)}(X\in\cdot|X+\eps)$ that is
  \[
    \kappa_{X,X+\eps}(y,A)=P^{(X,\eps)}(X\in A|X+\eps=y)
  \]
  for $P^\nu$-a.e. $y\in\R$ and all $A\in\mathscr B(\R)$. We claim
  \begin{equation}\label{eqPyt}
    p_t(Y)=\E_0\big[n_t(X)\big|X+\eps=Y\big]=\int n_t(x)\kappa_{X,X+\eps}(Y,\d x)\quad P_\nu-a.s.
  \end{equation}
  To verify \eqref{eqPyt}, we note for any Borel set $A\in\mathscr B(\R)$
  \begin{align}
    \E_0[\ind_A(Y)p_t(Y)]
    =\E_t[\ind_A(Y)]
    &=\E_0[\ind_A(X+\eps)n_t(X)]\notag\\
    &=\E_0\big[\ind_A(Y)\E_0[n_t(X)|X+\eps=Y]\big]\label{eqCondExp}
  \end{align}
  which shows the first equality in \eqref{eqPyt}. The second one follows from the choice of $\kappa_{X,X+\eps}$.

  We will show regularity of the submodel $(-\tau,\tau)\ni t\mapsto P_{\nu_t}=\nu_t\ast \mu$ for a sufficiently small $\tau>0$ by applying Proposition 2.1.1 in \cite{bicklEtAl1998}. Using the properties of $k$,
  \[
    \dot n_t(x):=\frac{\partial}{\partial t}n_t(x)=\frac{b(x)k'(tb(x))\int k(tb)\d\nu-k(tb(x))\int bk'(tb)\d\nu}{(\int k(tb)\d\nu)^2}
  \]
  can be uniformly in $t\in(-\tau,\tau)$ bounded by $c_b(b(x)+1)$ for a constant $c_b>0$, depending on $b$, and it is continuous in $t$ on $(-\tau,\tau)$ for some sufficiently small $\tau>0$. Since $b\in L^2(\nu)\subset L^2(\nu\otimes\mu)$, dominated convergence and \eqref{eqPyt} yield that $p_t(y)$ is continuously differentiable in $t\in(-\tau,\tau)$ for $P_\nu$-almost all $y\in\R$ with derivative
  \[
    \dot p_t(y)=\frac{\partial}{\partial t}p_t(y)=\int \dot n_t(x)\kappa_{X,X+\eps}(y,\d x)=\E_0[\dot n_t(X)|X+\eps=y].
  \]
  By Jensen's inequality we see that
  \begin{align}
    \|\dot p_t\|_{L^2(P_t)}^2
    &=\E_0\big[p_t(Y)\big|\E_0[\dot n_t(X)|X+\eps=Y]\big|^2\big]\label{eqFishInfoDecon}\\
    &\le\E_0\big[p_t(Y)|\dot n_t(X)|^2\big].\notag
  \end{align}
  Since $n_t(x)$ can be bounded uniformly in $t\in(-\tau,\tau)$ and $x\in\R$, the density $p_t(Y)$ is $P_\nu$-a.s. bounded by some constant $C>0$ owing to \eqref{eqPyt}. Therefore, we conclude from the previous estimate together with the bound $|\dot n_t|\le c_b(b+1)$ and $b\in L^2(\nu)$ that
  \[
    \|\dot p_t\|_{L^2(P_t)}^2\le C\E_0[|\dot n_t(X)|^2]\le Cc_b^2\E_0[(b(X)+1)^2]<\infty.
  \]
  In particular, the Fisher information $I_t:=\E_t[\dot p_t(Y)^2]$ is finite. Using \eqref{eqFishInfoDecon}, we infer that $I_t$ is continuous. Noting that 
  \begin{align*}
    \frac{\partial}{\partial t}\Big|_{t=0}\int k(tb)\d\nu=\int b\d\nu=0\quad\text{and thus}\quad
    \dot n_0(y)=\frac{\partial}{\partial t}\Big|_{t=0}n_t(x)=b(x),
  \end{align*}
  we have $I_0=\E_0[b(X)^2]$ and $I_t$ is therefore nonzero for $b\neq 0$ and $t$ small enough. In combination with the continuous differentiability of $p_t$, Proposition 2.1.1 in \cite{bicklEtAl1998} yields the Hellinger differentiability~\eqref{eqHellingerDeriv} at $t=0$ with derivative $\frac{1}{2}\dot p_0$. We obtain the score operator
  \[
    A_\nu b:=\dot p_0=\E_0[b(X)|X+\eps].
  \]
  To see that $A_\nu:\dot\Theta_\nu\to L_0^2(P_\nu)$ is well defined and continuous, we again use Jensen's inequality which yields
  \[
    \E_0[A_\nu b]=\E_0[b(X)]=0\quad\text{and}\quad \E_0[|A_\nu b|^2]\le\E_0[|b(X)|^2]=\|b\|_{L^2(\nu)}^2.
  \]
  \iffalse
  Using that the Hellinger integral is not increasing under the measurable mapping $(x_1,x_2)\mapsto x_1+x_2$ \citep[Cor. 9.10]{vajda1989}, we indeed verify
  \[
    \|A_\nu b\|_{L^2(P_\nu)}=\Big\|\frac{\d((b\nu)\ast\mu)}{\d (\nu\ast\mu)}\Big\|_{L^2(\nu\ast\mu)}
    \le\Big\|\frac{\d(b\nu)}{\d \nu}\Big\|_{L^2(\nu)}\Big\|\frac{\d\mu}{\d \mu}\Big\|_{L^2(\mu)}
    =\|b\|_{L^2(\nu)}.
  \]
  \fi
  Finally, a similar calculation as \eqref{eqCondExp} shows $A_\nu b=\E_0[b(X)|X+\eps]=\frac{\d((b\nu)\ast\mu)}{\d P_{\nu}}$.
\end{proof}

Lemma~\ref{lemRegDecon} shows that Assumption~\ref{assRegularity} is satisfied and thus Lemma~\ref{lem:hellinger} yields regularity of the deconvolution model with rate $r_n=1/\sqrt n$. In order to apply Corollary~\ref{corScore}, we have to determine the adjoint of the score operator. For any $g\in L_0^2(P_\nu)$ and any $b\in \dot\Theta_\theta\subset L^2(\nu)$ the map $\R^2\ni (x,y)\mapsto g(x+y)b(x)$ is $\nu\otimes\mu$-integrable due to the Cauchy--Schwarz inequality. $(b\nu)\ast\mu\ll P_\nu$ and Fubini's theorem thus yield
\begin{align*}
  \scapro{A_\nu b}g_{P_\nu}
  &=\int(A_\nu b)g\d P_\nu
  =\int g\d\big((b\nu)\ast\mu\big)\\
  &=\int\int g(x+y)b(x)\nu(\d x)\mu(\d y)
  =\scapro{\mu(-\cdot)\ast g}b_\nu.
\end{align*}
Noting that Jensen's inequality shows
\[
  \|\mu(-\cdot)\ast g\|_{L^2(\nu)}^2
  =\int\Big(\int g(x+y)\mu(\d y)\Big)^2\nu(\d x)
  \le\int g^2\d(\nu\ast\mu)
  =\|g\|_{L^2(P_\nu)}^2
\]
and that $\int (\mu(-\cdot)\ast g)\d\nu=\int g\d(\nu\ast\mu)=0$, the adjoint score operator equals
\begin{equation}\label{eqAstarDecon}
  A_\nu^\star:\quad L^2_0(P_\nu)\to\dot\Theta_\nu,\quad
  g\mapsto \mu(-\cdot)\ast g.
\end{equation}
Under weak conditions on the measures $\nu$ and $\mu$ we conclude the following convolution theorem. Thereby we extend the definition of the Fourier transform to finite measure $\mu$ on the Borel measurable space $(\R,\mathscr B(\R))$ by $\F\mu(u)=\int e^{iux}\mu(\d x)$.
\begin{theorem}\label{thmDecon}
  In the deconvolution model~\eqref{eqDeconModel} suppose that $\phi_\eps(u):=\E[e^{iu\eps_1}]=\F\mu(u)\neq0$ for all $u\in\R$ and that $\nu$ admits a Lebesgue density. Then $A_\nu^\star$ as given in \eqref{eqAstarDecon} is injective. Let moreover $\zeta^{(1)},\dots,\zeta^{(d)}\in L^2(\nu)$ satisfy $\zeta^{(j)}\in\ran A_\nu^\star$ for $j=1,\dots,d$ and $d\in\N$. Then the limit distribution of any estimator of the parameter $(\int\zeta^{(1)}\d\nu,\dots,\int\zeta^{(d)}\d\nu)$ which is regular with respect to the rate $n^{-1/2}$ equals $\mathcal N(0,\Sigma)\ast M$ for some Borel probability measure $M$ and with covariance matrix $\Sigma\in\R^{d\times d}$ given by
  \begin{equation}\label{eqInfoBoundDeconv}
    \Sigma_{j,k}=\int\Big((A_\nu^\star)^{-1}\zeta^{(j)}\Big)\Big((A_\nu^\star)^{-1}\zeta^{(k)}\Big)\d P_\nu-\Big(\int\zeta^{(j)}\d\nu\Big)\Big(\int\zeta^{(k)}\d\nu\Big)
  \end{equation}  
  for $j,k=1,\dots,d$.
\end{theorem}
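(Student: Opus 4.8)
The plan is to verify the hypotheses of Corollary~\ref{corScore} for the deconvolution model and then translate the abstract conclusion into the explicit formula \eqref{eqInfoBoundDeconv}. By Lemma~\ref{lemRegDecon} the model satisfies Assumption~\ref{assRegularity} with tangent set $\dot\Theta_\nu=L_0^2(\nu)$ and score operator $A_\nu$ from \eqref{eqADecon}, and the computation preceding the theorem identifies the adjoint $A_\nu^\star g=\mu(-\cdot)\ast g$ as in \eqref{eqAstarDecon}; moreover $\chi(\nu)=\int\zeta\d\nu$ was shown above to be pathwise differentiable with gradient $\tilde\chi_\nu=\zeta-\int\zeta\d\nu$. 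So the only genuinely new things to prove are (a) injectivity of $A_\nu^\star$ under the stated Fourier assumptions, and (b) that the abstract information bound $\langle(A_\nu^\star)^\dagger\tilde\chi_\nu^{(j)},(A_\nu^\star)^\dagger\tilde\chi_\nu^{(k)}\rangle_{P_\nu}$ equals the right-hand side of \eqref{eqInfoBoundDeconv}.

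For injectivity, I would argue in the Fourier domain. If $A_\nu^\star g=\mu(-\cdot)\ast g=0$ in $L^2(\nu)$, then the finite signed measure $(g\nu)\ast\mu(-\cdot)$ vanishes $\nu$-a.e.; since $\nu$ has a Lebesgue density, $g\nu$ is an integrable (finite signed) measure and its convolution with $\mu(-\cdot)$ has Fourier transform $\F(g\nu)(u)\cdot\overline{\phi_\eps(u)}$, which must vanish for all $u$ (the convolution being zero as a measure, because $g\nu\ll\nu\ll\mathrm{Leb}$ forces the convolution to be the zero element of $L^1$, hence the zero measure). The assumption $\phi_\eps(u)\neq0$ for all $u$ then forces $\F(g\nu)\equiv0$, so $g\nu=0$ as a measure, hence $g=0$ $\nu$-a.e. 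The mild subtlety here is the integrability bookkeeping: $g\in L_0^2(P_\nu)$ need not give $g\nu$ finite unless one first restricts to the $\nu$-a.e. class, but $g\in L^2(\nu)\subset L^1(\nu)$ makes $g\nu$ a finite signed measure, so this is routine.

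Once $A_\nu^\star$ is injective, its Moore--Penrose pseudoinverse on $\ran A_\nu^\star$ is simply $(A_\nu^\star)^{-1}$, so the hypothesis $\zeta^{(j)}\in\ran A_\nu^\star$ is exactly the regularity condition of Corollary~\ref{corScore} (note $\zeta^{(j)}\in\ran A_\nu^\star\iff\tilde\chi_\nu^{(j)}=\zeta^{(j)}-\int\zeta^{(j)}\d\nu\in\ran A_\nu^\star$, since constants lie in $\ran A_\nu^\star$ — indeed $A_\nu^\star 1=1$; one should remark on this so the two formulations match). Corollary~\ref{corScore} then gives that the limit law of any regular estimator of $(\int\zeta^{(1)}\d\nu,\dots,\int\zeta^{(d)}\d\nu)$ is $\mathcal N(0,\Sigma)\ast M$ with $\Sigma_{j,k}=\langle(A_\nu^\star)^{-1}\tilde\chi_\nu^{(j)},(A_\nu^\star)^{-1}\tilde\chi_\nu^{(k)}\rangle_{P_\nu}$. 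It remains to expand this: writing $(A_\nu^\star)^{-1}\tilde\chi_\nu^{(j)}=(A_\nu^\star)^{-1}\zeta^{(j)}-c_j$ where $c_j=\int\zeta^{(j)}\d\nu$ (using $A_\nu^\star c_j=c_j$, i.e.\ $(A_\nu^\star)^{-1}c_j=c_j$), one gets $\Sigma_{j,k}=\int\big((A_\nu^\star)^{-1}\zeta^{(j)}\big)\big((A_\nu^\star)^{-1}\zeta^{(k)}\big)\d P_\nu - c_jc_k$ after using $\int(A_\nu^\star)^{-1}\zeta^{(j)}\d P_\nu=c_j$, which holds because the first component of $(A_\nu^\star)^{-1}\zeta^{(j)}$ along $\lin 1$ is $c_j$. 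That is exactly \eqref{eqInfoBoundDeconv}.

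The main obstacle I anticipate is purely bookkeeping: being careful about the splitting $L^2(P_\nu)=\lin 1\oplus L_0^2(P_\nu)$ and the fact that $A_\nu^\star$ as written maps into $\dot\Theta_\nu=L_0^2(\nu)$, so strictly speaking one works with the restriction to $L_0^2$ and then re-introduces the constant parts by hand to pass from the abstract $\tilde\chi_\nu^{(j)}$-formula to the $\zeta^{(j)}$-formula; one must check $A_\nu^\star$ sends constants to constants and computes inner products consistently. There is no hard analytic estimate needed beyond the Fourier argument for injectivity, since Lemma~\ref{lemRegDecon} and the pre-theorem computation have already supplied the score operator, its adjoint, and the Hellinger differentiability.
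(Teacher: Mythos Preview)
Your overall architecture matches the paper's proof: invoke Lemma~\ref{lemRegDecon} for Hellinger regularity, use the pathwise derivative $\tilde\chi_\nu^{(j)}=\zeta^{(j)}-\int\zeta^{(j)}\d\nu$, observe that $A_\nu^\star$ fixes constants so that $\zeta^{(j)}\in\ran A_\nu^\star$ is equivalent to $\tilde\chi_\nu^{(j)}\in\ran A_\nu^\star$, and then read off the efficient influence function $(A_\nu^\star)^{-1}\zeta^{(j)}-\int\zeta^{(j)}\d\nu$ from Corollary~\ref{corScore}. Your expansion of $\Sigma_{j,k}$ via $\int(A_\nu^\star)^{-1}\zeta^{(j)}\d P_\nu=\int\zeta^{(j)}\d\nu$ is correct and slightly more explicit than the paper, which simply writes down the influence function and appeals to Theorem~\ref{thmScore}.

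The injectivity argument, however, has a genuine gap and differs from the paper's route. You pass from $A_\nu^\star g=\mu(-\cdot)\ast g=0$ in $L^2(\nu)$ to the measure $(g\nu)\ast\mu(-\cdot)$ and then take its Fourier transform. Two problems arise. First, $A_\nu^\star g$ is the function $x\mapsto\int g(x+y)\mu(\d y)$, whereas $(g\nu)\ast\mu(-\cdot)$ has Lebesgue density $x\mapsto\int g(x+y)f_\nu(x+y)\mu(\d y)$ with $f_\nu$ the density of $\nu$; these are not the same object, so $A_\nu^\star g=0$ in $L^2(\nu)$ does not say that the measure $(g\nu)\ast\mu(-\cdot)$ vanishes. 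Second, and more basic, your claim ``$g\in L^2(\nu)\subset L^1(\nu)$'' is unjustified: you only know $g\in L^2_0(P_\nu)$, and $L^2(P_\nu)$ need not embed into $L^2(\nu)$ (take e.g.\ $\mu=\delta_a$ with $a\neq0$, so $P_\nu$ and $\nu$ have disjoint supports). So $g\nu$ need not even be a finite signed measure.

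The paper avoids both issues by taking the Fourier transform of $g$ itself, not of $g\nu$. It restricts to the dense subset $\mathcal G=L^2(\R)\cap L_0^2(P_\nu)$, where $\mu(-\cdot)\ast g\in L^2(\R)$ by Young's inequality and $\F[\mu(-\cdot)\ast g]=\phi_\eps(-\cdot)\F g$ is available directly; then $\phi_\eps\neq0$ forces $\F g=0$, hence $g=0$. Injectivity on all of $L^2_0(P_\nu)$ is then concluded from the kernel of the bounded operator $A_\nu^\star$ being closed together with density of $\mathcal G$. If you want to repair your argument along Fourier lines, this restriction-to-a-dense-subset device is the missing ingredient.
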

\begin{proof}
  Let us first show that on the assumptions the adjoint operator $A_\nu^\star$ is injective. Since $\nu$ admits a Lebesgue density, the equivalence classes with respect to the Lebesgue measure embed into the equivalence classes with respect to $\nu$ and with respect to $P_\nu$. Hence, we can consider the subset $\mathcal G:=L^2(\R)\cap L^2_0(P_\nu)$, which is dense in $L^2_0(P_\nu)$ (cf. Section~\ref{secProofAdjoint} (ii)). Since the kernel of the continuous operator $A_\nu^\star$ is closed, it is sufficient to show that the restricted operator $A_\nu^\star|_{\mathcal G}$ is injective. For any $g\in\mathcal G$ it holds $0=A_\nu^\star g=\mu(-\cdot)\ast g$ if and only if $0=\F[\mu(-\cdot)\ast g]=\phi_\eps(-\cdot)\F g$ which is equivalent to $\F g=0$ since $|\phi_\eps|>0$ by assumption. Hence, the kernel of $A_\nu^\star$ equals $\{0\}$.

  To infer the information bound, recall that the gradient of the linear functional $\int\zeta^{(j)}\d \nu$ is $\tilde\chi_\nu^{(j)}=\zeta^{(j)}-\int\zeta^{(j)}\d\nu$. Furthermore, note that $\zeta^{(j)}\in\ran A_\nu^\star$ implies $\tilde\chi_\nu^{(j)}\in\ran A_\nu^\star$ since $A_\nu^\star a=\mu(-\cdot)\ast a=\int a\d\mu=a$ for any real number $a\in\R$. Therefore, Lemma~\ref{lemRegDecon}, Corollary~\ref{corScore} and the injectivity of $A_\nu^\star$ yield the vector of efficient influence functions
  \[
    \tilde\psi_{P_\nu}^{(j)}=(A_\nu^\star)^{-1}\tilde\chi_\nu^{(j)}=(A_\nu^\star)^{-1}\zeta^{(j)}-\int\zeta^{(j)}\d\nu,\quad j=1,\dots,d,
  \]
  and the assertion follows from Theorem~\ref{thmScore}. 
\end{proof}

\begin{remark}
  The assumption $\phi_\eps(u)\neq0, u\in\R,$ is not sufficient for the injectivity of $A_\nu^\star$ as the following counterexample shows: Let $\nu=\delta_0$ be the Dirac measure in zero and  $\mu=\mathcal N(0,1)$ be standard normal such that $P_\nu=\nu\ast\mu=\mathcal N(0,1)$. Consider $g\in L^2_0(P_\nu)$ with $g(x)=x^3, x\in\R$. Then, $A_\nu^\star g(x)=\E[(x+\eps_1)^3]$ is zero at the origin. Hence, $A_\nu^\star g=0$  $\nu$-a.s. and $0\neq g\in\ker A_\nu^\star$. 
  A sufficient condition for $A_\nu^\star$ being injective is given in Theorem~\ref{thmDecon} by assuming additionally a Lebesgue density of $\nu$, which is a natural assumption. In particular, we obtain $\overline\ran A_\nu=L^2_0(P_\nu)$ implying that the tangent space $\dot{\mathcal P}_{P_\nu}=A_\nu\dot\Theta_\nu$ is dense in the set of all score functions. Without injectivity the convolution theorem remains true if the inverse $(A_\nu^\star)^{-1}$ in ~\eqref{eqInfoBoundDeconv} is replaced by the Moore--Penrose pseudoinverse $(A_\nu^\star)^\dagger$. 
\end{remark}

In view of Theorem~\ref{thmScore} our condition $\zeta^{(j)}\in\ran A_\nu^\star, j=1,\dots,d,$ is necessary and sufficient for the regularity of the parameter. The remaining question is under which conditions $\ind_{(-\infty,t]}\in\ran A_\nu^\star, t\in\R,$ and how the pre-image looks like in this case. Applying the approach by \citet{nicklReiss2012}, we will give an answer in the more involved L\'evy setting. Under certain assumptions on $\mu$ it can be carried over to the deconvolution case and leads to a similar result, but with weaker conditions on $\mu$ than by \citet{soehlTrabs2012} (see Remark~\ref{exDecon3} below). In particular, these conditions will imply that $\ind_{(-\infty,t]}$ can be estimated with parametric rate.  

\section{Application to L\'evy processes}\label{secLevy}

\subsection{Setting and regularity}
Recall that a L\'evy process $(L_t)_{t\ge0}$ is a stochastic process which is stochastically continuous with $L_0=0$ and which has stationary and independent increments. Let $(L_t)$ be real-valued. For some distance $\Delta>0$ we observe the L\'evy process at equidistant time points $t_k=\Delta k$ with $k=0,\dots,n$. In the so called low-frequency regime $\Delta$ remains fixed as $n$ goes to infinity. The L\'evy process is uniquely determined by its characteristic triplet consisting of the volatility $\sigma^2\ge0$, of the drift parameter $\gamma\in\R$ and of the L\'evy or jump measure $\nu$ on $(\R,\mathscr B(\R))$ which satisfies $\int_{\R}(|x|^2\wedge1)\nu(\d x)<\infty$ and $\nu(\{0\})=0$ \citep[cf.][Chap. 2]{sato1999}. Our aim is to derive a convolution theorem for the estimation of the linear functional of the jump measure
\begin{equation}\label{eqFunctional}
  \chi(\nu):=\int\zeta\d\nu\quad\text{for }\zeta\in L^1(\nu)\cap L^2(\nu).
\end{equation} 
If $\zeta$ is $\R^d$-valued, the scalar product in \eqref{eqFunctional} has to be interpreted coordinatewise. As a relevant example the reader should have in mind the generalized distribution function of $\nu$. It corresponds to $\zeta=\ind_{(-\infty,t]}$ for $t<0$ and $\zeta=\ind_{[t,\infty)}$ for $t>0$. This adaptation of the standard distribution function is necessary owing to the possibly existing singularity of $\nu$ at the origin. In order for the estimation of $\chi(\nu)$ to be possible with parametric rate, we restrict on processes with finite variation in view of the lower bounds by \citet{neumannReiss2009}. That means $\sigma^2=0$ and $\int_{\R}(|x|\wedge1)\nu(\d x)<\infty$ are assumed. For a recent review on the statistical inference on L\'evy processes we refer to \citet{reiss2013}.

Due to the stationary and independent increments of $(L_t)$, the random variables $Y_k:=L_{\Delta k}-L_{\Delta(k-1)}, k=1,\dots,n,$ are independent and identically distributed. Their characteristic function is given by the L\'evy-Khintchine formula
\begin{equation}\label{eqLevyKhintchine}
  \phi_\nu(u)=\E\big[e^{iuL_\Delta}\big]
  =\exp\Big(\Delta\big(i\gamma u+\int(e^{iux}-1)\nu(\d x)\big)\Big).
\end{equation}
Fixing the drift $\gamma$, the model is given by
\begin{align}\label{eqModel}
  \mathcal P&=\big\{P_\nu=\F^{-1}\phi_\nu|\nu\in\Theta\big\}\quad\text{with}\\
  \Theta&=\Big\{\nu \text{ jump measure on }(\R,\mathscr B(\R))\Big|\int(|x|\wedge1)\nu(\d x)<\infty\Big\}.\notag
\end{align}

Compared to tangents at the set of probability measures in Example~\ref{exMaxTang}, directions for the L\'evy measures do not need to be centered since L\'evy measures are not normalized. In general, jump measures are even not finite such that $L^2(\nu)$, which gives the Hilbert space structure, is still too large. We should intersect with $L^1(\nu)$ to include linear functionals as \eqref{eqFunctional}. Hence, we define the tangent space at $\nu\in\Theta$ as 
\begin{equation}\label{eqTangentSpace}
  \dot\Theta_\nu:=L^1(\nu)\cap L^2(\nu)
  =\lin\dot\Theta_\nu.
\end{equation}
Using the function $k(y)=2/(1+e^{-2y})$ from Example~\ref{exMaxTang}, for any $b\in\dot\Theta_\nu$ the path $[0,1)\ni t\mapsto\nu_t$ with $\frac{\d\nu_t}{\d\nu}(x)=k\big(tb(x)\big)$ is contained in $\Theta$ and satisfies
\[
  b(x)=\frac{\partial}{\partial t}\Big|_{t=0}\log\Big(\frac{\d\nu_t}{\d\nu}(x)\Big).
\]
On this path the derivative of the functional~\eqref{eqFunctional} can be calculated with use of dominated convergence, noting that $|k(tb)-1|\le t|b|\in L^2(\nu)$. Hence,
\begin{align*}
  \lim_{t\to0}t^{-1}(\chi(\nu_t)-\chi(\nu))
  &=\lim_{t\to0}\int\zeta(x)t^{-1}\Big(\frac{\d \nu_t}{\d\nu}(x)-1\Big)\d\nu(x)\\
  &=\int\zeta(x)b(x)\nu(\d x)
  =\scapro\zeta b_\nu=:\dot\chi_\nu b
\end{align*}
and thus the gradient is given by $\tilde\chi_\nu=\zeta$. Compared to the deconvolution setting, we do not need to center $\tilde\chi_\nu$ because the total mass of the L\'evy measure is allowed to change along the path. 

To apply Corollary~\ref{corScore}, we need to verify Assumption~\ref{assRegularity} for the L\'evy model. By the L\'evy--Khintchine representation the laws $P_{\nu_t}$ satisfy
\begin{align}
  P_{\nu_t}&=\F^{-1}\Big[\exp\Big(\Delta\Big(i\gamma u+\int(e^{iux}-1)\nu_t(\d x)\Big)\Big)\Big]\notag\\
  &=\F^{-1}\Big[\exp\Big(\Delta\int(e^{iux}-1)\Big(k\big(tb(x)\big)-1\Big)\nu(\d x)\Big)\phi_\nu(u)\Big].\label{eqDens}
  %&=\F^{-1}\Big[\exp\Big(\Delta\xi\int(e^{iux}-1)g(x)\d x\Big)\Big]\ast\PP
\end{align}
Owing to $(k(tb)-1)\in L^1(\nu)$, the measure $P_{\nu_t}$ is a convolution of $P_\nu$ and a compound Poisson type measure with signed jump measure $\Delta (k(tb)-1)\d\nu$. To see that the submodel $t\mapsto P_{\nu_t}$ is dominated, we check that the Hellinger distance of the jump measures $\int(\sqrt{\d\nu_t}-\sqrt{\d\nu})^2=\int(\sqrt{k(tb(x))}-1)^2\nu(\d x)$ is finite for all $t$. Since the drift $\gamma$ remains constant, Theorem 33.1 in \cite{sato1999} yields absolute continuity of $P_{\nu_t}$ with respect to $P_\nu$, denoted as $P_{\nu_t}\ll P_\nu$, for all $t$. To find the Hellinger derivative of the path $t\mapsto \frac{\d P_{\nu_t}}{\d P_\nu}$ at $t=0$, we note by dominated convergence
\begin{align*}
  &\F^{-1}\Big[\frac{\partial}{\partial t}\Big|_{t=0}\exp\Big(\Delta\int(e^{iux}-1)\big(k\big(tb(x)\big)-1\big)\nu(\d x)\Big)\phi_\nu(u)\Big]\\
  =&\Delta\F^{-1}\Big[\phi_\nu(u)\Big(\int(e^{iux}-1)b(x)\nu(\d x)\Big)\Big]\\
  =&\Delta\Big(P_\nu\ast(b\nu)-\int b(x)\nu(\d x)P_\nu\Big).
\end{align*}
This indicates how the score operator should look like. The following proposition determines the score operator $A_\nu$ and shows Hellinger regularity of the parametric submodel $t\mapsto P_{\nu_t}$. This is the key result to apply the theory of Section~\ref{secLAN} to the L\'evy model.
\begin{proposition}\label{propRegularity}
  Let the model $\mathcal P$ be given by \eqref{eqModel} with the tangent space $\dot\Theta_\nu$ at $\nu\in\Theta$ as defined in \eqref{eqTangentSpace}. Then $P_\nu\ast(b\nu)\ll P_\nu$ for all $b\in \dot\Theta_\nu\cap L^\infty(\nu)$. Moreover, the linear operator 
  \begin{equation}\label{eqScoreOp}
    A_\nu \big|_{\dot\Theta_\nu\cap L^\infty(\nu)}:\dot\Theta_\nu\cap L^\infty(\nu)\to L^2_0(P_\nu),\quad b\mapsto\Delta\frac{\d \big(P_\nu\ast(b\nu)\big)-\big(\int b\d\nu\big)\d P_\nu}{\d P_\nu}
  \end{equation}
  is bounded. $\dot\Theta_\nu\cap L^\infty(\nu)$ is dense in $\dot\Theta_\theta$ and thus $A_\nu: \dot\Theta_\nu\to L^2_0(P_\nu)$ can be defined as its unique continuous extension. Then for all $b\in\dot\Theta_\nu$ the associated submodel $[0,1)\ni t\mapsto P_{\nu_t}$ is Hellinger differentiable at zero with derivative $A_\nu b$, that means \eqref{eqHellingerDeriv} is fulfilled.
\end{proposition}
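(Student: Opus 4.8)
The proof splits into three tasks: (a) show that $P_\nu\ast(b\nu)\ll P_\nu$ and identify the score; (b) establish the $L^2(\nu)$--bound on $A_\nu$ and the density of $\dot\Theta_\nu\cap L^\infty(\nu)$ in $\dot\Theta_\nu$, so that the continuous extension exists; and (c) prove the Hellinger expansion~\eqref{eqHellingerDeriv}, after which Assumption~\ref{assRegularity}, Lemma~\ref{lem:hellinger} and Corollary~\ref{corScore} apply. For (a) I would pass to the canonical path space of the finite--variation process, where $L_\Delta=\gamma\Delta+\int_{(0,\Delta]\times\R}x\,N(\d s,\d x)$ with $N$ the Poisson random measure of jumps of intensity $\d s\,\nu(\d x)$. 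Unwinding the convolution and applying Mecke's equation to $h((s,x),N)=\Delta^{-1}g(L_\Delta)b(x)$, for bounded measurable $g$, gives
\[
  \int g\d\big(P_\nu\ast(b\nu)\big)
  =\E_\nu\Big[\int g(L_\Delta+x)b(x)\nu(\d x)\Big]
  =\frac1\Delta\E_\nu\Big[g(L_\Delta)\sum_{0<s\le\Delta}b(\Delta L_s)\Big],
\]
valid for every $b\in L^1(\nu)$. This already exhibits $P_\nu\ast(b\nu)\ll P_\nu$ with density $y\mapsto\Delta^{-1}\E_\nu[\sum_{0<s\le\Delta}b(\Delta L_s)\mid L_\Delta=y]$, and, since $\E_\nu[\sum_{0<s\le\Delta}b(\Delta L_s)]=\Delta\int b\d\nu$ by Campbell's formula, the score operator~\eqref{eqScoreOp} reads $A_\nu b=\E_\nu[\sum_{0<s\le\Delta}b(\Delta L_s)-\Delta\int b\d\nu\mid L_\Delta]$. (Staying closer to the stated formulation, absolute continuity for bounded $b$ also follows by writing $b=b^+-b^-$, noting that $P_{\nu+b^\pm\nu}$ is the convolution of $P_\nu$ with the compound Poisson law of L\'evy measure $\Delta b^\pm\nu$ through~\eqref{eqLevyKhintchine}, invoking \citet[Thm.~33.1]{sato1999}, and extracting the first convolution term.)

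For (b), conditional Jensen and Campbell's variance formula give
\[
  \|A_\nu b\|_{L^2(P_\nu)}^2\le\E_\nu\Big[\Big(\sum_{0<s\le\Delta}b(\Delta L_s)-\Delta\int b\d\nu\Big)^2\Big]=\Delta\int b^2\d\nu=\Delta\|b\|_{L^2(\nu)}^2,
\]
and $\int A_\nu b\d P_\nu=0$, so $A_\nu$ maps $\dot\Theta_\nu$ into $L^2_0(P_\nu)$ with operator norm at most $\sqrt\Delta$. For the density, the truncations $b_m:=\sign(b)(|b|\wedge m)$ of $b\in\dot\Theta_\nu=L^1(\nu)\cap L^2(\nu)$ lie in $\dot\Theta_\nu\cap L^\infty(\nu)$ and converge to $b$ both in $L^1(\nu)$ and in $L^2(\nu)$ by dominated convergence; hence $\dot\Theta_\nu\cap L^\infty(\nu)$ is dense and $A_\nu$ extends uniquely and continuously, the extension coinciding with the conditional--expectation formula above.

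The substantial part is (c). For bounded $b$ I would use the L\'evy--process likelihood ratio \citep[Thm.~33.2]{sato1999}: on $\sigma(L_s:s\le\Delta)$, $\ell_t:=\d P_{\nu_t}/\d P_\nu=\exp(U_t)$ with $U_t=\sum_{0<s\le\Delta}\log k(tb(\Delta L_s))-\Delta\int(k(tb(x))-1)\nu(\d x)$. Since $k(0)=k'(0)=1$, Taylor expansion gives $U_t=tW+\tilde r_t$, with $W:=\sum_{0<s\le\Delta}b(\Delta L_s)-\Delta\int b\d\nu$ and $|\tilde r_t|\lesssim t^2(\int b^2\d N+\Delta\int b^2\d\nu)$; as $b\in L^\infty(\nu)$ all moments $\int|b|^j\d\nu$ and exponential moments $\E_\nu\exp(c\int|b|\d N)$ are finite, so (by the exponential formula for Poisson integrals) $\ell_t=1+tW+r_t$ with $W\in L^2(P_\nu)$ and $\|r_t\|_{L^2(P_\nu)}=O(t^2)$. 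Conditioning on $L_\Delta$ and using Jensen, $p_t:=\E_\nu[\ell_t\mid L_\Delta]=1+tA_\nu b+\E_\nu[r_t\mid L_\Delta]$ with $\|\E_\nu[r_t\mid L_\Delta]\|_{L^2(P_\nu)}=o(t)$, hence $t^{-1}(p_t-1)\to A_\nu b$ in $L^2(P_\nu)$. Passing to square roots, the identity $\frac{\sqrt{p_t}-1}{t}-\frac{p_t-1}{2t}=-\frac t2(\frac{\sqrt{p_t}-1}{t})^2$ together with $|\sqrt{p_t}-1|\le|p_t-1|$ and the $L^4(P_\nu)$--boundedness of $t^{-1}(p_t-1)$ (again from boundedness of $b$) forces the correction to $0$ in $L^2(P_\nu)$, yielding~\eqref{eqHellingerDeriv} for bounded $b$.

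For general $b\in\dot\Theta_\nu$ I would approximate by $b_m$ and estimate, writing $p_t^{(b)},p_t^{(b_m)}$ for the likelihood ratios of the submodels built from $b,b_m$, $P_{\nu_t},P_{\nu_t^{(m)}}$ for their laws, and $H$ for the Hellinger distance,
\[
  \Big\|\tfrac{\sqrt{p_t^{(b)}}-1}{t}-\tfrac12 A_\nu b\Big\|_{L^2(P_\nu)}
  \le\frac{\sqrt2}{t}H\big(P_{\nu_t},P_{\nu_t^{(m)}}\big)
  +\Big\|\tfrac{\sqrt{p_t^{(b_m)}}-1}{t}-\tfrac12 A_\nu b_m\Big\|_{L^2(P_\nu)}
  +\tfrac12\|A_\nu(b-b_m)\|_{L^2(P_\nu)}.
\]
The middle term vanishes as $t\to0$ by the bounded case, the last as $m\to\infty$ by (b), and the first is controlled uniformly in small $t$ by \citet{Liese1987}'s estimate for the Hellinger distance of infinitely divisible laws with common drift, which bounds $H(P_{\nu_t},P_{\nu_t^{(m)}})$ by a multiple of $(\int(\sqrt{k(tb)}-\sqrt{k(tb_m)})^2\d\nu)^{1/2}\le Lt\|b-b_m\|_{L^2(\nu)}$, since $y\mapsto\sqrt{k(y)}$ is globally Lipschitz; sending $t\to0$ and then $m\to\infty$ gives~\eqref{eqHellingerDeriv}. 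The step I expect to be the main obstacle is (c): obtaining the $o(t)$ control of $r_t$ from the limited moment information (for unbounded $b$ the moments are not available at all, whence the truncation detour), and, above all, invoking \citet{Liese1987}'s distance estimate in exactly the form required --- with the natural drift $\gamma$ fixed, so that no spurious drift term enters the bound on $H(P_{\nu_t},P_{\nu_t^{(m)}})$. I would expect the author to organise (c) through the Fourier--multiplier technique of \citet{nicklReiss2012} applied to the characteristic functions in~\eqref{eqDens} rather than the Girsanov/Mecke bookkeeping, which additionally sidesteps the atom that $P_\nu$ carries at $\gamma\Delta$ when $\nu$ is finite and which makes direct $L^2(P_\nu)$--estimates delicate.
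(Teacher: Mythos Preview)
Your argument is correct and constitutes a genuinely different route from the paper's. You work on path space via the Poisson random measure: Mecke's equation gives absolute continuity and the conditional--expectation form of $A_\nu b$ at once, Campbell's variance formula yields the operator bound $\|A_\nu b\|_{L^2(P_\nu)}\le\sqrt\Delta\,\|b\|_{L^2(\nu)}$ directly, and for bounded $b$ you expand the continuous--time likelihood ratio from \citet[Thm.~33.2]{sato1999}, then project onto $\sigma(L_\Delta)$. The paper instead stays on $\R$: it writes $P_{\nu_t}$ as the convolution exponential $P_\nu\ast e^{-\nu_t^c(\R)}\sum_k(\nu_t^c)^{\ast k}/k!$ with $\d\nu_t^c=(k(tb)-1)\d\nu$, bounds the $L^2(P_\nu)$--remainder term by term, and closes it with Liese's estimate on the order--2 Hellinger integral $H_2$; absolute continuity is deduced \emph{a posteriori} from $\int A_\nu|b|\,\d P_\nu=0$, and the operator bound is extracted from Hellinger differentiability combined with Liese's Hellinger--distance inequality (so the paper needs Liese twice, for $\alpha=2$ and $\alpha=1/2$). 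Your approach is more conceptual and recovers the operator bound with the sharp constant without any detour; the paper's is more elementary in that it avoids path--space constructions. For the extension to unbounded $b$ both proofs use the same three--term decomposition and the same Liese Hellinger--distance estimate --- indeed the paper's Remark after Proposition~\ref{propRegularity} outlines exactly your strategy as an alternative. Your closing guess that the author organises (c) through the Fourier--multiplier machinery of \citet{nicklReiss2012} is off: that technique enters only later, in Lemma~\ref{lemAdjoint} and Proposition~\ref{propBound}, while the present proof is purely measure--theoretic.
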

The proof of this proposition is given in Section~\ref{secProofReg}. An essential ingredient is an estimate of the Hellinger integral of two infinitely divisible distributions by \citet{Liese1987}. More precisely, his results imply (for details see Section~\ref{secProofReg})
\begin{align}
  \int\Big(\frac{\d P_{\nu_t}}{\d P_\nu}\Big)^2\d P_\nu
  \le\exp\Big(\frac{1}{2}\int\big(\frac{\d\nu_t}{\d\nu}-1\big)^2\d\nu\Big)
  \le\exp\Big(\frac{1}{2}t^2\|b\|_{L^2(\nu)}^2\Big).\label{eqH2dist}
\end{align}
\begin{remark}
  Relying on the semimartingale structure of the model, an alternative strategy to prove Proposition~\ref{propRegularity} is as follows: Observing $L_\Delta$ is a sub-experiment of observing $(L_t)_{0\le t\le \Delta}$ in continuous time. Hence, by proving Hellinger differentiability of the latter model, we find a score process, say $(V_t)_{0\le t\le\Delta}$, and the score operator is then given by $A_\nu b=\E[V_\Delta|L_\Delta]$.

  Theorem 2.34 by \citet{jacod1990} yield local differentiability of the experiment with continuous observations corresponding to the score process
  \[  V_t=\sum_{s\le t}b(\Delta L_s)-t\int b\d \nu.\]
  Noting that the Hellinger processes are deterministic \citep[cf.][Rem. IV.1.25]{jacodShiryaev2003}, local differentiability implies Hellinger differentiability, see \cite{jacod1989}.\footnote{Thanks to an anonymous referee for pointing out this approach.}
\end{remark}

Proposition~\ref{propRegularity} shows that the L\'evy model $\mathcal P$, defined in \eqref{eqModel} equipped with the tangent space $\dot\Theta_\nu$, given in \eqref{eqTangentSpace} satisfies Assumption~\ref{assRegularity}. In particular, it is a regular indirect model at any $\nu\in\Theta$ with respect to the rate $n^{-1/2}$ by Lemma~\ref{lem:hellinger}. Having in mind the regularity Lemma~\ref{lemRegDecon} in the deconvolution model, the score operators look very similar in both models. Since the gradient does not have to integrate to zero in the L\'evy model, the centering is incorporated in the operator $A_\nu$. Apart from that the convolution structure is the same. Therefore, the L\'evy model can locally be weakly approximated with a linear white noise model whose operator is of convolution type.

%After identifying the adjoint score operator, the main challenge is to determine the range appropriately.

By Proposition~\ref{propRegularity} the score operator $A_\nu$ is characterized by \eqref{eqScoreOp}. To prove information bounds, we will combine this result with Proposition~\ref{propApprox} which shows that it is sufficient to study $A_\nu^\star$ on a nicely chosen, dense subset of $L_0^2(P_\nu)$. Then Theorem~\ref{thmScore} provides the convolution theorem for all $\zeta\in\ran A_\theta^\star$. In the following we will discuss L\'evy processes with finite and infinite jump activity separately because the analytical properties of the score operator are quite different: In the compound Poisson case the inverse adjoint score operator can be explicitly expressed as a convolution with a finite signed measure. If the jump intensity is infinite, the distribution $P_\nu$ possesses a Lebesgue density and thus $A_\nu^\star$ will be a smoothing operator.

\subsection{Compound Poisson processes}
Let $(L_t)$ be a compound Poisson process with jump intensity $\lambda:=\nu(\R)<\infty$. Consequently, the tangent space simplifies to $\dot\Theta_\theta=L^2(\nu)$ and the measure $P_\nu$ can be written as the convolution exponential \citep[cf.][Rem. 27.3]{sato1999}
\begin{equation}\label{eqConExp}
  P_\nu=\delta_{\Delta\gamma}\ast\Big(e^{-\Delta\lambda}\sum_{k=0}^\infty\frac{\Delta^k}{k!}\nu^{\ast k}\Big),
\end{equation}
where $\delta_x$ denotes the Dirac measure in $x\in\R$. Define the subsets 
\begin{align}\label{eqSubsetsCP}
  \mathcal G&:=L^\infty(P_\nu)\cap L^2_0(P_\nu)\subset L^2_0(P_\nu)\quad\text{ and }\\
  \mathcal H&:=\big\{h\in L^2(\nu)\big|\sup_{k=0,1,\dots}\|h\|_{L^\infty(\nu^{\ast k})}<\infty\big\}\subset \dot\Theta_\nu  \notag
\end{align}
which are dense in $L^2_0(P_\nu)$ and $L^2(\nu)$, respectively. Let $g\in\mathcal G$ and $b\in\mathcal H$. By Proposition~\ref{propRegularity} we know $(b\nu)\ast P_\nu\ll P_\nu$ which implies $g\in L^\infty((b\nu)\ast P_\nu)$. Hence, $\int g\d P_\nu=0$ and Fubini's theorem yield
\begin{align}
  \scapro{A_\nu b}g_{P_\nu}
  =\int(A_\nu b)g\d P_\nu
  &=\Delta\int g\d\big(P_\nu\ast(b\nu)\big)-\Delta\Big(\int b\d\nu\Big)\Big(\int g\d P_\nu\Big)\notag\\
  &=\Delta\int\int g(x+y)b(x)\d\nu(x)\d P_\nu(y)\notag\\
  &=\Delta\scapro{P_\nu(-\cdot)\ast g}b_\nu.\label{eqFubini}
\end{align}
Therefore, the adjoint score operator on $\mathcal G$ is $A_\nu^\star|_{\mathcal G}:\mathcal G\to L^2(\nu), g\mapsto\Delta P_\nu(-\cdot)\ast g$. 
\begin{lemma}\label{lemCPadjoint}
  The map $A_\nu^\star:\mathcal G\to \mathcal H,g\mapsto P_\nu(-\cdot)\ast g$ is well defined.
\end{lemma}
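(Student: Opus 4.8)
The plan is to show that for every $g\in\mathcal G=L^\infty(P_\nu)\cap L^2_0(P_\nu)$ the convolution $A_\nu^\star g=\Delta\,P_\nu(-\cdot)\ast g$ actually lands in $\mathcal H$, i.e. that it is square-integrable with respect to $\nu$ and that its $L^\infty(\nu^{\ast k})$-norms are bounded uniformly in $k$. The membership in $L^2(\nu)$ is already available: this is precisely the content of the adjointness computation \eqref{eqFubini} together with the boundedness of $A_\nu$ from Proposition~\ref{propRegularity}, which gives $\|A_\nu^\star g\|_{L^2(\nu)}\le\|A_\nu\|\,\|g\|_{L^2(P_\nu)}$. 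So the real work is the uniform-in-$k$ supremum bound, which is what places $A_\nu^\star g$ in the smaller set $\mathcal H$ rather than merely in $\dot\Theta_\nu=L^2(\nu)$.

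The key step is a pointwise estimate. Writing $P_\nu(-\cdot)\ast g(x)=\int g(x+y)\,P_\nu(\d y)$, one bounds
\[
  \bigl|P_\nu(-\cdot)\ast g(x)\bigr|\le\int|g(x+y)|\,P_\nu(\d y)\le\|g\|_{L^\infty(P_\nu)},
\]
provided the translate $g(x+\cdot)$ is still $P_\nu$-essentially bounded by $\|g\|_{L^\infty(P_\nu)}$ — which holds whenever translation by $x$ maps $P_\nu$-null sets to $P_\nu$-null sets, or more simply whenever the bound $|g|\le\|g\|_{L^\infty(P_\nu)}$ holds $P_\nu$-a.e.\ and hence $(\delta_{-x}\ast P_\nu)$-a.e. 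Thus $\|P_\nu(-\cdot)\ast g\|_{L^\infty(\R)}\le\|g\|_{L^\infty(P_\nu)}$ outside a set that is null for the relevant measures. To get the bound against every $\nu^{\ast k}$ one then needs that the exceptional set is $\nu^{\ast k}$-null for all $k$; here one invokes the structure \eqref{eqConExp} of $P_\nu$ as $\delta_{\Delta\gamma}\ast e^{-\Delta\lambda}\sum_k\tfrac{\Delta^k}{k!}\nu^{\ast k}$, which shows that each $\nu^{\ast k}$ (shifted by $\Delta\gamma$) is absolutely continuous with respect to $P_\nu$, so a $P_\nu$-null set is $\nu^{\ast k}(\cdot-\Delta\gamma)$-null, and a further translation by the drift handles the shift. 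Consequently $\sup_k\|A_\nu^\star g\|_{L^\infty(\nu^{\ast k})}\le\Delta\|g\|_{L^\infty(P_\nu)}<\infty$, so $A_\nu^\star g\in\mathcal H$.

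I expect the main obstacle to be the careful bookkeeping of null sets: the inequality $|g|\le\|g\|_{L^\infty(P_\nu)}$ is an $L^\infty(P_\nu)$-statement, but to conclude $|P_\nu(-\cdot)\ast g(x)|\le\|g\|_{L^\infty(P_\nu)}$ for $\nu^{\ast k}$-a.e.\ $x$ one must argue that, for such $x$, the measure $\delta_{-x}\ast P_\nu$ assigns no mass to the $P_\nu$-null set where $|g|$ exceeds its essential sup. This is where the absolute continuity $\nu^{\ast k}\ast\delta_{\Delta\gamma}\ll P_\nu$ coming from \eqref{eqConExp} is essential, together with the fact that $P_\nu$ is invariant in form under its own convolutions (each summand $\nu^{\ast k}$ of the series appears in $P_\nu$). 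Once this measure-theoretic point is settled, the chain of inequalities closes immediately and the lemma follows; the $L^2(\nu)$-part needs no separate argument beyond \eqref{eqFubini}.
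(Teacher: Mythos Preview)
Your overall strategy is the paper's: bound $|P_\nu(-\cdot)\ast g(x)|\le\|g\|_{L^\infty(P_\nu)}$ for $\nu^{\ast k}$-a.e.\ $x$ by controlling the $P_\nu$-null set $A=\{|g|>\|g\|_{L^\infty(P_\nu)}\}$ under translation, and you correctly identify in the last paragraph that what must be shown is $P_\nu(A-x)=0$ for $\nu^{\ast k}$-a.e.\ $x$.

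However, the middle paragraph invokes the wrong absolute continuity. From \eqref{eqConExp} you read off $\delta_{\Delta\gamma}\ast\nu^{\ast k}\ll P_\nu$, and then say ``a further translation by the drift handles the shift''. This does not close the argument: knowing that $P_\nu$-null sets are $\nu^{\ast k}(\cdot-\Delta\gamma)$-null tells you nothing directly about the exceptional set $E=\{x:P_\nu(A-x)>0\}$, because you have not shown $E$ is $P_\nu$-null (and there is no reason it should be, nor does a drift translation rescue this). The absolute continuity actually needed is
\[
  \nu^{\ast l}\ast P_\nu\ll P_\nu\qquad\text{for every }l\ge0,
\]
and this is precisely what the paper establishes first (using your observation $\delta_{\Delta\gamma}\ast\nu^{\ast k}\ll P_\nu$ for \emph{all} $k$ as the key ingredient: $P_\nu(A)=0$ forces $\nu^{\ast m}(A-\Delta\gamma)=0$ for all $m$, hence each term in the expansion of $\nu^{\ast l}\ast P_\nu$ vanishes on $A$). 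Once $\nu^{\ast l}\ast P_\nu\ll P_\nu$ is in hand, Fubini gives
\[
  0=(\nu^{\ast l}\ast P_\nu)(A)=\int P_\nu(A-x)\,\nu^{\ast l}(\d x),
\]
so $P_\nu(A-x)=0$ for $\nu^{\ast l}$-a.e.\ $x$, which is exactly the null-set bookkeeping you flagged. Your phrase ``$P_\nu$ is invariant in form under its own convolutions'' is gesturing at this, but the actual step---passing from $\delta_{\Delta\gamma}\ast\nu^{\ast k}\ll P_\nu$ to $\nu^{\ast l}\ast P_\nu\ll P_\nu$ and then applying Fubini---needs to be written out; the ``translation by the drift'' line should be dropped.
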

This lemma is proved in Section~\ref{secProofCPajoint}. Although the centering of $g\in L^2_0(P_\nu)$ implies $A_\nu^\star g(0)=\int g\d P_\nu=0$, it does not cause an additional constraint owing to $\nu(\{0\})=0$. In general, $A_\nu^\star$ is not injective as the following example shows:
\begin{example}[Poisson process]
  Setting $\nu=\delta_1, \gamma=0$ and $\Delta=\lambda=1$, the law $P_\nu=e^{-1}\sum_{k=0}^\infty\delta_k/(k!)$ is the Poisson distribution and the adjoint score operator is given by 
  \[A_\nu^\star g(x)=e^{-1}\sum_{k=0}^\infty g(x+k)/(k!),\quad x\in\R.\]
  Consider the function $g=\ind_{\{0\}}-2\ind_{\{1\}}+2\ind_{\{2\}}$ which is a nonzero element of $\mathcal G$ by construction. However, $A_\nu^\star g(1)=0$ and thus $0\neq g\in\ker A_\nu^\star$ contradicting injectivity.
\end{example}
As in the deconvolution model we assume therefore that $\nu$ admits a Lebesgue density concluding injectivity of $A_\nu^\star$ exactly as in Theorem~\ref{thmDecon}. Since $|\phi_\nu(u)|=e^{\Delta \int(\cos(ux)-1)\nu(\d x)}\ge e^{-2\Delta\lambda}$ for all $u\in\R$ by \eqref{eqLevyKhintchine}, the inverse of $A_\nu^\star$ is then the deconvolution operator $h\mapsto \Delta^{-1}\F^{-1}[1/\phi_\nu(-\cdot)]\ast h$ with the finite signed measure
\[
  \F^{-1}[1/\phi_\nu(-\cdot)]= \delta_{-\Delta\gamma}\ast\Big(e^{\Delta\lambda}\sum_{k=0}^\infty\frac{(-\Delta)^k}{k!}\nu(-\cdot)^{\ast k}\Big)
\]
which is well defined on $\mathcal H$. In particular, the pre-image of the indicator function $\zeta=\ind_{(-\infty,t]}$ (or equivalently $\ind_{(-\infty,t]}\ind_{\R\setminus\{0\}}$) is well defined for any $t\in\R$. Consequently, Corollary~\ref{corScore} and Theorem~\ref{thmScore} yield 
\begin{corollary}\label{corInfoBoundCP}
  Let $(L_t)$ be a pure jump process of compound Poisson type with jump measure $\nu$ which is absolutely continuous with respect to the Lebesgue measure and with drift $\gamma\in\R$. Then the limit distribution of any regular estimator of the distribution function $\R^d\ni(t_1,\dots,t_d)\mapsto(\nu((-\infty,t_1]),\dots,\nu((-\infty,t_d]))$, for $d\in\N,$ is a convolution $\mathcal N(0,\Sigma)\ast M$ for some Borel probability measure $M$ and with covariance matrix $\Sigma\in\R^{d\times d}$ given by
  \begin{equation*}
    \Sigma_{i,j}=\Delta^{-2}\int\Big(\F^{-1}\Big[\frac{1}{\phi_\nu(-\cdot)}\Big]\ast\ind_{(-\infty,t_i]}\Big)\Big(\F^{-1}\Big[\frac{1}{\phi_\nu(-\cdot)}\Big]\ast\ind_{(-\infty,t_j]}\Big)\d P_\nu
  \end{equation*}
  for $i,j\in\{1,\dots,d\}$. 
\end{corollary}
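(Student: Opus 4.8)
The plan is to verify the hypotheses of Corollary~\ref{corScore} (equivalently Theorem~\ref{thmScore} with $d$ coordinate functions) for the functional $\chi(\nu)=(\nu((-\infty,t_1]),\dots,\nu((-\infty,t_d]))$ and then read off the covariance. First, by Proposition~\ref{propRegularity} and Lemma~\ref{lem:hellinger} the compound Poisson model $\mathcal P$, equipped with the tangent space $\dot\Theta_\nu=L^2(\nu)$, satisfies Assumption~\ref{assRegularity} at every $\nu\in\Theta$ with rate $n^{-1/2}$ and score operator $A_\nu$. Each coordinate of $\chi$ is of the form \eqref{eqFunctional} with $\zeta^{(i)}=\ind_{(-\infty,t_i]}$, which lies in $\dot\Theta_\nu=L^1(\nu)\cap L^2(\nu)$ because $\nu$ is a finite measure here, and the gradient computation of Section~\ref{secLevy} gives $\tilde\chi_\nu^{(i)}=\zeta^{(i)}$ (no centering, since the total mass of $\nu$ may vary along the path). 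Since $\nu$ admits a Lebesgue density, $A_\nu^\star$ is injective by the argument used for Theorem~\ref{thmDecon}, on the dense subset $\mathcal G=L^\infty(P_\nu)\cap L^2_0(P_\nu)$, using $\F[A_\nu^\star g]=\Delta\,\phi_\nu(-\cdot)\F g$ coming from \eqref{eqFubini} together with the lower bound $|\phi_\nu(u)|\ge e^{-2\Delta\lambda}>0$ read off from \eqref{eqLevyKhintchine}.

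The crux is then to check $\zeta^{(i)}\in\ran A_\nu^\star$ and to identify the preimage. Because $|\phi_\nu|\ge e^{-2\Delta\lambda}$, the convolution exponential \eqref{eqConExp} shows that $\F^{-1}[1/\phi_\nu(-\cdot)]=\delta_{-\Delta\gamma}\ast\big(e^{\Delta\lambda}\sum_{k\ge0}\tfrac{(-\Delta)^k}{k!}\nu(-\cdot)^{\ast k}\big)$ is a finite signed measure (of total variation at most $e^{2\Delta\lambda}$), so $g^{(i)}:=\Delta^{-1}\F^{-1}[1/\phi_\nu(-\cdot)]\ast\ind_{(-\infty,t_i]}$ is a bounded Borel function, an element of the set $\mathcal H$ of \eqref{eqSubsetsCP}. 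Convolving finite signed measures and comparing characteristic functions, $P_\nu(-\cdot)\ast\F^{-1}[1/\phi_\nu(-\cdot)]$ has characteristic function $\equiv1$ and hence equals $\delta_0$; this yields $A_\nu^\star g^{(i)}=\Delta\,P_\nu(-\cdot)\ast g^{(i)}=\ind_{(-\infty,t_i]}$ as an element of $L^2(\nu)$. To make the preimage lie in $L^2_0(P_\nu)$, as it must, one works with the $\nu$-a.e.\ equal representative $\ind_{(-\infty,t_i]}\ind_{\R\setminus\{0\}}$ of $\zeta^{(i)}$ (permissible since $\nu(\{0\})=0$, and harmless for $\chi$ and $\tilde\chi_\nu$): the same computation gives that $\int g^{(i)}\d P_\nu$ equals $\Delta^{-1}$ times the value of that representative at the origin, namely $0$. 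Hence $\tilde\chi_\nu^{(i)}\in\ran A_\nu^\star$ with $(A_\nu^\star)^{-1}\tilde\chi_\nu^{(i)}=g^{(i)}$.

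Corollary~\ref{corScore} then gives regularity of the parameter with efficient influence function $\tilde\psi_{P_\nu}^{(i)}=(A_\nu^\star)^{-1}\tilde\chi_\nu^{(i)}=g^{(i)}$, and Theorem~\ref{thmScore} gives that the limit law of any regular estimator is $\mathcal N(0,\Sigma)\ast M$ with $\Sigma_{i,j}=\langle\tilde\psi_{P_\nu}^{(i)},\tilde\psi_{P_\nu}^{(j)}\rangle_{L^2_0(P_\nu)}=\int g^{(i)}g^{(j)}\d P_\nu$, which is the asserted matrix after substituting $g^{(i)}=\Delta^{-1}\F^{-1}[1/\phi_\nu(-\cdot)]\ast\ind_{(-\infty,t_i]}$. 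I expect the only genuinely delicate point to be the rigorous identification $A_\nu^\star g^{(i)}=\zeta^{(i)}$: one must confirm that the abstract adjoint coincides with the explicit deconvolution $h\mapsto\Delta^{-1}\F^{-1}[1/\phi_\nu(-\cdot)]\ast h$ of Lemma~\ref{lemCPadjoint} on the non-integrable function $\ind_{(-\infty,t_i]}$ — arguing with characteristic functions of finite signed measures rather than ordinary Fourier transforms — while keeping the representative vanishing at $0$ so the centering $g^{(i)}\in L^2_0(P_\nu)$ is met. Everything else is assembly of Proposition~\ref{propRegularity}, Lemma~\ref{lemCPadjoint} and Theorem~\ref{thmScore}.
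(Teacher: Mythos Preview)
Your argument is correct and follows the paper's route exactly: assemble Proposition~\ref{propRegularity}, Lemma~\ref{lemCPadjoint}, the injectivity argument borrowed from Theorem~\ref{thmDecon}, and the explicit finite-signed-measure inverse $h\mapsto\Delta^{-1}\F^{-1}[1/\phi_\nu(-\cdot)]\ast h$ (well defined on $\mathcal H$, in particular on $\ind_{(-\infty,t]}\ind_{\R\setminus\{0\}}$), then invoke Corollary~\ref{corScore} and Theorem~\ref{thmScore}. One cosmetic slip: it is $\ind_{(-\infty,t_i]}$, not $g^{(i)}$, that belongs to the set $\mathcal H$ of \eqref{eqSubsetsCP}; the preimage $g^{(i)}$ must lie in $\mathcal G\subset L^2_0(P_\nu)$, which is exactly what your boundedness and centering checks establish.
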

  
Considering the negative half line, this lower bound coincides with the asymptotic variance of the kernel estimator by \citet{nicklReiss2012}. An interesting deviation is obtained by restricting the model on compound Poisson processes with known jump intensity $\lambda>0$ as studied in the \textit{decompounding} problem by \citet{buchmannGrubel2003}. Similarly to the deconvolution model in Section~\ref{secDecon} the tangent space is then given by $L^2_0(\nu)$ and thus the gradient of the functional $\chi(\nu)=\nu((-\infty,t])$ equals $\tilde\chi_\nu=\ind_{(-\infty,t]}-\nu((-\infty,t])$. We obtain the smaller information bound, setting $d=1$ for simplicity,
\[
  \Delta^{-2}\int\Big(\F^{-1}\Big[\frac{1}{\phi_\nu(-\cdot)}\Big]\ast\ind_{(-\infty,t]}\Big)^2\d P_\nu-\Delta^{-2}\nu((-\infty,t])^2.
\]
That means an efficient estimator which ``knows'' the jump intensity should have a smaller variance than for unknown $\lambda$ and the statistical problem is significantly simpler. Indeed, the estimator from \cite{buchmannGrubel2003} is asymptotically normal with the above variance.

\subsection{L\'evy processes with infinite jump activity}
  
If the L\'evy process has infinite jump activity, the analysis is more difficult. However, we can profit from the absolute continuity of the infinite divisible distribution $P_\nu$ with respect to the Lebesgue measure \citep[Thm. 27.4]{sato1999}. To apply Fourier methods, we will again assume that $\nu$ admits a Lebesgue density which implies in particular that the set of Lebesgue-a.e. equivalence classes embeds into the $\nu$-a.e. and into the $P_\nu$-a.e. equivalence classes. Keeping the Hilbert space structure, we can then define 
\begin{align}
  \mathcal G:= H^\infty(\R)\cap L_0^2(P_\nu) \quad\text{and}\quad
  \mathcal H:= \big\{b\in H^\infty(\R)|b(0)=0\big\}\cap \dot\Theta_\nu,\label{eqSubsetsIA}
\end{align}
where $H^\infty(\R):=\bigcap_{s\ge0}H^s(\R)$ with Sobolev spaces $H^s(\R):=\{f\in L^2(\R)|\|(1+|u|^2)^{s/2}\F f(u)\|_{L^2}<\infty \}$ of regularity $s\ge0$. For $b\in\mathcal H$ the condition $b(0)=0$ should hold for the continuous version of $b$. To allow that the generalized distribution function of $\nu$ can be estimated with parametric rate, we concentrate on mildly ill-posed problems leading to the assumption that $|\phi_\nu(u)|$ decays polynomially as $|u|\to\infty$ \citep[cf.][]{neumannReiss2009}.
\begin{lemma}\label{lemAdjoint}
  Let the finite variation L\'evy process $(L_t)$ with $\nu\in\Theta$ have infinite jump activity satisfying $|\phi_\nu(u)|\gtrsim(1+|u|)^{-\beta}$ for some $\beta>0$ and let $\nu$ be absolutely continuous with respect to the Lebesgue measure. Then
  \begin{enumerate}
    \item on $\mathcal G$ from \eqref{eqSubsetsIA} the adjoint score operator $A_\nu^\star|_{\mathcal G}$ is a bijection onto $\mathcal H$ satisfying
    \begin{align}
      A_\nu^\star\big|_{\mathcal G}:\quad \mathcal G&\to\mathcal H\subset\dot\Theta_\nu,\qquad\quad g\mapsto \Delta(P_\nu(-\cdot)\ast g),\label{eqAdjointOp}\\
      (A_\nu^\star)^{-1}\big|_{\mathcal H}:\quad\mathcal H&\to\mathcal G\subset L^2_0(P_\nu),\quad \;\,b\mapsto\Delta^{-1}\F^{-1}\big[\F b/\phi_\nu(-\cdot)\big],\label{eqAdjointOpInv}
    \end{align}
    \item $\mathcal G$ is dense in $L^2_0(P_\nu)$ and $A_\nu^\star$ is the unique continuous extension of $A_\nu^\star\big|_{\mathcal G}$.
  \end{enumerate}
\end{lemma}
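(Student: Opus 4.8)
The plan is to prove (i) by identifying the restricted adjoint $A_\nu^\star|_{\mathcal G}$ with an explicit convolution operator and then writing down a two-sided inverse built from a Fourier multiplier, and to prove (ii) by a standard density argument. For the identification, the Fubini computation carried out for compound Poisson processes in \eqref{eqFubini} goes through unchanged: for $g\in\mathcal G$ and $b\in\dot\Theta_\nu\cap L^\infty(\nu)$, Proposition~\ref{propRegularity} gives $(b\nu)\ast P_\nu\ll P_\nu$, and since $g$ is bounded, $\int g\d P_\nu=0$ and $b\in L^1(\nu)$, Fubini yields
\[
  \scapro{A_\nu b}{g}_{P_\nu}
  =\Delta\int\!\!\int g(x+y)b(x)\,\nu(\d x)\,P_\nu(\d y)
  =\Delta\,\scapro{P_\nu(-\cdot)\ast g}{b}_\nu .
\]
Both sides are continuous in $b$ for the $L^2(\nu)$-norm — the left because $A_\nu$ is bounded, the right because $P_\nu(-\cdot)\ast g\in L^2(\nu)$, shown next — and $\dot\Theta_\nu\cap L^\infty(\nu)$ is dense in $\dot\Theta_\nu$ by Proposition~\ref{propRegularity}, so $A_\nu^\star g=\Delta\,P_\nu(-\cdot)\ast g$. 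To see that this function lies in $\mathcal H$, note that $\F[P_\nu(-\cdot)\ast g]=\phi_\nu(-\cdot)\,\F g$ with $|\phi_\nu|\le1$, hence $P_\nu(-\cdot)\ast g\in H^s(\R)$ for every $s\ge0$ because $g\in H^\infty(\R)$; its continuous version satisfies $(P_\nu(-\cdot)\ast g)(0)=\int g\d P_\nu=0$; and any $H^\infty(\R)$ function vanishing at the origin automatically belongs to $\dot\Theta_\nu=L^1(\nu)\cap L^2(\nu)$, since by Sobolev embedding it is $C^1$ with bounded derivative, hence bounded near the origin by a multiple of $|x|\wedge1$ — which is in $L^1(\nu)$ and a fortiori square-$\nu$-integrable there, because $\int(|x|\wedge1)\d\nu<\infty$ — while on $\{|x|>1\}$ it is bounded and $\nu$ is finite. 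This proves \eqref{eqAdjointOp}.

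For the inverse, put $Tb:=\Delta^{-1}\F^{-1}\big[\F b/\phi_\nu(-\cdot)\big]$ for $b\in\mathcal H$; here $\phi_\nu$ is continuous and nowhere zero by \eqref{eqLevyKhintchine}. The mild ill-posedness assumption $|\phi_\nu(u)|\gtrsim(1+|u|)^{-\beta}$ together with the rapid decay of $\F b$ (from $b\in H^\infty(\R)$) gives $(1+|u|^2)^{s/2}\F b(u)/\phi_\nu(-u)\in L^2(\R)$ for all $s$, so $Tb\in H^\infty(\R)$; being bounded, $Tb\in L^2(P_\nu)$; and since $\F b\in L^1(\R)$, the Plancherel/Fourier-inversion identity $\int Tb\d P_\nu=\tfrac1{2\pi}\int\F(Tb)(u)\,\phi_\nu(-u)\d u$ collapses to $\Delta^{-1}b(0)=0$, whence $Tb\in\mathcal G$. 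Comparing Fourier transforms and using again that $\phi_\nu$ never vanishes gives $A_\nu^\star(Tb)=\Delta\,P_\nu(-\cdot)\ast Tb=b$ and $T(A_\nu^\star g)=g$ for $g\in\mathcal G$, so $A_\nu^\star|_{\mathcal G}\colon\mathcal G\to\mathcal H$ is a bijection with inverse $T$, which is \eqref{eqAdjointOpInv}. The absolute continuity of $\nu$ (and, automatically, of $P_\nu$) is what permits the identification of these $H^\infty$-representatives across the $L^2(\nu)$-, $L^2(P_\nu)$- and Lebesgue-equivalence classes.

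For (ii), $C_c^\infty(\R)$ is dense in $L^2(P_\nu)$ (continuous compactly supported functions are dense for any Radon probability measure, and mollification keeps them in $C_c^\infty(\R)\subset H^\infty(\R)$), hence so is $H^\infty(\R)$; given $g_0\in L^2_0(P_\nu)$, pick $h_n\in H^\infty(\R)$ with $h_n\to g_0$ in $L^2(P_\nu)$ and fix $\chi\in C_c^\infty(\R)$ with $c:=\int\chi\d P_\nu\neq0$, so that $g_n:=h_n-c^{-1}\big(\int h_n\d P_\nu\big)\chi$ lies in $\mathcal G$ and converges to $g_0$ because $\int h_n\d P_\nu\to\int g_0\d P_\nu=0$. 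Thus $\mathcal G$ is dense in $L^2_0(P_\nu)$, and since $A_\nu^\star$ is bounded it is the unique continuous extension of $A_\nu^\star|_{\mathcal G}$.

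I expect the only genuine obstacle to be the surjectivity onto $\mathcal H$: one must verify that dividing the rapidly decreasing $\F b$ by $\phi_\nu(-\cdot)$ again produces a rapidly decreasing function, which is precisely what the polynomial lower bound on $|\phi_\nu|$ secures and is the reason this hypothesis cannot be weakened; everything else is bookkeeping with Sobolev embeddings, the finite-variation property of $\nu$, and Fubini.
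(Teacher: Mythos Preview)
Your proof is correct and follows essentially the same route as the paper's: the Fubini identification of $A_\nu^\star g=\Delta\,P_\nu(-\cdot)\ast g$, the Sobolev-embedding argument that this lands in $\mathcal H$ via the bound $|A_\nu^\star g(x)|\lesssim|x|\wedge1$ together with finite variation, the Fourier-multiplier verification that $Tb\in H^\infty(\R)$ using the polynomial lower bound on $|\phi_\nu|$, and the computation $\int Tb\,\d P_\nu=\Delta^{-1}b(0)=0$ all match the paper's Section~\ref{secProofAdjoint}. Your density argument in (ii)---approximating in $L^2(P_\nu)$ by $C_c^\infty$ functions and then correcting the mean via a fixed bump $\chi$---is a clean variant of the paper's indicator-approximation argument and works just as well.
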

The proof of this lemma uses that by the polynomial decay $(1+|u|)^{-\beta}\lesssim|\phi_\nu(u)|\lesssim1$ both operators, $A_\nu^\star$ and $(A_\nu^\star)^{-1}$, are Fourier multipliers on Sobolev spaces and thus $H^\infty(\R)$-functions are mapped into $H^\infty(\R)$ again. The smoothness will be used to show $|(A_\nu^\star g)(x)|\lesssim1\wedge|x|$ for $x\in\R$ in order to verify that $A_\nu^\star$ is well defined. For details we refer to Section~\ref{secProofAdjoint}.

Comparing the adjoint score operator \eqref{eqAdjointOpInv} in the L\'evy model to $A_\nu^\star$ in the deconvolution model \eqref{eqADecon}, we see that both operators have exactly the same structure. To invert $A_\nu^\star$, we have to deconvolve with the observation measure itself in the L\'evy case. From our lower bounds perspective we clearly recover this \textit{auto-deconvolution} phenomenon, which was already described by \citet{belomestnyReiss2006} as well as \citet{nicklReiss2012}. For convenience we will write throughout $\F^{-1}[1/\phi_\nu(-\cdot)]\ast b=\F^{-1}[\F b/\phi_\nu(-\cdot)]$ which is justified in distributional sense. In combination with the results for the compound Poisson case Lemma~\ref{lemAdjoint} has two immediate consequences. 
\begin{remark}
  If the L\'evy process is of finite variation, has an absolutely continuous jump measure and has either finite jump activity or has a polynomial decreasing characteristic function, then
  \begin{enumerate}
    \item $A_\nu^\star$ is injective and therefore $\overline\ran A_\nu=L_0^2(P_\nu)$. This means that the tangent set $\dot{\mathcal P}_{P_\nu}=A_\nu\dot\Theta_\nu$ is dense in $L^2(P_\nu)$.
  % \item Locally at $\nu$ the limit experiment of the L\'evy model is an inverse problem with operator / the WALL is given by
  % \begin{equation}\label{eqLevyInv}
  %   L^1(\nu)\cap L^\infty(\nu)\ni b\mapsto\Delta\frac{\d P_\nu\ast(b\d\nu)-(\int b\d\nu)\d P_\nu}{\d P_{\nu}}\in L^2(P_\nu).
  % \end{equation}
  % In other words at $\nu$ the L\'evy process experiment converges weakly to the inverse problem corresponding to \eqref{eqLevyInv} in the sense of \cite{leCam1972}.
    \item For any linear functional $\chi(\nu)=\int\zeta\d\nu$ satisfying $\zeta\in\mathcal H$, where $\mathcal H\subset\dot\Theta_\nu$ is defined in \eqref{eqSubsetsCP} and \eqref{eqSubsetsIA}, respectively, the information bound is given by
    \[
      \Delta^{-2}\int\Big(\F^{-1}\Big[\frac{1}{\phi_\nu(-\cdot)}\Big]\ast\zeta\Big)^2\d P_\nu.
    \]
  \end{enumerate} 
\end{remark}
The subset $\mathcal H$ of arbitrary large Sobolev smoothness is obviously very restrictive. Let us extend the information bound to a larger class of functionals by using Proposition~\ref{propApprox}. This is illustrated in the following example.

\begin{example}[Gamma process]\label{exGamma}
  Let $(L_t)$ be a gamma process with $Y_k\sim\Gamma(\alpha\Delta,\lambda)$ for all $k=1,\dots,n$. For simplicity set $\lambda=1$. The probability density, the characteristic function and the L\'evy measure are given by 
  \begin{gather*}
    \gamma_{\alpha\Delta}(x)
    :=\frac{1}{\Gamma(\alpha\Delta)}x^{\alpha\Delta-1}e^{-x}\ind_{[0,\infty)}(x),\qquad
    \phi_\nu(u)=(1-iu)^{-\alpha\Delta}\quad\text{and}\\
    \nu(\d x)=\alpha x^{-1}e^{-x}\ind_{[0,\infty)}(x)\d x,\quad\text{for } x,u\in\R,
  \end{gather*}
  respectively. Therefore, $|\phi_\nu|$ decays with polynomial rate $\beta=\Delta\alpha$ and we can apply Lemma~\ref{lemAdjoint}. The estimation of the generalized distribution function $\chi(\nu)=\int_t^{\infty}\d\nu$ for some fixed $t>0,$ induces the gradient $\tilde\chi_\nu=\ind_{[t,\infty)}$. To approximate $\tilde\chi_\nu$ with a sequence in $\mathcal H$, we construct $\chi_n(x)=\int_{-\infty}^x(\delta_n(y-t)-\delta_n(y-n))\d y$ for a Dirac sequence $(\delta _n)$. More precisely, let $(\delta_n)\subset C^\infty(\R)$ be a family of smooth nonnegative functions satisfying $\int_{\R}\delta_n=1$ and $\supp \delta_n\subset[-1/n,1/n]$. Obviously, $(\chi_n)\subset\mathcal H$. Since $\nu$ is a finite measure on $\R\setminus(-\eps,\eps)$ for any $\eps>0$, dominated convergence shows $\|\tilde\chi_\nu-\chi_n\|_{L^2(\nu)}\to0$. Denoting the distribution function of $\Gamma(\beta,1)$ by $\Gamma_\beta$, we obtain for $\alpha\Delta<1/2$
  \begin{align}
    (A_\nu^\star)^{-1}\chi_n
    &=\Delta^{-1}\F^{-1}[(1+iu)(1+iu)^{\alpha\Delta-1}]\ast\chi_n\notag\\
    &=\Delta^{-1}\gamma_{1-\alpha\Delta}(-\cdot)\ast(\chi_n-\chi_n')\notag\\
    &=\Delta^{-1}\big(\gamma_{1-\alpha\Delta}(-\cdot)\ast\chi_n-\gamma_{1-\alpha\Delta}(-\cdot)\ast\delta_n(\cdot-t)\notag\\
    &\qquad\qquad+\gamma_{1-\alpha\Delta}(-\cdot)\ast\delta_n(\cdot-n)\big)\notag\\
    &\to\Delta^{-1}\big((1-\Gamma_{1-\alpha\Delta}(t-\cdot))-\gamma_{1-\alpha\Delta}(t-\cdot)\big)\notag\\
    &=\Delta^{-1}\gamma_{1-\alpha\Delta}(-\cdot)\ast(\ind_{(-\infty,t]}-\delta_t)=:\psi\label{eqGammaPsi}
  \end{align}
  where the convergence holds in $L^2(\R)$ owing to $\gamma_{1-\alpha\Delta}\in L^2(\R)$ for $\alpha\Delta<1/2$. Therefore, in a natural way the limiting object is $\psi=\F^{-1}[1/\phi_\nu(-\cdot)]\ast\ind_{(-\infty,t]}$. When does this limit hold in $L^2(P_\nu)$, too? As we saw above, the probability density of $P_\nu$ is bounded everywhere except for the singularity at zero which is of order $1-\alpha\Delta$. For any $t>0$ and $n$ large enough $\gamma_{1-\alpha\Delta}\ast\delta_n(\cdot-t)$ is uniformly bounded in a small neighborhood of zero such that dominated convergence around zero together with the $L^2(\R)$-convergence on the real line yields $\|(A_\nu^\star)^{-1}\chi_n-\psi\|_{L^2(P_\nu)}\to0$. Hence, Proposition~\ref{propApprox} shows $\ind_{[t,\infty)}\in\ran A_\nu^\star$. Therefore, the information bound is given by
  \[
    \int(\F[1/\phi_\nu(-\cdot)]\ast\ind_{[t,\infty)})^2\d P_\nu,
  \]
  which can be understood via definition \eqref{eqGammaPsi} or equivalently as the limit $\lim_{n\to\infty}\|(A_\nu^\star)^{-1}\chi_n\|_{L^2(P_\nu)}^2$. For $\alpha\Delta>1/2$ \citet{neumannReiss2009} show that $\nu([t,\infty))$ cannot be estimated with $\sqrt n$-rate.

  \iffalse
  For the case $\alpha\Delta\ge1/2$, note that any sequence $(\chi_n)_n$ satisfying (i) of Proposition~\ref{propApprox} has to converge pointwise on $\R\setminus\{t\}$ to $\ind_{[t,\infty)}$ since $\nu$ has strictly positive mass on the positive real line. If (ii) of Proposition~\ref{propApprox} would hold as well $\gamma_{1-\alpha\Delta}\ast\chi_n'$ has to converge in $L^2(P_\nu)$. This implies that $\|\gamma_{1-\alpha\Delta}\ast\chi_n'\|_{L^2([t-\eps,t+\eps])}$ converges for some $\eps>0$ because $P_\nu$ has positive mass around $t>0$...?\gap
  \fi
\end{example}

This example shows the importance of the pseudo-locality for the devolution operator which was discussed by \citet{nicklReiss2012} in detail: If the singularity of the pointwise limit $\psi$ as in \eqref{eqGammaPsi} and the singularity of the distribution $P_\nu$ would coincide, the $L^2(P_\nu)$-norm of any approximating sequence $(A_\nu^\star)^{-1}\chi_n$ would diverge such that $\tilde\chi_\nu$ cannot be an element of $\ran A_\nu^\star$ by Proposition~\ref{propApprox}. A simple example is given by the convolution of a Gamma process and a Poisson process \citep[cf.][Sect. 3.2]{nicklReiss2012}.  

For $\delta>0$ and $l_\delta$ the largest integer which is strictly smaller than $\delta$, let $C^\delta(\R)$ denote the set of functions $f$ that possess $l_\delta$ continuous derivatives with $f^{(l_\delta)}$ being $(\delta-l_\delta)$-H\"older continuous. We will show that for suitable regularity $\delta>0$ the class
\begin{align*}
  Z^{\delta}(\R):=\Big\{\zeta=\zeta^s+\zeta^c\Big|&(1+|x|^{-1})\zeta^s(x)\in H^\delta(\R),\\ &\zeta^c\in C^{\delta+\eps}(\R)\text{ for some }\eps>0, \zeta^c(0)=0\Big\}
\end{align*}
intersected with $L^1(\nu)\cap L^2(\nu)$ is a subset of $\ran A_\nu^\star$. 
\begin{example}[Generalized distribution function]\label{exGenDist}
  Recall that the generalized distribution function of $\nu$ corresponds to the functionals $\zeta_t:=\ind_{(-\infty,t]}$ for $t<0$ and $\zeta_t:=\ind_{[t,\infty)}$ for $t>0$. It is easy to check that $\zeta_t$ can be decomposed for all $t\neq0$ in a way such that it is contained in $Z^\delta(\R)$ for any $\delta<1/2$. For instance, write $\ind_{[t,\infty)}=\zeta_t^s+\zeta_t^c$ with $\zeta_t^s(x):=e^{t-x}\ind_{[t,\infty)}(x)$ and $\zeta_t^c(x):=(1-e^{t-x})\ind_{[t,\infty)}(x)$ for $t>0$. Then $\zeta_t^s$ is a translation of the gamma density $\gamma_1$ such that its Fourier transform decays with polynomial rate one. The factor $(1+|x|^{-1})$ is harmless since $\zeta_t^s$ equals zero around the origin. Moreover, $\zeta_t^c$ is Lipschitz continuous. On the negative half line an analogous decomposition applies. 
\end{example}
For this analytic description of the range of the adjoint score operator, we apply the approach by \cite{nicklReiss2012} as well as \cite{soehlTrabs2012}. They study the deconvolution operator $\F^{-1}[1/\phi_\nu(-\cdot)]$ as Fourier multiplier on Besov spaces. We suppose that the L\'evy process $L$ with jump density $\nu$ satisfies the following.

\begin{assumption}\label{assFourierMult}\needspace{3\baselineskip}
  For some $\beta>0$ assume for all $u\in\R$
  \begin{enumerate}
    \item $|\phi_\nu(u)|\gtrsim(1+|u|)^{-\beta}$ and
    \item $x\nu$ has a bounded Lebesgue density with $|\F[x\nu](u)|\lesssim(1+|u|)^{-1}$.
  \end{enumerate}
\end{assumption}
This assumption is satisfied by the gamma process discussed in Example~\ref{exGamma} and in view of Lemma 2.1 in \cite{trabs:2011} for much larger class of L\'evy processes as well, including self-decomposable processes.
\begin{proposition}\label{propBound}
  Let the finite variation L\'evy process $(L_t)$ with $\nu\in\Theta$ satisfy Assumption~\ref{assFourierMult} for some $\beta>0$. Then for any $\zeta\in Z^\beta(\R)\cap L^1(\nu)\cap L^2(\nu)$ it holds $\zeta\in\ran A_\nu^\star$ with $(A_\nu^\star)^{-1}\zeta=\F^{-1}[1/\phi_\nu(-\cdot)]\ast\zeta$.
\end{proposition}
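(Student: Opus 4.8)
The strategy is to realise $\zeta$ as an element of $\ran A_\nu^\star$ through Proposition~\ref{propApprox}, applied in $H=L^2_0(P_\nu)$ with the dense subspace $\mathcal G=H^\infty(\R)\cap L^2_0(P_\nu)$ from \eqref{eqSubsetsIA}. Assumption~\ref{assFourierMult}(i) gives $|\phi_\nu|>0$ on all of $\R$ and (ii) forces $\nu$ to be absolutely continuous, so Lemma~\ref{lemAdjoint} is available: $A_\nu^\star$ is injective, $A_\nu^\star|_{\mathcal G}$ is a bijection onto $\mathcal H$, and its inverse is the Fourier multiplier $b\mapsto\Delta^{-1}\F^{-1}[\F b/\phi_\nu(-\cdot)]$. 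Hence it suffices to exhibit a sequence $\chi_n\in\mathcal H$ with $\chi_n\to\zeta$ in $\overline{\lin}\dot\Theta_\nu=L^2(\nu)$ for which $\Delta^{-1}\F^{-1}[\F\chi_n/\phi_\nu(-\cdot)]$ converges in $L^2(P_\nu)$. Proposition~\ref{propApprox} then yields $\zeta\in\ran A_\nu^\star$, and by injectivity and continuity of $A_\nu^\star$ the limit must be $(A_\nu^\star)^{-1}\zeta$, identified with the distributional convolution $\F^{-1}[1/\phi_\nu(-\cdot)]\ast\zeta$ (up to the normalising constant $\Delta$ of Lemma~\ref{lemAdjoint}), since $1/\phi_\nu(-\cdot)$ is a smooth symbol of polynomial growth and thus a multiplier on tempered distributions.

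For the approximation I would fix a Dirac sequence $(\delta_n)$ of compactly supported smooth probability densities with $\supp\delta_n\subset[-1/n,1/n]$ and smooth cutoffs $\rho_n$ equal to $1$ on $[-n,n]$ and supported in $[-2n,2n]$, and set $\chi_n:=\rho_n\,(\zeta\ast\delta_n)$, corrected by a vanishing smooth bump so that $\chi_n(0)=0$ exactly; this adjustment is harmless because $\zeta$ vanishes at the origin in the relevant sense, $(1+|x|^{-1})\zeta^s\in H^\beta(\R)$ forcing $\zeta^s$ to vanish there and $\zeta^c(0)=0$ being assumed. Then $\chi_n$ is smooth and compactly supported, hence in $H^\infty(\R)$, it vanishes at $0$, and it lies in $L^1(\nu)\cap L^2(\nu)$ because $\chi_n(0)=0$ makes $|\chi_n(x)|\lesssim|x|$ near the origin, where $\nu$ may have infinite mass, while $\int(|x|\wedge1)\,\d\nu<\infty$; so $\chi_n\in\mathcal H$. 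Since $\chi_n\to\zeta$ pointwise with a $\nu$-integrable and $\nu$-square-integrable dominating function — $\nu$ is finite on $\{|x|\ge\eta\}$ for every $\eta>0$, and near the origin $\zeta(0)=0$ together with $\int(|x|\wedge1)\,\d\nu<\infty$ supplies the bound — dominated convergence gives $\chi_n\to\zeta$ in $L^2(\nu)$ and in $L^1(\nu)$.

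The core of the proof, and its main obstacle, is the convergence of $(A_\nu^\star)^{-1}\chi_n=\Delta^{-1}\F^{-1}[\F\chi_n/\phi_\nu(-\cdot)]$ in $L^2(P_\nu)$ — not merely in $L^2(\R)$, which would be a routine Plancherel estimate. Here the two-part structure of $Z^\beta(\R)$ and Assumption~\ref{assFourierMult} enter, following the Besov-space Fourier-multiplier analysis of \citet{nicklReiss2012} (see also \citet{trabs:2011}). For the smooth part, Assumption~\ref{assFourierMult}(i) gives $|\phi_\nu(u)|^{-1}\lesssim(1+|u|)^\beta$, so already $\zeta^s\in H^\beta(\R)$ puts $\F\zeta^s/\phi_\nu(-\cdot)$ in $L^2(\R)$; the stronger hypothesis $(1+|x|^{-1})\zeta^s\in H^\beta(\R)$ is what controls the behaviour of $\F^{-1}[1/\phi_\nu(-\cdot)]\ast\zeta^s$ near the origin, which is needed because in the infinite-activity regime the Lebesgue density of $P_\nu$ may have an integrable singularity at $0$. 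For the continuous part, differentiating \eqref{eqLevyKhintchine} gives $\phi_\nu'/\phi_\nu=\Delta i(\gamma+\F[x\nu])$, so Assumption~\ref{assFourierMult}(ii) makes the logarithmic derivative of $\phi_\nu$ bounded and the symbol $1/\phi_\nu(-\cdot)$ of controlled variation; combined with $\zeta^c\in C^{\beta+\eps}(\R)$ and $\zeta^c(0)=0$, the pseudo-locality estimates of \citet{nicklReiss2012} show that $\F^{-1}[1/\phi_\nu(-\cdot)]\ast\zeta^c$ is a bounded function vanishing at the origin at a rate compatible with that density, hence in $L^2(P_\nu)$. Propagating these weighted bounds along the sequence shows that $\Delta^{-1}\F^{-1}[\F\chi_n/\phi_\nu(-\cdot)]$ is Cauchy in $L^2(P_\nu)$, so Proposition~\ref{propApprox} applies. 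The delicate point throughout is that plain Plancherel estimates control only the Lebesgue $L^2$-norm, whereas $L^2(P_\nu)$ weights mass near the origin more heavily: one must exploit the pseudo-locality of $\F^{-1}[1/\phi_\nu(-\cdot)]$ — that deconvolution with $1/\phi_\nu$ does not transport mass of $\zeta$ towards the origin — against the density singularity of $P_\nu$, which is exactly the phenomenon illustrated after Example~\ref{exGamma} and the reason for both the splitting $\zeta=\zeta^s+\zeta^c$ and for Assumption~\ref{assFourierMult}(ii).
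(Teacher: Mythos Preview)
Your high-level strategy is exactly the paper's: invoke Proposition~\ref{propApprox} on a dense smooth subspace, approximate $\zeta$ there, and show that the deconvolved approximants converge in $L^2(P_\nu)$. But the proposal leaves the decisive analytic step unproved, and at two points the heuristics are off.

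First, the paper does not control $\F^{-1}[\F\zeta/\phi_\nu(-\cdot)]$ in $L^2(P_\nu)$ through ``pseudo-locality'' alone. For $\zeta^c\in C^{\beta+\eps}$ the argument is trivial: the Fourier-multiplier bound $B^{s+\beta}_{\infty,\infty}\to B^{s}_{\infty,\infty}$ already gives $\|\F^{-1}[\F\zeta^c/\phi_\nu(-\cdot)]\|_\infty\lesssim\|\zeta^c\|_{C^{\beta+\eps}}$, and boundedness suffices because $P_\nu$ is a probability measure---no vanishing at the origin is needed or claimed. The genuine difficulty sits with $\zeta^s$, and here the paper uses a concrete device you omit: writing $\F\zeta^s=(\F[\tfrac{1}{ix}\zeta^s])'$ and integrating by parts produces a factor $ix$ in physical space, which is then paired with the fact (from Assumption~\ref{assFourierMult} via \cite{nicklReiss2012}) that $xP_\nu(\d x)$ has a \emph{bounded} Lebesgue density. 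This converts the $L^2(P_\nu)$-norm into an ordinary $L^2(\R)$-norm and yields the a~priori bound
\[
  \big\|\F^{-1}[\F\zeta/\phi_\nu(-\cdot)]\big\|_{L^2(P_\nu)}
  \lesssim\|\zeta^s\|_{H^\beta}+\big\|\tfrac{1}{x}\zeta^s\big\|_{H^\beta}+\|\zeta^c\|_{C^{\beta+\eps}}.
\]
This bound, applied to differences $\zeta-\chi_n$, is what makes the sequence Cauchy; without it your argument has no quantitative handle on the $L^2(P_\nu)$-norm. For the derivative estimate $|(\phi_\nu^{-1})'(u)|\lesssim(1+|u|)^{\beta-1}$ underlying the multiplier theorem one needs the logarithmic derivative to \emph{decay}, not merely be bounded, which is why the paper first reduces to $\gamma=0$ by the translation isomorphism $g\mapsto g(\cdot+\gamma)$; you note the drift term but do not remove it.

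Second, your approximation $\chi_n=\rho_n\,(\zeta\ast\delta_n)$ mollifies $\zeta$ as a whole, but the a~priori bound above measures the two pieces in different norms. The paper therefore approximates $\zeta^s$ and $\zeta^c$ \emph{separately} by $\chi_n^s,\chi_n^c\in C^\infty$ with $\chi_n^s(0)=\chi_n^c(0)=0$, so that $\|\zeta^s-\chi_n^s\|_{H^\beta}+\|\tfrac{1}{x}(\zeta^s-\chi_n^s)\|_{H^\beta}\to0$ and $\|\zeta^c-\chi_n^c\|_{C^{\beta+\eps}}\to0$; this is what the definition of $Z^\beta(\R)$ is tailored to. A joint mollification does not obviously control $\|\tfrac{1}{x}(\zeta^s-\chi_n^s)\|_{H^\beta}$. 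Finally, to accommodate $\zeta^c$, which lies only in $C^{\beta+\eps}$ and not in $H^\infty$, the paper enlarges the dense subspace to $\mathcal G'=C^\infty(\R)\cap L^2_0(P_\nu)$ and checks that $A_\nu^\star$ remains a bijection $\mathcal G'\to\mathcal H'$ there; you should do the same rather than work in $\mathcal G=H^\infty(\R)\cap L^2_0(P_\nu)$.
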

To prove this proposition, we combine the analysis of L\'evy processes in \cite{nicklReiss2012} with the insights on the interplay of the smoothness of $\zeta$ and the decay of the deconvolution operator in \cite{soehlTrabs2012} and with the characterization in Proposition~\ref{propApprox}. The proof is postponed to Section~\ref{secProofBound}. 

The formula $(A_\nu^\star)^{-1}\zeta=\F^{-1}[1/\phi_\nu(-\cdot)]\ast\zeta$ can be either understood in distributional sense or as the limit of an approximating sequence as illustrated in Example~\ref{exGamma}. Applying Theorem~\ref{thmScore} and Proposition~\ref{propBound} on Example~\ref{exGenDist}, we get the following convolution theorem.
\begin{theorem}\label{corInfoBoundIA}
  Let the finite variation L\'evy process $(L_t)$ with $\nu\in\Theta$ satisfy Assumption~\ref{assFourierMult} for some $\beta<1/2$. Then the limit distribution of any regular estimator of the generalized distribution function $(\R\setminus\{0\})^d\ni(t_1,\dots,t_d)\mapsto(\scapro{\zeta_{t_i}}\nu,\dots,\scapro{\zeta_{t_d}}\nu)$ is a convolution $\mathcal N(0,\Sigma)\ast M$ for some Borel probability measure $M$ and with the covariance matrix $\Sigma\in\R^{d\times d}$ given by 
  \begin{align*}
    \Sigma_{i,j}=\frac{1}{\Delta^2}\int_{\R}\Big(\mathcal F^{-1}\Big[\frac{1}{\phi_\nu(-\cdot)}\Big]\ast\zeta_{t_i}\Big)\Big(\mathcal F^{-1}\Big[\frac{1}{\phi_\nu(-\cdot)}\Big]\ast\zeta_{t_j}\Big)\d P_\nu
  \end{align*}
  for $i,j\in\{1,\dots,d\}$.
\end{theorem}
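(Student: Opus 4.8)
The plan is to obtain Theorem~\ref{corInfoBoundIA} as a specialisation of the abstract convolution theorem, Theorem~\ref{thmScore}, to the L\'evy model, feeding in the regularity from Proposition~\ref{propRegularity} and the range characterisation from Proposition~\ref{propBound}. I would organise the argument into four steps: (a) confirm that the model~\eqref{eqModel} with tangent space~\eqref{eqTangentSpace} is a locally regular indirect model at $\nu$; (b) compute the pathwise derivative of the vector-valued parameter $\chi(\nu)=(\scapro{\zeta_{t_1}}{\nu},\dots,\scapro{\zeta_{t_d}}{\nu})$ together with its gradient; (c) verify $\zeta_{t_i}\in Z^\beta(\R)\cap L^1(\nu)\cap L^2(\nu)$ for every $i$, so that Proposition~\ref{propBound} identifies the pre-images under $A_\nu^\star$; and (d) read off the limiting distribution and its covariance from Theorem~\ref{thmScore}.

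For step~(a): Assumption~\ref{assFourierMult}(ii) forces $x\nu$ to have a bounded Lebesgue density, and since $\nu(\{0\})=0$ this already implies that $\nu$ itself is absolutely continuous with respect to Lebesgue measure. Hence Proposition~\ref{propRegularity} applies: $\mathcal P$ satisfies Assumption~\ref{assRegularity} at $\nu$ with the continuous score operator $A_\nu:\dot\Theta_\nu\to L^2_0(P_\nu)$ of~\eqref{eqScoreOp}, and by Lemma~\ref{lem:hellinger} the experiment is a locally regular indirect model at $\nu$ with rate $r_n=n^{-1/2}$ and Hilbert space $H=L^2_0(P_\nu)$. For step~(b) I would reuse the computation carried out just before Proposition~\ref{propRegularity}, applied coordinatewise along the submodel paths $\d\nu_t/\d\nu=k(tb)$: dominated convergence (using $|k(tb)-1|\le t|b|\in L^2(\nu)$) gives the pathwise derivative $\dot\chi_\nu b=(\scapro{\zeta_{t_1}}{b}_\nu,\dots,\scapro{\zeta_{t_d}}{b}_\nu)$, which is linear and continuous, so the Riesz representer in $L^2(\nu)$ is $\tilde\chi_\nu=(\zeta_{t_1},\dots,\zeta_{t_d})$; no centring is needed, because the total mass of $\nu$ may vary along the path.

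For step~(c): each $\zeta_{t_i}$ is bounded and has support bounded away from the origin, so $\int(|x|\wedge1)\,\nu(\d x)<\infty$ forces $\nu(\supp\zeta_{t_i})<\infty$, whence $\zeta_{t_i}\in L^1(\nu)\cap L^2(\nu)$. By Example~\ref{exGenDist} the indicator $\zeta_{t_i}$ decomposes into a translated gamma density plus a Lipschitz remainder vanishing at zero, placing it in $Z^\delta(\R)$ for every $\delta<1/2$, in particular in $Z^\beta(\R)$ since $\beta<1/2$. Proposition~\ref{propBound} then yields $\zeta_{t_i}\in\ran A_\nu^\star$ with pre-image $\F^{-1}[1/\phi_\nu(-\cdot)]\ast\zeta_{t_i}$; moreover, under Assumption~\ref{assFourierMult} the operator $A_\nu^\star$ is injective by Lemma~\ref{lemAdjoint}, so its Moore--Penrose pseudoinverse coincides with the genuine inverse, and tracking the normalisation of the score operator through~\eqref{eqScoreOp} and~\eqref{eqAdjointOpInv} gives $(A_\nu^\star)^{-1}\zeta_{t_i}=\Delta^{-1}\F^{-1}[1/\phi_\nu(-\cdot)]\ast\zeta_{t_i}$.

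For step~(d): with $\tilde\chi_\nu^{(i)}=\zeta_{t_i}\in\ran A_\nu^\star$ for all $i$, Theorem~\ref{thmScore} (equivalently Corollary~\ref{corScore}) gives that $\psi_n(P_\nu)=\chi(\nu)$ is regular relative to $\dot\Theta_\nu$, that the efficient influence function is $\tilde\psi_{P_\nu}=\big((A_\nu^\star)^{-1}\zeta_{t_1},\dots,(A_\nu^\star)^{-1}\zeta_{t_d}\big)$, and that every regular estimator sequence with respect to the rate $n^{-1/2}$ has limit law $\mathcal N(0,\Sigma)\ast M$ with $\Sigma_{i,j}=\scapro{\tilde\psi_{P_\nu}^{(i)}}{\tilde\psi_{P_\nu}^{(j)}}_{L^2_0(P_\nu)}$; substituting the explicit pre-images and collecting the resulting $\Delta^{-2}$ reproduces the stated formula for $\Sigma_{i,j}$. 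The genuinely hard part is none of this bookkeeping but Proposition~\ref{propBound} itself, together with the underlying Lemma~\ref{lemAdjoint}: controlling the $L^2(P_\nu)$-norm of the deconvolution of the discontinuous indicators $\zeta_{t_i}$ by the non-smooth, merely polynomially decaying symbol $1/\phi_\nu(-\cdot)$. This is where the Fourier-multiplier estimates on Besov scales and the pseudo-locality of the deconvolution operator of \citet{nicklReiss2012} enter, combined with the smoothness--decay interplay of \citet{soehlTrabs2012}; that analysis is deferred to Section~\ref{secProofBound}. Finally, the restriction $\beta<1/2$ is used only to guarantee $\zeta_{t}\in Z^\beta(\R)$, which is precisely the threshold beyond which \citet{neumannReiss2009} show that the generalised distribution function cannot be estimated at rate $\sqrt n$.
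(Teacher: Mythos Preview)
Your proposal is correct and follows exactly the route the paper takes: the paper's entire proof is the single sentence ``Applying Theorem~\ref{thmScore} and Proposition~\ref{propBound} on Example~\ref{exGenDist}, we get the following convolution theorem,'' and you have simply unpacked this into its constituent verifications (regularity of the model via Proposition~\ref{propRegularity} and Lemma~\ref{lem:hellinger}, gradient $\tilde\chi_\nu^{(i)}=\zeta_{t_i}$, membership $\zeta_{t_i}\in Z^\beta(\R)$ via Example~\ref{exGenDist} for $\beta<1/2$, and the range condition plus explicit pre-image via Proposition~\ref{propBound}). Your tracking of the $\Delta^{-1}$ normalisation through \eqref{eqAdjointOpInv} and your observation that Assumption~\ref{assFourierMult}(ii) already forces absolute continuity of $\nu$ are both correct and more explicit than the paper.
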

In the situations of Corollary~\ref{corInfoBoundCP} and Theorem~\ref{corInfoBoundIA} the estimator constructed by \citet{nicklReiss2012} is therefore efficient. 

\begin{remark}\label{exDecon3}
  Let us finish the considerations of Section~\ref{secDecon} for the deconvolution model $Y_j=X_j+\eps_j\sim P_\nu=\nu\ast\mu$. If the characteristic function of the error distribution $\mu$ satisfies for some $\beta>0$
  \begin{equation}\label{eqFourMultDecon}
    |\phi_\eps(u)|>0\quad\text{and}\quad|(1/\phi_\eps)'(u)|\lesssim (1+|u|)^{\beta-1},\quad u\in\R,
  \end{equation}
  Lemma 5(i) in \cite{soehlTrabs2012} shows that $\phi_\eps^{-1}$ is a Fourier multiplier on Besov spaces. Therefore, an analogous result as Proposition~\ref{propBound} applies in the deconvolution setup which can be combined with Theorem~\ref{thmDecon}. We can recover Theorem 4 in \cite{soehlTrabs2012} under weaker assumptions on the distributions of $\nu$ and $\mu$: If the distribution $\nu$ of $X_j$ possesses a Lebesgue density and if \eqref{eqFourMultDecon} is satisfied for some $\beta>0$, then the asymptotic variance of every regular estimator of the linear function $\chi(\nu)=\int\zeta\d\nu$ for some $\zeta\in Z^\delta(\R), \delta>\beta,$ is bounded from below by
  \begin{equation*}
    \int\Big(\F^{-1}\Big[\frac{1}{\phi_\eps(-\cdot)}\Big]\ast\zeta\Big)^2\d P_\nu-\Big(\int\zeta\d\nu\Big)^2.
  \end{equation*}
  As we saw in Example~\ref{exGenDist}, we need $\delta<1/2$ to apply this information bound to distribution function estimation. Therefore, we need $|\phi_\eps(u)|\gtrsim (1+|u|)^{-\beta}$ for $\beta<1/2$ which coincides with the classical condition under which the distribution function can be estimated with the parametric rate, cf. \cite{fan1991}. 
\end{remark}

\section{Extension to Banach space valued functions}\label{secFunctionals}

So far we considered $\R^d$-valued derived parameters $\chi$. The aim of the section is to generalize Theorem~\ref{thmScore} to functions $\chi:\Theta\to\mathbb B$ for a Banach space $(\mathbb B,\|\cdot\|)$. As pointed out by \citet{vanderVaart1988} for the estimation of parameters in infinite dimensional spaces efficiency means essentially efficiency for the marginals plus tightness of the limit law of the sequence of estimator.

Let $(\mathcal X_n,\mathcal A_n,P_{n,\theta}:\theta\in\Theta)$ be a locally regular indirect model at $\theta\in\Theta$ with respect to the tangent set $\dot\Theta_\theta$ and with generalized score operator $A_\theta:\lin\dot\Theta_\theta\to H_\theta$ for some Hilbert space $(H_\theta,\scapro\cdot\cdot_H)$. First, we have to generalize the notion of regularity to Banach space valued functions. The derived parameter $\chi:\Theta\to\mathbb B$ is \textit{pathwise differentiable} at $\theta\in\Theta$ with respect to the tangent set $\dot\Theta_\theta$ if for all $b\in\dot\Theta_\theta$ with associated path $[0,\tau)\ni t\mapsto \theta_t$
\[
  t^{-1}\big(\chi(\theta_t)-\chi(\theta)\big)\to\dot\chi_\theta b
\]
holds true for some continuous linear map $\dot\chi_\theta:\lin\dot\Theta_\theta\to\mathbb B$. The gradient of $\dot\chi_\theta$ is then defined as in \cite{vanderVaart1991} using the dual space $\mathbb B^\star$, which is the space of all continuous linear functions $b^\star:\mathbb B\to\R$. The composition $b^\star\circ\dot\chi_\theta:\lin\dot\Theta_\theta\to\R$ is linear and continuous and thus it can be represented by some $\tilde\chi_{\theta,b^\star}\in\overline\lin\dot\Theta_\theta$:
\[
  (b^\star\circ\dot\chi_\theta)b=\scapro{\tilde\chi_{\theta,b^\star}} b_\theta\quad\text{for all }b\in\lin\dot\Theta_\theta.
\]
Similarly, the parameter $\psi_n(P_{n,\theta})=\chi(\theta)$ is \textit{regular} if \eqref{eqDiffPsi} holds for some continuous linear map $\dot\psi_\theta:H\to\mathbb B$. The efficient influence functions $\tilde\psi_{\theta,b^\star}\in\overline\ran A_\theta$ are defined by $(b^\star\circ\dot\psi_\theta)h=\scapro{\tilde\psi_{\theta,b^\star}} h_H$ for all $h\in\ran A_\theta$.
The sequence of estimators $T_n:\mathcal X_n\to\mathbb B$ is called \textit{regular} at $\theta\in\Theta$ with respect to the rate $r_n$ if there is a fixed tight Borel probability measure $L$ on $\mathbb B$ such that for all $b\in\dot\Theta_\theta$ with corresponding submodel $t\mapsto P_{n,\theta_t}$
\[
  \frac{1}{r_n}\big(T_n-\chi(\theta_{r_n})\big)\overset{P_{n,\theta_{r_n}}}{\Longrightarrow} L,
\]
where weak convergence is defined in terms of outer probability to avoid measurability problems, that is,
\[
  \E^*_{P_{n,\theta_{r_n}}}\big[f\big(r_n^{-1}(T_n-\chi(\theta_{r_n})\big)\big]\to\int f\d L
\]
for all bounded, continuous function $f:\mathbb B\to \R$ \citep[cf. ][Def. 1.3.3]{vanderVaartWellner1996}. Now we can state the following generalization of Theorem~\ref{thmScore}.
\begin{theorem}
  Let $(\mathcal X_n,\mathcal A_n,P_{n,\theta}:\theta\in\Theta)$ be a locally regular indirect model at $\theta\in\Theta$ with respect to $\dot\Theta_\theta$ and let $\chi:\Theta\to\mathbb B$ be pathwise differentiable at $\theta$ with respect to $\dot\Theta_\theta$. Then the sequence $\psi_n:\mathcal P_n\to\mathbb B$ is regular at $\theta$ relative to $\dot\Theta_\theta$ if and only if 
  \begin{equation}\label{eqRanBanach}
    \tilde\chi_{\theta,b^\star}\in\ran A_\theta^\star\quad\text{for all}\quad b^\star\in\mathbb B^\star.
  \end{equation}
  In this case the efficient influence functions is given by the Moore--Penrose pseudoinverse $\tilde\psi_{\theta,b^\star}=(A_\theta^\star)^\dagger\tilde\chi_{\theta,b^\star}$. For any regular sequence of estimators $T_n$ the limit distribution $L$ of $r_n^{-1}(T_n-\chi(\theta_{r_n}))$ is  given by the law of a sum $N+W$ of independent, tight, Borel measurable random elements in $\mathbb B$ such that
  \begin{equation*}
    b^\star N\sim\mathcal N\big(0,\|\tilde\psi_{\theta,b^\star}\|_H^2\big)=\mathcal N\big(0,\|(A_\theta^\star)^\dagger\tilde\chi_{\theta,b^\star}\|_H^2\big).
  \end{equation*}
\end{theorem}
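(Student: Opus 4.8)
The plan is to reduce everything to the $\R$-valued convolution theorem, Theorem~\ref{thmScore} with $d=1$, applied to the one-dimensional functionals $b^\star\circ\chi$ for $b^\star\in\mathbb B^\star$, and then to reassemble the derivative $\dot\psi_\theta$ and the limit law by functional-analytic arguments (Hahn--Banach, the closed graph theorem) together with the Hájek--Le Cam convolution theorem in Banach spaces. For the ``only if'' direction, if $\psi_n$ is regular with continuous linear derivative $\dot\psi_\theta:H\to\mathbb B$, then applying $b^\star$ to \eqref{eqDiffPsi} shows that $b^\star\circ\psi_n$ is regular with derivative $b^\star\circ\dot\psi_\theta$; since the gradient of $b^\star\circ\chi$ is $\tilde\chi_{\theta,b^\star}$, Theorem~\ref{thmScore} forces $\tilde\chi_{\theta,b^\star}\in\ran A_\theta^\star$ and identifies the efficient influence function as $(A_\theta^\star)^\dagger\tilde\chi_{\theta,b^\star}$.

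For the ``if'' direction, assume \eqref{eqRanBanach} and set $Tb^\star:=(A_\theta^\star)^\dagger\tilde\chi_{\theta,b^\star}\in\overline\ran A_\theta$, so that $A_\theta^\star Tb^\star=\tilde\chi_{\theta,b^\star}$ by \eqref{eqMoorePenrose}. First, $b^\star\mapsto\tilde\chi_{\theta,b^\star}$ is a bounded linear map $\mathbb B^\star\to\overline\lin\dot\Theta_\theta$, since continuity of $\dot\chi_\theta$ gives $|\scapro{\tilde\chi_{\theta,b^\star}}{b}_\theta|=|b^\star\dot\chi_\theta b|\le\|b^\star\|\,\|\dot\chi_\theta\|\,\|b\|_\theta$. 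Next, $T$ has closed graph: if $b^\star_n\to b^\star$ and $Tb^\star_n\to g$ in $H$, then $A_\theta^\star Tb^\star_n=\tilde\chi_{\theta,b^\star_n}\to\tilde\chi_{\theta,b^\star}$, while continuity of $A_\theta^\star$ gives $A_\theta^\star Tb^\star_n\to A_\theta^\star g$, hence $A_\theta^\star g=\tilde\chi_{\theta,b^\star}$; since $g\in\overline\ran A_\theta=(\ker A_\theta^\star)^\perp$ (a closed set), $g$ is the minimal-norm solution and $g=(A_\theta^\star)^\dagger\tilde\chi_{\theta,b^\star}=Tb^\star$. By the closed graph theorem $C_T:=\sup_{\|b^\star\|\le1}\|Tb^\star\|_H<\infty$.

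Now define $\dot\psi_\theta$ on $\ran A_\theta$ by $\dot\psi_\theta(A_\theta b):=\dot\chi_\theta b$ for $b\in\lin\dot\Theta_\theta$. From $A_\theta^\star Tb^\star=\tilde\chi_{\theta,b^\star}$ we get, for every $b^\star\in\mathbb B^\star$,
\[
  b^\star\dot\chi_\theta b=\scapro{\tilde\chi_{\theta,b^\star}}{b}_\theta=\scapro{A_\theta^\star Tb^\star}{b}_\theta=\scapro{Tb^\star}{A_\theta b}_H,
\]
so $b^\star\dot\chi_\theta b$ depends on $b$ only through $A_\theta b$; as $\mathbb B^\star$ separates the points of $\mathbb B$, $\dot\psi_\theta$ is well defined, and the same identity gives $\|\dot\chi_\theta b\|_{\mathbb B}=\sup_{\|b^\star\|\le1}|\scapro{Tb^\star}{A_\theta b}_H|\le C_T\|A_\theta b\|_H$. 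Hence $\dot\psi_\theta$ extends to a continuous linear map $\overline\ran A_\theta\to\mathbb B$ (completeness of $\mathbb B$) and then to all of $H$ by setting it zero on $(\overline\ran A_\theta)^\perp$. Since $\psi_n(P_{n,\theta_t})=\chi(\theta_t)$, pathwise differentiability of $\chi$ gives $r_n^{-1}(\psi_n(P_{n,\theta_{r_n}})-\psi_n(P_{n,\theta}))\to\dot\chi_\theta b=\dot\psi_\theta(A_\theta b)$, i.e. \eqref{eqDiffPsi} holds and $\psi_n$ is regular; the efficient influence functions are $\tilde\psi_{\theta,b^\star}=Tb^\star=(A_\theta^\star)^\dagger\tilde\chi_{\theta,b^\star}$, which lie in $\overline\ran A_\theta$ by \eqref{eqMoorePenrose}.

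Finally, the convolution statement follows exactly as in the proof of Theorem~\ref{thmScore}, now in the Banach-valued setting: take $A_\theta\dot\Theta_\theta\subset\overline\ran A_\theta$ as local parameter set, identify $\kappa_n(A_\theta b):=\psi_n(P_{n,\theta_{r_n}})=\chi(\theta_{r_n})$ with $\kappa_n(0):=\psi_n(P_{n,\theta})$, and note that by \eqref{eqLAN} the localized experiments $(P_{n,\theta_{r_n}})$ converge weakly to the Gaussian shift on $H$ with shifts in $A_\theta\dot\Theta_\theta$, while $\kappa_n$ is regular relative to this local parameter set with derivative $\dot\psi_\theta$ by the previous step. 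The convolution theorem for Banach-space-valued parameters \citep[Thm. 3.11.2]{vanderVaartWellner1996} then yields that the limit law $L$ of $r_n^{-1}(T_n-\chi(\theta_{r_n}))$ is the law of a sum $N+W$ of independent, tight, Borel measurable random elements of $\mathbb B$ with $N$ centered Gaussian and $b^\star N\sim\mathcal N(0,\|\tilde\psi_{\theta,b^\star}\|_H^2)$ for all $b^\star\in\mathbb B^\star$; as in Theorem~\ref{thmScore}, one uses that it suffices for $A_\theta\dot\Theta_\theta$ to be a subset of the Hilbert space, provided $G_n(h)\Rightarrow G(h)$ under $P_{n,\theta}$ for all $h\in\lin A_\theta\dot\Theta_\theta$. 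The hard part will be the ``if'' direction: extracting the \emph{uniform} bound $C_T<\infty$ — which is what makes $\dot\psi_\theta$ continuous, hence turns the merely pointwise condition \eqref{eqRanBanach} into genuine regularity of $\psi_n$ — from the pointwise hypothesis alone; this is precisely where the closed graph theorem is essential, exploiting that $(A_\theta^\star)^\dagger$ is a closed, though in general unbounded, operator.
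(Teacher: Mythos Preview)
Your proof is correct and follows essentially the same approach as the paper. The paper's proof reads in full: ``The proof of this Theorem is analogous to Theorem~\ref{thmScore} with an additional application of Lemma A.2 in \cite{vanderVaart1991}. We omit the details.'' Lemma~A.2 in \citet{vanderVaart1991} is precisely the closed-graph-theorem argument you spell out: it shows that the pointwise range condition \eqref{eqRanBanach} forces $b^\star\mapsto(A_\theta^\star)^\dagger\tilde\chi_{\theta,b^\star}$ to be bounded, which is exactly what you need to build the continuous $\dot\psi_\theta$. So you have supplied the omitted details rather than taken a different route.
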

The proof of this Theorem is analogous to Theorem~\ref{thmScore} with an additional application of Lemma A.2 in \cite{vanderVaart1991}. We omit the details.
\begin{remark}
  The type of regularity which we used for the parameters $\psi_n(P_{n,\theta})$ to apply the convolution theorem is quite strong because the derivative $\dot\psi_\theta$ has to be continuous with respect to the norm topology of $\mathbb B$. Necessarily, the range condition \eqref{eqRanBanach} has to hold for all $b^\star\in\mathbb B^\star$ which may fail if the dual space is large. This problem can be solved by using a weaker topology on $\mathbb B$ which is generated by a subspace $B'\subset\mathbb B^\star$ as shown by \citet[Sect. 3]{vanderVaart1988}.
\end{remark}

To show tightness of the limit distribution may be a difficult problem for inverse problems. In the i.i.d. setting the theory of smoothed empirical processes by \citet{GineNickl2008} turns out to be useful as the following example shows.
\begin{example}
  Let us consider again the estimation of the distribution function $\R\ni t\mapsto\nu((-\infty,t])$ in the deconvolution model in Section~\ref{secDecon}. On the whole real line the parameter $\chi(\nu)=(\nu((-\infty,t]))_{t\in\R}$ maps into the space of bounded functions $\mathbb B=\ell^\infty(\R)$ which is equipped with the supremums norm. Under suitable conditions \citet{soehlTrabs2012} have shown a uniform central limit theorem in $\ell^\infty(\R)$ for the canonical plug-in kernel estimator. In particular, their limit distribution is tight and the finite dimensional distributions coincide the the information bound from Theorem~\ref{thmDecon} and Remark~\ref{exDecon3}. In the L\'evy model \citet{nicklReiss2012} have proved a Donsker theorem for the estimation of the generalized distribution function in $\ell^\infty((-\infty,-\delta]\cup[\delta,\infty))$ for $\delta>0$ which proves tightness of the limit process in Theorem~\ref{corInfoBoundIA}.
\end{example}

\section{Remaining proofs}\label{secProofs}
\subsection{Proof of Lemma~\ref{lemDensity}}\label{secProofDens}
As discussed above the law of the white noise $\eps\dot W=y_{\eps,0}$ is a symmetric (zero mean) Gaussian measure on $(E,\mathscr B(E))$. Its (unique) reproducing kernel Hilbert space is $Y$ with norm $\|\cdot\|_\eps:=\eps^{-1}\|\cdot\|_Y$. To see this, note that every functional $\phi\in E^\star$ can be represented by $\phi=\scapro y\cdot_Y=\scapro {\tilde y}\cdot_\eps$ for some $y\in Y$ and $\tilde y=\eps^2y$. Then 
\[
  \phi(\eps\dot W)\sim \mathcal N(0,\|\tilde y\|_\eps^2)=\mathcal N(0,\eps^2\|y\|_Y^2).
\]
The Cameron--Martin formula \citep[Prop. 2.24]{daPratoZabczyk1992} yields that $P_{\eps,x}$ and $P_{\eps,0}$ are equivalent measures on $(E,\mathscr B(E))$ with Radon-Nikodym derivative 
\begin{align*}
  \frac{\d P_{\eps,x}}{\d P_{\eps,0}}(y_{\eps,0})=\exp\Big(\scapro{y_{\eps,0}}{K (x)}_\eps-\frac{1}{2}\|K (x)\|_\eps^2\Big)\quad P_{\eps,0}-a.s.
\end{align*}
and thus
\begin{align*}
  \frac{\d P_{\eps,0}}{\d P_{\eps,\theta}}(y_{\eps,\theta})&=\exp\Big(-\scapro{y_{\eps,\theta}}{K(\theta)}_\eps+\frac{1}{2}\|K(\theta)\|_\eps^2\Big)\quad\text{and}\\
  \frac{\d P_{\eps,x}}{\d P_{\eps,\theta}}(y_{\eps,\theta})
  &=\frac{\d P_{\eps,x}}{\d P_{\eps,0}}(y_{\eps,\theta})\frac{\d P_{\eps,0}}{\d P_{\eps,\theta}}(y_{\eps,\theta})\\
  &=\exp\Big(\scapro{\eps \dot W}{K(x)-K(\theta)}_\eps-\frac{1}{2}\|K(x)-K(\theta)\|_\eps^2\Big)\\
  &=\exp\Big(\Big\langle\dot W,\frac{K(x)-K(\theta)}{\eps}\Big\rangle_Y-\frac{1}{2}\Big\|\frac{K(x)-K(\theta)}{\eps}\Big\|_Y^2\Big)\quad P_{\eps,\theta}-a.s.\qedhere
\end{align*}
%Since $\scapro{\dot W}{\eps^{-1}K(x-\theta)}_Y=\dot W(\eps^{-1}K(x-\theta))$, the assertion follows.

\subsection{Proof of Proposition~\ref{propRegularity}}\label{secProofReg}
  Without loss of generality we assume $\Delta=1$ in this and the following subsections.

  For any $b\in\dot\Theta_\nu$ let $P_\nu\ast (b\nu)=(P_\nu\ast (b\nu))^a+(P_\nu\ast (b\nu))^\perp$ be the Lebesgue decomposition of $P_\nu\ast (b\nu)$ with respect to $P_\nu$, that is the first and second measure are absolutely continuous and singular with respect to $P_\nu$, respectively. $\frac{\d P_\nu\ast(b\nu)}{\d P_\nu}$ is then defined as the Radon--Nikodym density of $(P_\nu\ast (b\nu))^a$ with respect to $P_\nu$. Therefore, $A_\nu b=\frac{\d P_\nu\ast(b\nu)}{\d P_\nu}-\int b\d\nu$ is well-defined without further assumptions. 
  
  In a first step we will show $L^2$-differentiability of the submodels corresponding to some direction $b\in\dot\Theta_\theta\cap L^\infty(\nu)$. The extension to the whole tangent set is proved in the second step. In the last step we will see that even $P_\nu\ast(b\nu)\ll P_\nu$ holds true.

  \textit{Step 1:}
  Let $b\in L^1(\nu)\cap L^\infty(\nu)$. We will show that the associated model $[0,1)\ni t\mapsto P_{\nu_t}$ is $L^2$-differentiable at 0 with derivative $A_\nu b\in L^2_0(P_\nu)$ as defined in \eqref{eqScoreOp}, that is 
  \begin{equation}\label{eqRegularity}
    \int\Big(\frac{\d P_{\nu_t}-\d P_\nu}{t\d P_\nu}-A_\nu b\Big)^2\d P_\nu\to0\quad\text{as}\quad t\to0.    
  \end{equation}
  Note that $\frac{\d P_{\nu_t}}{\d P_\nu}\in L^2(P_\nu)$ by \eqref{eqH2dist}, which follows by a similar argument as the one following \eqref{eqL2Diff}. Applying Proposition 1.199 in \cite{witting1985}, the $L^2$-regularity \eqref{eqRegularity} implies the proposed Hellinger differentiability for all $b\in\dot\Theta_\nu\cap L^\infty(\nu)$.
  
  Defining the measure $\nu_t^c$ via the density$\frac{\d\nu_t^c}{\d\nu}=(k(tb)-1)=:f_{\nu_t^c}$, we write as a consequence of \eqref{eqDens} and Remark~27.3 in \cite{sato1999} 
  \[
    P_{\nu_t}=e^{-\nu_t^c(\R)}\sum_{k=0}^\infty \frac{1}{k!}(\nu_t^c)^{\ast k}\ast P_\nu
  \]
  %For simplicity we can assume that the L\'evy measure $\nu$ has a Lebesgue density $f_\nu$; otherwise all estimates below hold true for the Radon--Nikodym density $\frac{\d\nu_t^c}{\d\nu}$. As a consequence $\nu_t^c$ possess the (signed) density $f_{\nu_t^c}:=(k(tb)-1)f_\nu$. 
  Arranging terms
  \begin{align*}
  &\int\Big(\frac{\d P_{\nu_t}-\d P_\nu}{t\d P_\nu}-A_\nu b\Big)^2\d P_\nu\\
  =&\int\Big(\frac{\d P_{\nu_t}-\d P_\nu-t\d (P_\nu\ast(b\nu))+t\big(\int b\nu\big)\d P_\nu}{t\d P_{\nu}}\Big)^2\d P_\nu\\
  =&\int\bigg(\frac{(e^{-\nu_t^c(\R)}-1+t\int b\d\nu)\d P_\nu+\d\big(((e^{-\nu_t^c(\R)}f_{\nu_t^c}-tb)\nu)\ast P_\nu\big)}{t \d P_\nu}\\
  &\qquad\quad+\frac{e^{-\nu_t^c(\R)}\sum_{k=2}^\infty(k!)^{-1}\d\big((\nu_t^c)^{\ast k}\ast P_\nu\big)}{t \d P_\nu}\bigg)^2\d P_\nu,
  \end{align*}
  all three terms in the numerator turn out to be of order $t^2$. Note that we can dominate $|f_{\nu_t^c}|\le t|b|$ as well as $|f_{\nu_t^c}-tb|\le t^2|b^2|$ by $|k(y)-1|\le|y|$ and $|k(y)-1-y|\le y^2$. Therefore,
  \begin{align*}
    \Big|e^{-\nu_t^c(\R)}-1+t\int b\d\nu\Big|
    &=\Big|\sum_{k\ge2}\frac{(-\nu_t^c(\R))^k}{k!}-\Big(\nu_t^c(\R)-t\int b\d\nu\Big)\Big|\\
    &\le\sum_{k\ge2}\frac{t^k\|b\|_{L^1(\nu)}^k}{k!}+t^2\|b\|_{L^2(\nu)}^2\\
    &\le \big(e^{\|b\|_{L^1(\nu)}}+\|b\|_{L^2(\nu)}^2\big)t^2,\\
    \big|e^{-\nu_t^c(\R)}f_{\nu_t^c}-tb\big|
    &=\Big|\big(e^{-\nu_t^c(\R)}-1\big)f_{\nu_t^c}+f_{\nu_t^c}-tb\Big|\\
    &\le t^2\big(e^{\|b\|_{L^1(\nu)}}|b|+|b|^2\big),\\
    \Big|e^{-\nu_t^c(\R)}\sum_{k=2}^\infty\frac{(\nu_t^c)^{\ast k}}{k!}\Big|
    &\le t^2\sum_{k=2}^\infty\frac{(|b|\nu)^{\ast k}}{k!}.
  \end{align*}
  Hence, we estimate
  \begin{align}
  &\int\Big(\frac{\d P_{\nu_t}-\d P_\nu}{t\d P_\nu}-A_\nu b\Big)^2\d P_\nu\notag\\
  \le&t^2\int\bigg(\frac{(e^{\|b\|_{L^1(\nu)}}+\|b\|_{L^2(\nu)}^2)\d P_\nu+\big(e^{\|b\|_{L^1(\nu)}}|b|+|b^2|\big)\d(\nu\ast P_\nu)}{\d P_\nu}\notag\\
  &\qquad\quad+\frac{\sum_{k=2}^\infty(k!)^{-1}\d\big((|b|\nu)^{\ast k}\ast P_\nu\big)}{\d P_\nu}\bigg)^2\d P_\nu\notag\\
  \le&t^2\big(e^{\|b\|_{L^1(\nu)}}+\|b\|_{L^2(\nu)}^2\big)^2\int\bigg(\frac{\d\Big(\sum_{k=0}^\infty((|b|+|b|^2)\nu)^{\ast k}/(k!)\Big)\ast P_\nu}{\d P_\nu}\bigg)^2\d P_\nu.\notag
  \end{align}
  Introducing an infinite divisible distribution $\mu$ without diffusion component, without drift and with finite jump measure $(|b|+|b|^2)\nu$, the previous line can be written as
  \begin{align}
  &t^2\underbrace{\big(e^{\|b\|_{L^1(\nu)}}+\|b\|_{L^2(\nu)}^2\big)^2e^{2\|b\|_{L^1(\nu)}+2\|b\|_{L^2(\nu)}^2}}_{=:C(\|b\|_{L^1(\nu)},\|b\|_{L^2(\nu)})}\int\left(\frac{\d(\mu\ast P_\nu)}{\d P_\nu}\right)^2\d P_\nu\label{eqL2Diff}
  \end{align}
  Therefore, the assertion holds true provided the Hellinger integral $H_2(\mu\ast P_\nu,P_\nu)=\int\left(\d(\mu\ast P_\nu)/\d P_\nu\right)^2\d P_\nu$ is finite. To show this, we apply the bound of Renyi's distance $R_2$ for infinite divisible distributions by \citet[Thm. 2.6]{Liese1987}. Using that both distributions, $\mu\ast P_\nu$ and $P_\nu$, have the same drift and have finite variation, we obtain (for $\alpha=2$)
  \begin{equation*}
    \tfrac{1}{2}\log H_2(\mu\ast P_\nu,P_\nu)=R_2(\mu\ast P_\nu,P_\nu)\le\tfrac{1}{2}\chi^2\big((|b|+b^2+1)\nu,\nu\big)
  \end{equation*}
  where the $\chi^2$-distance of the jump measures satisfies
  \begin{align*}
    \chi^2\big((|b|+b^2+1)\nu,\nu\big):=&\frac{1}{2}\int\left( \frac{\d((|b|+b^2+1)\nu)}{\d\nu}-1\right)^2\d \nu\\
    =&\frac{1}{2}\int\left(|b|+b^2\right)^2\d\nu
    \le\frac{1}{2}\big(1+\|b\|_{L^\infty(\nu)}^2\big)\|b\|_{L^2(\nu)}^2<\infty.
  \end{align*}
  The combination with the bound \eqref{eqL2Diff} yields
  \[
    \int\Big(\frac{\d P_{\nu_t}-\d P_\nu}{t\d P_\nu}-A_\nu b\Big)^2\d P_\nu
    \le t^2C\big(\|b\|_{L^1(\nu)},\|b\|_{L^2(\nu)}\big)e^{\frac{1}{2}(1+\|b\|_{L^\infty(\nu)}^2)\|b\|_{L^2(\nu)}^2}.
  \]
  As $t\to0$ this upper bound converges to zero which shows the $L^2$-differentiability. We conclude $\int A_\nu b\d P_\nu=0$ for all $b\in\dot\Theta_\nu\cap L^\infty(\nu)$.
  
  \textit{Step 2:}
  To show continuity of $A_\nu|_{L^1(\nu)\cap L^\infty(\nu)}$, let $\eps>0$ and $b\in L^1(\nu)\cap L^\infty(\nu)$ with $\|b\|_{L^2(\nu)}^2<\eps$. By \eqref{eqRegularity}, $\frac{1}{2}A_\nu b$ is the $L^2$-limit of $t^{-1}(\sqrt{\d P_{\nu_t}}-\sqrt{\d P_\nu})$ and thus for $t$ small enough
  \[
    \|A_\nu b\|_{L^2(P_\nu)}^2\le \frac{2}{t^2} \int\big(\sqrt{\d P_{\nu_t}}-\sqrt{\d P_\nu}\big)^2 +\eps.
  \]
  As above Theorem 2.6 in \cite{Liese1987} for $\alpha=1/2$ yields the estimate for the Hellinger distance of the infinite divisible distributions
  \begin{align}
    \int\big(\sqrt{\d P_{\nu_t}}-\sqrt{\d P_\nu}\big)^2
    &\le2\Big(1-\exp\Big(-2\int(\sqrt{\d \nu_t}-\sqrt{\d \nu})^2\Big)\Big)\label{eqHellingDist}\\
    &=2\Big(1-\exp\Big(-2\int(\sqrt{k(tb)}-1)^2\d\nu\Big)\Big).\notag
  \end{align}
  Since $|\sqrt{k(y)}-1|\le|(\sqrt{k(y)}+1)(\sqrt{k(y)}-1)|=|k(y)-1|\le|y|$ and $1-e^{-y}\le|y|$ for $y\in\R$, the previous display can be bounded by
  \[
    2\Big(1-\exp\Big(-2\int(tb)^2\d\nu\Big)\Big)
    \le4t^2\|b\|_{L^2(\nu)}^2.
  \]
  Because $\eps>0$ was arbitrary, we conclude $\|A_\nu b\|_{L^2(P_\nu)}\lesssim\|b\|_{L^2(\nu)}$ which is equivalent to the continuity of the linear operator $A_\nu|_{L^1(\nu)\cap L^\infty(\nu)}$. Since $L^1(\nu)\cap L^\infty(\nu)$ is dense in $\dot\Theta_\nu$, there is a unique continuous extension $A_\nu$ on $\dot\Theta_\nu$ satisfying $A_\nu b=\frac{\d P_\nu\ast(b\nu)-\int b\nu\d P_\nu}{\d P_\nu}$ for all $b\in L^1(\nu)\cap L^\infty(\nu)$. 
  
  Now, for any $b\in\dot\Theta_\nu$ with associated path $t\mapsto\d \nu_t=k(tb)\d \nu$ and for any positive null sequence $(t_m)_{m\in\N}$ and let $\tilde b_m\in L^1(\nu)\cap L^\infty(\nu)$ with path $t\mapsto \d \tilde \nu_t:=k(t\tilde b_m)\d\nu$ such that $\|b-\tilde b_m\|_{L^2(\nu)}\to 0$ and $\|\tilde b_m\|_{L^\infty(\nu)}=o(|\log t_m|^{1/2})$ as $m\to\infty$. Then
  \begin{align}
   &t_m^{-2}\int\bigg(\sqrt{\frac{\d P_{\nu_{t_m}}}{\d P_\nu}}-1-\frac{t_m}{2}A_\nu b\bigg)^2\d P_\nu\notag\\
   \le&\frac{3}{t_m^2}\int\Big(\sqrt{\frac{\d P_{\nu_{t_m}}}{\d P_\nu}}-\sqrt{\frac{\d P_{\tilde\nu_{t_m}}}{\d P_\nu}}\Big)^2\d P_\nu
   +\frac{3}{t_m^2}\int\Big(\sqrt{\frac{\d P_{\tilde\nu_{t_m}}}{\d P_\nu}}-1-\frac{t_m}{2}A_\nu \tilde b_m\Big)^2\d P_\nu\notag\\
   &\qquad+\frac{3}{4}\int\Big(A_\nu \tilde b_m-A_\nu b\Big)^2\d P_\nu\label{eqLANdecomp}
  \end{align}
  The frist term is the Hellinger distance between $P_{\nu_t}$ and $P_{\tilde\nu_t}$ which can be bounded as in \eqref{eqHellingDist}
  \begin{align*}
    &t_m^{-2}\int\bigg(\sqrt{\frac{\d P_{\nu_{t_m}}}{\d P_\nu}}-\sqrt{\frac{\d P_{\tilde\nu_{t_m}}}{\d P_\nu}}\bigg)^2\d P_\nu\\
    &\qquad\le2t_m^{-2}\Big(1-\exp\Big(-2\int\big(\sqrt{\d\nu_{t_m}}-\sqrt{\d \tilde\nu_{t_m}}\big)^2\Big)\Big)\\
    &\qquad=2t_m^{-2}\Big(1-\exp\Big(-2\int\Big(\sqrt{k(t_mb(x))}-\sqrt{k(t_m\tilde b_m(x))}\Big)^2\nu(\d x)\Big)\Big).
  \end{align*}
  An easy calculation shows $|(\sqrt{k})'(x)|\le1$ for all $x\in\R$ and thus the above display can be bounded by the mean value theorem
  \begin{align*}
    2t_m^{-2}\Big(1-\exp\big(-2t_m^2\|b-\tilde b_m\|_{L^2(\nu)}^2\big)\Big)
    \le4\|b-\tilde b_m\|_{L^2(\nu)}^2\to0.
  \end{align*}
  The second term in \eqref{eqLANdecomp} converges to zero according to Step 1 provided $\|\tilde b\|_{L^\infty(\nu)}=o(|\log t|^{1/2})$. Applying continuity of $A_\nu$, the third term in \eqref{eqLANdecomp} vanishes as well. Therefore, we have shown that $A_\nu b$ is the Hellinger derivative of $P_{\nu_t}$ for all $b\in\dot\Theta_\nu$. 
  
  \textit{Step 3:}
  Finally, we will show $P_\nu\ast(b\nu)\ll P_\nu$ for all $b\in\dot\Theta_\nu\cap L^\infty(\nu)$. By construction $|b|\in\dot\Theta_\nu\cap L^\infty(\nu)$, too. Let $P_\nu\ast (|b|\nu)=(P_\nu\ast (|b|\nu))^a+(P_\nu\ast (|b|\nu))^\perp$ be Lebesgue's decomposition with respect to $P_\nu$ where both measures can be chosen to be nonnegative and finite. According to Step 1, $\int A_\nu |b|\d P_\nu=0$ which yields together with the nonnegativity of the measures and Fubini's theorem
  \[
    \int |b|\d\nu=\int\frac{\d (P_\nu\ast(|b|\nu))}{\d P_\nu}\d P_\nu
    =\int\d(P_\nu\ast (|b|\nu))^a
    \le\int\d P_\nu\ast(|b|\nu)
    =\int|b|\d\nu.
  \]
  We conclude $(P_\nu\ast (|b|\nu))^a=P_\nu\ast(|b|\nu)$ or equivalently $P_\nu\ast (|b|\nu)\ll P_\nu$. Now for any event $A\in\mathscr B(\R)$ with $P_\nu\ast(|b|\nu)(A)=0$ we have
  \begin{align*}
    |P_\nu\ast(b\nu)(A)|&=\Big|\int_{\R^2}\ind_A(x+y)b(x)\nu(\d x)P_\nu(\d y)\Big|\\
    &\le\int_{\R^2}\ind_A(x+y)|b(x)|\nu(\d x)P_\nu(\d y)
    =P_\nu\ast(|b|\nu)(A)=0.
  \end{align*}
  Consequently, $P_\nu\ast(b\nu)\ll P_\nu\ast(|b|\nu)\ll P_\nu$.
\qed

\subsection{Proof of Lemma~\ref{lemCPadjoint}}\label{secProofCPajoint}
  First, we show $\nu^{\ast l}\ast P_\nu\ll P_\nu$ for any $l\in\N$. Let $A\in\mathscr B(\R)$ satisfy $P_\nu(A)=0$. \eqref{eqConExp} yields
  \[
    0=e^{-\Delta\lambda}\sum_{k=0}^\infty\frac{\Delta^k}{k!}\int\ind_A(x+\Delta\gamma)\d\nu^{\ast k}(\d x)
  \]
  and thus $\int\ind_A(x+\Delta\gamma)\d\nu^{\ast k}(\d x)=0$ for all $k\in\N$. But this implies by linearity of the convolution that $\nu^{\ast l}\ast P_\nu(A)=0$.

  To see that $A_\nu^\star$ is well defined on equivalence classes with respect to $P_\nu$ zero sets, note that $\nu\ast P_\nu\ll P_\nu$ implies that for any function $g$ with $g(x)=0$ for $P_\nu$-a.e. $x\in\R$ it holds $P_\nu(-\cdot)\ast g(y)=0$ for $\nu$-a.e. $y\in\R$. It remains to show $A_\nu^\star g\in\mathcal H$ for $g\in \mathcal G$. For any $g\in L^\infty(P_\nu)$ there is a set $A\in\mathscr B(\R)$ with $P_\nu(A)=0$ such that $g(y)\le C$ for some constant $C>0$ and for all $y\notin A$. Using
  \[
    0=\nu^{\ast l}\ast P_\nu(A)=\int\int\ind_{A-\{x\}}(y)P_\nu(\d y)\nu^{\ast l}(\d x)
  \]
  we infer $P_\nu(A-\{x\})=0$ for $\nu^{\ast l}$-a.e. $x\in\R$ and therefore $P_\nu(-\cdot)\ast g(y)=\int g(x+y)P_\nu(\d x)$ is bounded by $C$ for $\nu^{\ast l}$-a.e. $y\in\R$. Hence, $\|P_\nu(-\cdot)\ast g\|_{L^\infty(\nu^{\ast l})}\le C$ for any $l\in\N$.
  \qed

\subsection{Proof of Lemma~\ref{lemAdjoint}}\label{secProofAdjoint}
  \textit{(i)} We will determine the adjoint score operator and its inverse on the subsets $\mathcal G$ and $\mathcal H$ as defined in \eqref{eqSubsetsIA}. In the case of infinite jump activity the application of Fubini's theorem in \eqref{eqFubini} holds as well. Hence, the adjoint score operator on $\mathcal G$ is given by $A_\nu^\star g=P_\nu(-\cdot)\ast g$. To verify that $A_\nu^\star|_\mathcal G$ is well-defined, we note first that by the Sobolev embedding any $g\in\mathcal G$ has a version in $C^1(\R)$. Throughout we can identify $g$ with this smooth version. Then, we obtain  $A_\nu^\star g(0)=\int g\d P_\nu=0$ and $\|(A_\nu^\star g)^{(l)}\|_\infty=\|P_\nu(-\cdot)\ast(g^{(l)})\|_\infty\le\|g^{(l)}\|_\infty\le\|g\|_{C^1}$ for $l=0,1$. Hence, $A_\nu^\star g$ is a bounded function and
  \begin{align*}
    \int|A_\nu^\star g(x)|\nu(x)\d x&\le\int\big(\|A_\nu^\star g\|_\infty\wedge(|x|\|(A_\nu^\star g)'\|_\infty)\big)\d\nu(x)\\
    &\le\|g\|_{C^1}\int(1\wedge|x|)\d\nu(x).
  \end{align*}
  A similar estimate holds for $L^2(\nu)$. Therefore, $A_\nu^\star g\in L^1(\nu)\cap L^\infty(\R)\subset\dot\Theta_\nu$. Owing to $\|\phi_\nu\|_\infty\le1$, it holds for any $s>0$
  \[
    \big\|(1+|u|^2)^{s/2}\F[A_\nu^\star g](u)\big\|_{L^2}
    =\big\|(1+|u|^2)^{s/2}\phi_\nu(-u)\F g(u)\big\|_{L^2}
    \le\|g\|_{H^s}<\infty.
  \]
  We conclude $\ran A_\nu^\star|_{\mathcal G}\subset\mathcal H$. 

  Let us show now that the inverse adjoint score operator as given in \eqref{eqAdjointOpInv} is well-defined on $\mathcal H$. Applying the assumption $|\phi_\nu(u)|\gtrsim(1+|u|)^{-\beta}$, we obtain for all $b\in\mathcal H$ and $s>0$
  \begin{align*}
    \big\|(1+|u|^2)^{s/2}\F[(A_\nu^\star)^{-1} b](u)\big\|_{L^2}
    &=\big\|(1+|u|^2)^{s/2}\F b(u)/\phi_\nu(-u)\big\|_{L^2}\\
    &\lesssim\|(1+|u|^2)^{(s+\beta)/2}\F b(u)\big\|_{L^2}\\
    &\le\|b\|_{H^{s+\beta}}<\infty.
  \end{align*}
  Therefore, $(A_\nu^\star)^{-1}b\in H^\infty(\R)$ and the Sobolev embedding yields $\|(A_\nu^\star)^{-1}b\|_{L^2(P_\nu)}\le\|(A_\nu^\star)^{-1}b\|_\infty\le\|(A_\nu^\star)^{-1}b\|_{C^s}<\infty$. It remains to verify the condition $\int(A_\nu^\star)^{-1}b\d P_\nu=0$. By construction
  \begin{align*}
    \int (A_\nu^\star)^{-1}b\d P_\nu
    &=\big(((A_\nu^\star)^{-1}b)\ast P_\nu\big)(0)\\
    &=\big(\F^{-1}\Big[\F b/\phi_\nu(-\cdot)\Big]\ast P_\nu\big)(0)
    = b(0)
  \end{align*}
  where the last equality is clear in distributional sense and can be shown via integration against test functions. Since $b(0)=0$ for all $b\in\mathcal H$, we conclude $\ran (A_\nu^\star)^{-1}|_{\mathcal H}\subset\mathcal G$. 

  By construction $g=(A_\nu^\star)^{-1}A_\nu^\star g$ and $b=A_\nu^\star(A_\nu^\star)^{-1}b$ for all $g\in\mathcal G,b\in\mathcal H$ which proves that $A_\nu^\star|_\mathcal G$ is a bijection from $\mathcal G$ onto $\mathcal H$.
  
  \textit{(ii)} Let us show that $\mathcal G$ is dense in $L_0^2(P_\nu)$. Since the Borel $\sigma$-field is generated by $\mathcal E:=\{[a,b]:-\infty<a<b<\infty\}$, the set of indicator functions $\{\mathbbm 1_E:E\in\mathcal E\}$ is dense in $L^2(P_\nu)$. Hence, it suffices to approximate in $L^2(P_\nu)$-sense the indicators $\mathbbm 1_E, E\in\mathcal E$, by smooth $L^2(\R)$-integrable functions. Let $\eps>0$ be arbitrary and let us denote the distribution function of $P_\nu$ by $F(x):=P_\nu((-\infty,x]), x\in\R$. Since $F$ is right continuous with left limits, for all $a<b$ there are $a'<a, b<b'$ such that
  \[
    P_\nu\big((a',b']\setminus[a,b]\big)=P_\nu\big((a',a)\big)+P_\nu\big((b,b']\big)=F(a-)-F(a')+F(b')-F(b)<\eps.
  \]
  Therefore, for any $A\in\mathcal E$ there is a bounded set $B\in\mathscr B(\R)$ satisfying $A\subset B$, $P_\nu(B\setminus A)<\eps$ and the distance between $x\in A$ and $\R\setminus B$ is strictly positive. Consequently, there is some non-negative $\psi\in C^\infty(\R)$ with $\psi(x)=1$ for $x\in A$, $\|\psi\|_\infty<1$ and $\supp\psi\subset B$. Obviously, $\psi$ is contained in $H^\infty(\R)\cap L^2(P_\nu)$ and $\|\mathbbm 1_A-\psi\|_{L^2(P_\nu)}<\sqrt\eps$. Since $L_0^2(P_\nu)=(\lin 1)^{\perp}$ is a closed subspace of $L^2(P_\nu)$, we conclude that $\mathcal G$ is dense in $L_0^2(P_\nu)$. 

  Continuity of $A_\nu^\star$ follows from the continuity of $A_\nu$ which was shown in Proposition~\ref{propRegularity}. Hence, $A_\nu^\star$ is uniquely given by the continuous extension of $A_\nu^\star|_\mathcal G$ to $L_0^2(P_\nu)$.
  \qed
\subsection{Proof of Proposition~\ref{propBound}}\label{secProofBound}
  Taking the derivative of the L\'evy-Khintchine formula~\eqref{eqLevyKhintchine}, we obtain
  \begin{align*}
    \phi_\nu'(u)=\phi_\nu(u)\big(i\gamma+\F[x\nu](u)\big),\quad u\in\R.
  \end{align*}
  In a first step we will show that the drift can be discarded, which was also the case for the upper bound in \cite{nicklReiss2012}. Since Lemma~\ref{lemAdjoint} shows that the inverse adjoint score operator is given by $\F^{-1}[1/\phi_\nu(-\cdot)]$ on the smooth subset $\mathcal G$, we study the mapping properties of this deconvolution operator in Step~2. Finally in Step~3, we apply the characterization in Proposition~\ref{propApprox} to prove that $Z^\beta(\R)\subset \ran A_\nu^\star$ and to determine $(A_\nu^\star)^{-1}$ on $Z^\beta(\R)$.
  
  \textit{Step 1:}
  Let us show that $\gamma=0$ can be assumed, meaning that the process $L$ has no drift. For any $\gamma\in\R$ consider the infinitely divisible distribution $\tilde P_\nu:=P_\nu\ast\delta_{-\gamma}$.
  % where $\delta_x$ denotes the Dirac measure in $x\in\R$. 
  Then the following map is an isomorphism
  \[
    \Phi:\quad L_0^2(P_\nu)\to L_0^2(\tilde P_\nu),\quad g\mapsto g(\cdot+\gamma).
  \]
  Lemma~\ref{lemAdjoint} determines the adjoint score operator $\tilde A_\nu^\star$ which corresponds to $\tilde P_\nu$. Also by Lemma~\ref{lemAdjoint} we see for $g\in\mathcal G$ that $A_\nu^\star g=P_\nu(-\cdot)\ast g=\tilde P_\nu(-\cdot)\ast g(\cdot+\gamma)$. Therefore, $A_\nu^\star=\tilde A_\nu^\star\circ\Phi$ which implies
  \[
    \ran A_\nu^\star=\ran\big(\tilde A_\nu^\star\circ\Phi\big)=\ran\tilde A_\nu^\star.
  \]
  Hence, for the rest of the proof suppose $\gamma=0$.

  \textit{Step 2:}
  The aim of this step is to show for $\zeta=\zeta^s+\zeta^c$ and any $\eps>0$
  \begin{equation}\label{eqBoundNorm}
    \|\F^{-1}[\phi_\nu^{-1}(-\cdot)\F\zeta]\|_{L^2(P_\nu)}\lesssim \big\|\zeta^s(x)\big\|_{H^{\beta}}+\big\|\tfrac{1}{x}\zeta^s(x)\big\|_{H^{\beta}}+\|\zeta^c\|_{C^{\beta+\eps}}.    
  \end{equation}
  To this end note that Assumption~\ref{assFourierMult} yields, due to $\gamma=0$,
  \[
    |\phi_\nu^{-1}(u)|\lesssim(1+|u|)^{\beta}\quad\text{and}\quad|(\phi_\nu^{-1})'(u)|\lesssim(1+|u|)^{\beta-1}
  \]
  and thus Lemma 4(c) in \cite{nicklReiss2012} or alternatively Lemma 5(i) in \cite{soehlTrabs2012} shows that for all $s\in\R, p,q\in[1,\infty]$ the linear map
  \begin{align}\label{eqFourMult}
    B_{p,q}^{s+\beta}(\R)\to B_{p,q}^{s}(\R),\quad 
    f\mapsto\F^{-1}[\phi_\nu^{-1}(-\cdot)\F f]
  \end{align}
  is bounded. This yields for any $\eps>0$ and $\zeta^c\in C^{\beta+\eps}(\R)$
  \begin{equation}\label{eqContPart}
    \|\F^{-1}[\phi_\nu^{-1}(-\cdot)\F\zeta^c]\|_{L^2(P_\nu)}\lesssim\|F^{-1}[\phi_\nu^{-1}(-\cdot)\F\zeta^c]\|_{\infty}
    \lesssim\|\zeta^c\|_{B^\beta_{\infty,1}}
    \lesssim\|\zeta^c\|_{C^{\beta+\eps}}.
  \end{equation}
  For the singular part we apply a similar decomposition in \cite{nicklReiss2012}. Integration by parts yields
  \begin{align}
    \F^{-1}\Big[\frac{\F\zeta^s}{\phi_\nu(-\cdot)}\Big]
    &=\F^{-1}\Big[\frac{(\F[\frac{1}{ix}\zeta^s(x)])'}{\phi_\nu(-\cdot)}\Big]\notag\\
    &=ix\F^{-1}\Big[\frac{\F[\frac{1}{ix}\zeta^s(x)]}{\phi_\nu(-\cdot)}\Big]+\F^{-1}\Big[\F\big[\tfrac{1}{ix}\zeta^s(x)\big](\phi_\nu^{-1})'(-\cdot)\Big]\label{eqDecomp}
  \end{align}
  Note that $1/\phi_\nu$ is a Fourier multiplier from $H^\beta$ into $H^{0}=L^2(\R)$ on the assumption $|\phi_\nu(u)|\gtrsim(1+|u|)^{-\beta}$. Similarly, $(\phi_\nu^{-1})'$ is a Fourier multiplier from $H^\beta$ into $H^{1}$. Hence, for $\frac{1}{i\cdot}\zeta^s\in H^\beta$
  \begin{align*}
    \F^{-1}\Big[\frac{\F[\frac{1}{ix}\zeta^s(x)]}{\phi_\nu(-\cdot)}\Big]
    &\in L^2(\R),\\
    \F^{-1}\Big[\F\big[\tfrac{1}{ix}\zeta^s(x)\big](\phi_\nu^{-1})'(-\cdot)\Big]
    &\in H^{1}(\R)\subset C^0(\R),
  \end{align*}
  where the last inclusion holds by the Sobolev embedding. Moreover, 
  \begin{align*}
    ix\F^{-1}\Big[\frac{\F[\frac{1}{ix}\zeta^s(x)]}{\phi_\nu(-\cdot)}\Big]
    &=\F^{-1}\Big[\frac{\F[\zeta^s(x)]}{\phi_\nu(-\cdot)}\Big]-\F^{-1}\Big[\F\big[\tfrac{1}{ix}\zeta^s(x)\big](\phi_\nu^{-1})'(-\cdot)\Big]
  \end{align*}
  which is an $L^2(\R)$-function. Applying Lemma 4(a) from \cite{nicklReiss2012}, the distribution $xP_\nu(\d x)$ has a bounded Lebesgue density and which yields together with $|x|^2\lesssim|x||1+ix|^2$ and the continuous embeddings above
  \begin{align*}
    \|\F^{-1}[\phi_\nu^{-1}(-\cdot)\F\zeta^s]\|_{L^2(P_\nu)}
    \le&\int_{\R}\Big|ix\F^{-1}\Big[\frac{\F[\frac{1}{ix}\zeta^s(x)]}{\phi_\nu(-\cdot)}\Big](x)\Big|^2\d P_\nu(x)\\
    &+\int_{\R}\Big|\F^{-1}\Big[\F\big[\tfrac{1}{ix}\zeta^s(x)\big](\phi_\nu^{-1})'(-\cdot)\Big](x)\Big|^2\d P_\nu(x)\\
    \lesssim&\Big\|(1+ix)\F^{-1}\Big[\frac{\F[\frac{1}{ix}\zeta^s(x)]}{\phi_\nu(-\cdot)}\Big](x)\Big\|_{L^2}\\
    &+\Big\|\F^{-1}\Big[\F\big[\tfrac{1}{ix}\zeta^s(x)\big](\phi_\nu^{-1})'(-\cdot)\Big](x)\Big\|_\infty\\
    \lesssim&\big\|\tfrac{1}{x}\zeta^s(x)\big\|_{H^{\beta}}+\big\|\zeta^s\big\|_{H^{\beta}}.
  \end{align*}
  Combining with \eqref{eqContPart}, we get \eqref{eqBoundNorm}.

  \textit{Step 3:}
  Define the sets
  \begin{align*}
    \mathcal G':= C^\infty(\R)\cap L_0^2(P_\nu) \quad\text{and}\quad
    \mathcal H':= \big\{b\in C^\infty(\R)|b(0)=0\big\}\cap \dot\Theta_\nu
  \end{align*}
  which are larger than $\mathcal G$ an $\mathcal H$ from above. Using the Fourier multiplier property on Besov spaces \eqref{eqFourMult} and an analogous result for the Fourier multiplier $\phi_\nu(-\cdot)$, we obtain 
  \[
      \|P_\nu(-\cdot)\ast f\|_{C^{s'}}\lesssim\|f\|_{C^{s'}}\quad\text{and}\quad
      \|F^{-1}[\phi_\nu^{-1}(-\cdot)\F f]\|_{C^{s}}\lesssim\|f\|_{C^{s'}}
  \]
  for any $s>0$ and $f\in C^{s'}$ for $s'>s+\beta$. Therefore, following the lines of the proof of Lemma~\ref{lemAdjoint}(i), we see that $A_\nu^\star|_{\mathcal G'}$ is given by $A_\nu^\star g=P_\nu(-\cdot)\ast g$ for $g\in\mathcal G'$ and that $A_\nu^\star|_{\mathcal G'}$ is a bijection from $\mathcal G'$ onto $\mathcal H'$ with inverse $(A_\nu^\star|_{\mathcal G'})^{-1} b=F^{-1}[\phi_\nu^{-1}(-\cdot)\F b]$ for $b\in\mathcal H'$.

  By Proposition~\ref{propApprox} for any $\zeta$ a necessary and sufficient condition to be in the range of $A_\nu^\star$ is the existence of a sequence $(\chi_m)_{m\in\N}\subset\mathcal H'$ such that $\chi_m\to\zeta$ in $L^2(\nu)$ and $(A_\nu^\star)^{-1}\chi_m$ converges in $L^2(P_\nu)$. Now, for any $\zeta\in Z^\beta\cap L^1(\nu)\cap L^2(\nu)$ we find $\chi_m=\chi_m^s+\chi_m^c$ with $\chi_n^s,\chi_n^c\in\mathcal H'$ satisfying $\chi_m^s\to\zeta^s$ and $\chi_m^c\to\zeta^c$ in $L^2(\nu)$ as well as
  \begin{align*}
    \Big\|\F^{-1}\Big[\frac{\F[\zeta-\chi_n]}{\phi_\nu(-\cdot)}\Big]\Big\|_{L^2(P_\nu)}\lesssim& \big\|(\zeta^s-\chi_n^s)(x)\big\|_{H^{\beta}}\\
      &+\big\|\tfrac1x(\zeta^s-\chi_n^s)(x)\big\|_{H^{\beta}}+\|\zeta^c-\chi^c_n\|_{C^{\beta+\eps}}\to0
  \end{align*}
  for $m\to\infty$ owing to \eqref{eqBoundNorm}. Hence, $\zeta\in\ran A_\nu^\star$ with $(A_\nu^\star)^{-1}\zeta=\F^{-1}[\phi_\nu^{-1}(-\cdot)\F\zeta]=\F^{-1}[\phi_\nu^{-1}(-\cdot)]\ast\zeta$.
  \qed

\iffalse
For instance, $\ind_{(-\infty,t]}$ for some $t<0$ can be approximated as follows: For any $\eps>0$ there is a constant $C>0$ such that $\nu(\R\setminus[-C,C]))<\eps/8$, due to the tightness of the measure $\nu$, and there is some $\delta\in(0,|t|)$ satisfying $\nu((t,t+\delta])<\eps/2$. Now, choose $b\in C^\infty(\R)$ such that $b(x)=1$ for $x\in[-C,t]$, $b(x)=0$ for $x\notin[-C-\delta,t+\delta]$ and $\|b\|_\infty<1$. By the boundedness and the compact support $b\in H^s(\R)$ and $b$ equals zero in a neighborhood of zero which yields $b\in\dot\Theta_\nu$. Finally,
\[
  \|b-\mathbbm 1_{(-\infty,-t]}\|^2_{L^2(\nu)}\int_{\R}(b-\mathbbm 1_{(-\infty,-t]})^2\d\nu
  \le4\nu((-\infty,-C))+\nu((t,t+\delta])<\eps.
\]
\fi

\bibliography{bib} % setzt "bib.bib" als Standardbibliothek

\end{document}